\tikzset{font=\small}
\newtheorem{theorem}{Theorem} [section]
\newtheorem{lemma}[theorem]{Lemma}
\newtheorem{corollary}[theorem]{Corollary}
\newtheorem{proposition}[theorem]{Proposition}
\theoremstyle{definition}
\newtheorem{definition}[theorem]{Definition}
\newtheorem{remark}[theorem]{Remark}
\newcommand{\mx}[1]{#1^{\mathfrak{m}}}
\DeclareMathOperator{\dom}{dom}
\DeclareMathOperator{\ran}{ran}
\newcommand{\Cay}{\operatorname{Cay}}
\title[$F$-birestriction monoids in enriched signature]{$F$-birestriction monoids in enriched signature}
\subjclass[2010]{
08A05, 
08A50,  
08B20, 
20M18, 
20F65. 	
}
\keywords{birestriction monoid, $F$-birestriction monoid, free $F$-birestriction monoid, inverse monoid, $F$-inverse monoid, Margolis-Meakin expansion, 
Sch\"utzenberger graph, partial action, partial action product}
\numberwithin{equation}{section}
\begin{document}
\author{Ganna Kudryavtseva}
\address{G. Kudryavtseva: University of Ljubljana,
Faculty of Mathematics and Physics, Jadranska ulica 19, SI-1000 Ljubljana, Slovenia / Institute of Mathematics, Physics and Mechanics, Jadranska ulica 19, SI-1000 Ljubljana, Slovenia}
\email{ganna.kudryavtseva\symbol{64}fmf.uni-lj.si}
\author{Ajda Lemut Furlani}
\address{A. Lemut Furlani: Institute of Mathematics, Physics and Mechanics, Jadranska ulica 19, SI-1000 Ljubljana, Slovenia/ Faculty of Mathematics and Physics, Jadranska ulica 19, SI-1000 Ljubljana, Slovenia}
\email{ajda.lemut\symbol{64}imfm.si}

\thanks{Ganna Kudryavtseva was supported by the ARIS grants  P1-0288 and J1-60025. Ajda Lemut Furlani was supported by the ARIS grant  P1-0288.}
\sloppy

\begin{abstract}
Motivated by recent interest to $F$-inverse monoids, on the one hand, and to restriction and birestriction monoids, on the other hand, we initiate the study of $F$-birestriction monoids as algebraic structures in the enriched signature $(\cdot, \, ^*, \,^+, \,\mx{},1)$ where the unary operation $\mx{(\cdot)}$ maps each element to the maximum element of its $\sigma$-class.

We find a presentation of the free $F$-birestriction monoid 
${\mathsf{FFBR}}(X)$
as a birestriction monoid ${\mathcal F}$ over the extended set of generators $X\cup\overline{X^+}$ where $\overline{X^+}$ is a set in a bijection with the free semigroup $X^+$ and encodes the maximum elements of (non-projection) $\sigma$-classes. This enables us to show that ${\mathsf{FFBR}}(X)$ decomposes as the partial action product \mbox{$E({\mathcal I})\rtimes X^*$} of the idempotent semilattice of the universal inverse monoid ${\mathcal I}$ of ${\mathcal F}$ partially acted upon by the free monoid $X^*$. Invoking Sch\"utzenberger graphs, we prove that the word problem for ${\mathsf{FFBR}}(X)$ and its strong and perfect analogues is decidable. Furthermore, we show that ${\mathsf{FFBR}}(X)$ does not admit a geometric model based on a quotient of the Margolis-Meakin expansion $M({\mathsf{FG}}(X), X\cup \overline{X^+})$ over the free group ${\mathsf{FG}}(X)$, but the free perfect $X$-generated $F$-birestriction monoid admits such a model.
\end{abstract}
\maketitle
\section{Introduction}
Restriction and birestriction semigroups\footnote{Restriction and birestriction semigroups appeared in the literature as {\em right restriction semigroups} and {\em two-sided restriction semigroups}, respectively. Here, just as in \cite{K24}, we opted to change the terminology and used what is in our opinion more compact and logical terminology, which is in addition consistent with that used in the category theory literature, see, e.g., bisupport categories of \cite{CGH12}.} and monoids  are perhaps the most far-reaching and well studied non-regular generalizations of inverse semigroups. They arise naturally and are useful far beyond semigroup theory, for example, in theoretical computer science and in operator algebras, see \cite{CM24,CG21,CL02,G23b,KL17}. 
$F$-birestriction monoids\footnote{$F$-birestriction monoids were called $F$-restriction in \cite{J16, Kud15}.} form a subclass of birestriction monoids and generalize $F$-inverse monoids, which are especially useful in the theory of $C^*$-algebras \cite{KS02,LN16,Starling16,S11} and, due to recent advances such as the solution of the finite $F$-inverse cover problem \cite{ABO22} and the development of geometric methods \cite{AKSz21, KLF24, Sz24}, have gained major importance within the inverse semigroup theory.

$F$-birestriction monoids were introduced independently by Jones \cite{J16} and the first-named author \cite{Kud15} and are birestriction monoids (for the introduction to the latter, see Section~\ref{sec:prelim}) such that every $\sigma$-class (where $\sigma$ is the minimum congruence which identifies all the projections) has a maximum element with respect to the natural partial order.
It was shown by Kinyon \cite{K18} that $F$-inverse monoids in the enriched signature $(\cdot, \,^{-1}, \mx{},1)$ form a variety of algebras \cite[Corollary 3.2]{AKSz21} and we observe that so do
$F$-birestriction monoids in the enriched signature $(\cdot, \, ^*,\, ^+,\,\mx{},1)$, see Section \ref{sec:varieties}. This raises the question to determine the structure of the free $F$-birestriction monoid ${\mathsf{FFBR}}(X)$ and to solve the word problem for it. 
Since both the free $F$-inverse monoid ${\mathsf{FFI}}(X)$ and the free birestriction monoid ${\mathsf{FBR}}(X)$ admit geometric models based on the Cayley graph of the free group ${\mathsf{FG}}(X)$ (see \cite{AKSz21,FGG09,Kud19,KLF24}), it
is natural to wonder if ${\mathsf{FFBR}}(X)$ admits a similar model. The aim of the present paper is to answer these and some other related questions.

The first-named author showed in~\cite{Kud19} that certain universal proper birestriction monoids (which are analogues of the Birget-Rhodes prefix group expansion \cite{BR89,Sz89}) have sufficient amount of left-right symmetry in that such a monoid $S$ is isomorphic to a partial action product of the idempotent semilattice of the universal inverse monoid of $S$ partially acted upon (by order isomorphisms between order ideals) by the monoid $S/\sigma$. 
 To be able to extend the methods from \cite{Kud19} to the study of $F$-birestriction monoids, we first find, adapting the idea from \cite{KLF24}, the birestriction monoid presentations  of the free $X$-generated $F$-birestriction monoid and its left strong, right strong, strong and perfect analogues.
We prove in particular that ${\mathsf{FFBR}}(X)$ is isomorphic to the birestriction monoid 
$$
{\mathcal F} = {\mathrm{BRestr}} \langle X\cup \overline{X^+} \mathrel{\vert} \overline{x}\geq x, \, x\in X, \, \overline{uv}\geq \overline{u}\,\overline{v},\, u,v\in X^+\rangle
$$
over the extended set of generators $X\cup\overline{X^+}$, where $\overline{X^+}$ is a bijective copy of the free semigroup $X^+$ and encodes the maximum elements of (non-projection) $\sigma$-classes (see Theorem~\ref{th:isom}).
Since $F$-birestriction monoids are proper (\cite[Lemma 5]{Kud15}), the structure result for proper birestriction semigroups (which is recalled in Theorem~\ref{thm:proper}) implies that ${\mathcal F}$ decomposes as a partial action product of its projection semilattice $P({\mathcal F})$, partially acted upon by the monoid $X^*$ (see Theorem~\ref{th:main1}). Building on the methods of \cite{Kud19}, we further show  that $P(\mathcal F)$ is isomorphic to the idempotent semilattice of the universal inverse monoid ${\mathcal I}$ of ${\mathcal F}$. This enables us to invoke the technique developed by Stephen
\cite{S90}, which relies on Sch\"utzenberger graphs, and prove that the word problem for ${\mathcal I}$ and then also for ${\mathsf{FFBR}}(X)$ is decidable (see Theorem \ref{th:word}). We also prove similar results for the free (left, right) strong and perfect $X$-generated $F$-birestriction monoids (see Theorems \ref{th:main1aa} and \ref{thm:word_rel}).

Furthermore, we show that ${\mathsf{FFBR}}(X)$ can not be $(X\cup \overline{X^+})$-canonically embedded, as a birestriction monoid, into an $E$-unitary inverse monoid, which is a quotient of the Margolis-Meakin expansion $M({\mathsf{FG}}(X), X\cup \overline{X^+})$ over the free group ${\mathsf{FG}}(X)$ (see Theorem \ref{th:nogeom} and Proposition \ref{prop:nogeom1}). The free strong $F$-birestriction monoid ${\mathsf{FFBR}}_s(X)$ admits such an embedding if and only if $|X|=1$ (see Theorems \ref{th:nogeom} and  \ref{th:geom1}). Lastly, Theorem \ref{th:geom_p} shows that for the free perfect $F$-birestriction monoid ${\mathsf{FFBR}}_p(X)$ such an embedding exists for any (non-empty) set $X$.

The paper is organized as follows. 
In Section \ref{sec:prelim} we collect the necessary preliminaries. Section \ref{sec:structureFR} provides a self-contained proof of the structure result for the free birestriction monoid ${\mathsf{FBR}}(X)$ \cite{FGG09}, based on the methods of \cite{Kud19}.
This proof is used in Section \ref{sec:projections} to show that  the projection semilattice of a birestriction monoid $S$ given by a presentation is isomorphic to the idempotent semilattice of its universal inverse monoid. In Section \ref{s:geom_models} we study quotients of the Margolis-Meakin expansion $M(G,X)$ of an $X$-generated group $G$, whose maximum group quotient is $G$, via dual-closure operators on the semilattice of connected subgraphs of the Cayley graph of $G$ which contain the origin. We further define and study the notion of a birestriction monoid which has a geometric model based on an $E$-unitary quotient of $M(G,X)$ over $G$.
Further, in Section \ref{sec:varieties} we show that
$F$-birestriction monoids in the enriched signature $(\cdot, \, ^*, \,^+, \mx{}, 1)$ form a variety of algebras and define the varieties of left strong, right strong, strong and perfect  $F$-birestriction monoids.
In Sections~\ref{sec:structureFFR} to~\ref{sec:word_rel} we obtain a coordinatization of ${\mathsf{FFBR}}(X)$ and its  left strong, right strong, strong and perfect analogues, which is then used to show that the word problem for all these $F$-birestriction monoids is decidable.
Finally, Section \ref{sec:geom_models} treats the questions if ${\mathsf{FFBR}}(X)$ and its  left strong, right strong, strong and perfect analogues admit geometric models based on the Cayley graph of the free group ${\mathsf{FG}}(X)$ with respect to generators $X\cup \overline{X^+}$ or $X\cup \overline{X}$.

For the undefined notions in inverse semigroups we refer the reader to \cite{Lawson_book, Petrich}, in birestriction semigroups to \cite{G_notes,G12}, and in universal algebra to \cite{BS81}.

\section{Preliminaries}\label{sec:prelim}
\subsection{Birestriction semigroups}\label{subs:rest_semi}
We start from recalling the necessary background about birestriction semigroups. For more information, see the survey \cite{G_notes}.\footnote{We remind the reader that we have updated the terminology and what we call restriction, corestriction and birestriction semigroups appeared in the literature as right restriction (or weakly right ample) semigroups, left restriction  (or weakly left ample) semigroups and two-sided restriction (or weakly ample) semigroups, respectively.}
\begin{definition} (Restriction and corestriction semigroups)
{\em A restriction semigroup} is an algebra $(S;\, \cdot, \, ^*)$ of type $(2,1)$ such that $(S;\,\cdot)$ is a semigroup and and the following identities hold:
\begin{equation}\label{def:right_rest}
xx^*=x, \, x^* y^*=y^* x^*, \, (xy^*)^*=x^* y^*, \, x^* y=y(xy)^*.
\end{equation}

Dually, a {\em corestriction semigroups} is an algebra $(S;\,\cdot,\, ^+)$ of type $(2,1)$ such that $(S;\,\cdot)$ is a semigroup and 
the following identities hold:
\begin{equation}\label{def:left_rest}
x^+x=x, \, x^+y^+=y^+x^+, \, (x^+y)^+=x^+y^+, \, xy^+=(xy)^+x.
\end{equation}
\end{definition}

\begin{definition} (Birestriction semigroups)
A {\em birestriction semigroup} is an algebra $(S;\,\cdot, \,^*,\,^+)$ of type $(2,1,1)$, where $(S;\cdot, ^+)$ is a restriction semigroup, $(S;\,\cdot, \,^*)$ is a corestriction semigroup and the following identities, which connect the operations $^*$ and $^+$, hold:
\begin{equation}\label{def:rest}
(x^+)^* = x^+, \, (x^*)^+=x^*.
\end{equation}
\end{definition}
If a birestriction semigroup has an identity element, we call it a {\em birestriction monoid} and regard it as an algebra of signature $(\cdot,\, ^*,\, ^+,1)$.
Morphisms and subalgebras of such algebras will be taken with respect to this signature. To emphasize this, we sometimes refer to, e.g., morphisms of birestriction monoids as $(2,1,1,0)$-morphisms. 

Let $X$ be a set, $S$ a birestriction monoid and $\iota_S\colon X\to S$ a map called the {\em assignment map}. We say that $S$ is  {\em $X$-generated} via $\iota_S$ if $S$ is $(2,1,1,0)$-generated  by $\iota_S(X)$ (in the sense of \cite[Definition 3.4]{BS81}). 
 
Let $S$ be a birestriction semigroup. It follows from \eqref{def:rest} that the sets $S^* = \{s^*\colon s\in S\}$ and $S^+ = \{s^+\colon s\in S\}$ are equal. We denote $P(S) = S^* = S^+$ and call elements of $P(S)$ {\em projections}.
It is easy to see that $P(S)$ is a semilattice with $e \wedge f =ef$. Every projection is an idempotent, but an idempotent does not need to be a projection. Note that $e^+=e^*=e$ for all $e\in P(S)$.

Any inverse semigroup $S$ is a birestriction semigroup if one puts $x^* = x^{-1}x$ and $x^+ = xx^{-1}$. In particular, any semilattice $E$ is a birestriction semigroup with $e^*=e^+ =e$ for all $e\in E$.

A {\em reduced birestriction monoid} is a birestriction semigroup with only one projection which is necessarily the identity element. Any monoid $M$ is a reduced birestriction monoid if one puts $x^+=x^*=1$ for all $x\in M$. 

Let $S$ be a birestriction semigroup. One can easily check that the following identities hold for all $s,t\in S$ and $e\in P(S)$:
\begin{equation}\label{eq:identities}
(st)^* =(s^* t)^*, \, (st)^+=(st^+)^+,
\end{equation}
\begin{equation}\label{eq:ample}
es=s(es)^*,  \,  se=(se)^+s,
\end{equation}
\begin{equation}\label{eq:id1}
(se)^*=s^*e, \, (es)^+=es^+.
\end{equation}

The {\em natural partial order} on $S$ is defined by $s \leq t$ if and only if there exists $e \in P(S)$ such that $s=et$ or, equivalently, there exists $f \in P(S)$ such that $s=tf$. It is easy to see (and well known) that $s\leq t$ holds if and only if $s = ts^*$, which is equivalent to $s=s^+t$. Furthermore, if $s,t,u\in S$ and $s \leq t$ then $su \leq tu$, $us \leq ut$, $s^* \leq t^*$ and $s^+ \leq t^+$. We will use these facts throughout the paper without further mention.

The elements $s,t \in S$ are said to be {\em bicompatible}, denoted $s \asymp t$, provided that $st^*=ts^*$ and $t^+s=s^+t$.
The minimum congruence $\sigma$ which identifies all the projections (so that $S/\sigma$ is the maximum reduced quotient of $S$) is given by $s \mathrel{\sigma}t$ provided that $es=et$ for some $e \in P(S)$ or, equivalently, $se=te$ for some $e \in P(S)$. Equivalently, $s\mathrel{\sigma} t$ if and only if there is $u\in S$ such that $u\leq s,t$. We denote the $\sigma$-class of an element $s\in S$ by $[s]_{\sigma}$. Note that $s\asymp t$ implies $s\mathrel{\sigma} t$, for all $s,t\in S$.
 
\begin{definition}\label{def:proper} (Proper birestriction semigroups)
A birestriction semigroup $S$ is called {\em proper}, if the following two conditions hold:
\begin{enumerate}
\item if $s^*=t^*$ and $s \mathrel{\sigma} t$ then $s=t$,
\item if $s^+=t^+$ and $s \mathrel{\sigma} t$ then $s=t$.
\end{enumerate}
\end{definition}

Proper birestriction semigroups generalize $E$-unitary inverse semigroups. It is known and easy to see that $S$ is proper if and only if $\sigma$ coincides with the relation $\asymp$.

The following observation will be needed in the sequel.
\begin{proposition} \label{prop:generation} Let $S$ be an $X$-generated birestriction monoid. Then:
\begin{enumerate}
\item $S$ is $(X\cup P(S))$-generated as a monoid.
\item Every $u\in S$ can be written as $u=ev = wf$ where $e, f\in P(S)$ and $v$, $w$ belong to the $X$-generated submonoid of $S$.
\end{enumerate}
\end{proposition}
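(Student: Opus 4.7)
For (1), let $T$ be the submonoid of $(S,\cdot,1)$ generated by $X \cup P(S)$; the task is to show $T = S$. Since $S$ is, as a $(2,1,1,0)$-algebra, generated by $X$, every element of $S$ is the value of some term built from $X$, $1$, $\cdot$, $^*$, $^+$, and I plan a straightforward structural induction on such terms. The elements $x \in X$ and $1$ lie in $T$ by definition; a product of two elements of $T$ is in $T$ because $T$ is a submonoid; and for a term of the form $\tau^*$ or $\tau^+$, the value automatically belongs to $P(S) \subseteq T$, regardless of $\tau$. The one substantive observation is the last one, which makes the recursion bottom out at every application of the unary operations.

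For (2), denote by $M$ the plain submonoid of $S$ generated by $X$. I aim to show, by induction on $n$, that any product $u = a_1 a_2 \cdots a_n$ with $a_i \in X \cup P(S)$ can be written as $u = ev$ with $e \in P(S)$ and $v \in M$; by part (1), every $u \in S$ admits such a representation. The inductive step appends $a_n$ to $a_1 \cdots a_{n-1} = e'v'$. If $a_n \in X$, I absorb it into $v'$. If $a_n \in P(S)$, I plan to slide it leftward through $v'$ letter by letter using the ample identity $xe = (xe)^+ x$ from \eqref{eq:ample}; after passing through all letters of $v'$, the projection $a_n$ has been replaced by some new projection $f \in P(S)$ sitting to the left of $v'$, so $u = (e'f) v'$, and $e' f \in P(S)$ because $P(S)$ is a semilattice. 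The decomposition $u = wf$ follows by the mirror argument using the dual identity $es = s(es)^*$.

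The main technical point is the careful bookkeeping involved in sliding a projection through a word of $M$ one letter at a time, but this is routine once the ample identities are in place, and I do not expect any genuine obstacle. An equivalent, slightly slicker presentation is to verify directly that the set $\{ev : e \in P(S),\, v \in M\}$ is a submonoid of $S$ containing $X \cup P(S)$, which yields both parts simultaneously.
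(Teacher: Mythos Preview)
Your proposal is correct and follows essentially the same route as the paper: structural induction on terms for (1), and for (2) writing $u$ as a product over $X\cup P(S)$ and using the ample identities \eqref{eq:ample} to move projections to one end. One minor simplification: since $se=(se)^+s$ holds for all $s\in S$, you can slide a projection past the entire word $v'\in M$ in one step rather than letter by letter.
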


\begin{proof}
(1)  We show that every element $s\in S$ can be written as a product of elements from $X\cup P(S)$. If $s=1$, $s\in X$ or $s\in P(S)$, this clearly holds. Otherwise, we can write $s=t_1\cdot t_2$ and the statement easily follows applying inductive arguments.  

(2) Let $u\in S$. By part (1) we can write $u = u_0e_1u_1 \cdots e_nu_n$, where $u_i$ belong to the $X$-generated submonoid of $S$ and $e_i \in P(S)$ for all $i$. In view of \eqref{eq:ample}, the statement follows.
\end{proof}

\subsection{Partial monoid actions by partial bijections and  premorphisms}\label{subs:premor} Here we collect the necessary preliminaries on partial actions of monoids. For a comprehensive survey on partial actions, the reader is referred to \cite{D19}.

The following definition is taken from \cite[Definition 2.2]{Kud19}.
\begin{definition} (Premorphisms)
A {\em premorphism} from a monoid $M$ to a birestriction monoid $S$ is a map $\varphi:M \to S$ such that the following conditions hold:
\begin{itemize}
\item[(PM1)] $\varphi(1)=1$,
\item[(PM2)] $\varphi(m)\varphi(n) \leq \varphi(mn)$, for all $m,n\in M$.
\end{itemize}
\end{definition}
Condition (PM2) is equivalent to each of the following two conditions:
\begin{itemize}
\item[(PM2a)] $\varphi(m)\varphi(n) = \varphi(mn)(\varphi(m)\varphi(n))^*$, for all $m,n\in M$,
\item[(PM2b)] $\varphi(m)\varphi(n) = (\varphi(m)\varphi(n))^+\varphi(mn)$, for all $m,n\in M$.
\end{itemize}

For the next definition, we introduce the following conditions:
\begin{itemize}
\item[(LPM)] $\varphi(m)\varphi(n) = \varphi(m)^+\varphi(mn)$, for all $m,n\in M$,
\item[(RPM)] $\varphi(m)\varphi(n) = \varphi(mn)\varphi(n)^*$, for all $m,n\in M$.
\end{itemize}

\begin{definition} (Left strong, right strong and strong premorphisms)
A premorphism $\varphi \colon M\to S$ from a monoid $M$ to a birestriction monoid $S$ is called
{\em left strong} (resp. {\em right strong}), if it satisfies condition (LPM) (resp. (RPM)). If it is both left and right strong, it is called {\em strong}.
\end{definition}

If $\varphi$ is a premorphism from a monoid $M$ to the symmetric inverse monoid ${\mathcal I}(X)$, we denote $\varphi(m)(x)$ by $\varphi_m(x)$, for $m\in M$ and $x\in X$.
Premorphisms from $M$ to  ${\mathcal I}(X)$ are equivalent to partial actions of $M$ on $X$ by partial bijections, which we now define.

\begin{definition} (Partial actions of monoids)
Let $M$ be a monoid and $X$ a (non-empty) set. We say that $M$ {\em acts partially} on $X$ (from the left) if there exists a partial map (that is, a map defined on a subset of $M\times X$) $M\times X \to X, \, \,(m,x)\mapsto m\cdot x$ which satisfies the following conditions:
\begin{itemize}
\item[(PA1)] if $m\cdot x$ and $n\cdot(m\cdot x)$ are defined, then $nm \cdot x$ is defined and $n\cdot(m\cdot x)=nm \cdot x$, for all $m,n \in M$ and $x\in X$;
\item[(PA2)] $1\cdot x$ is defined and equals $x$, for all $x \in X$.
\end{itemize}
If for all $m\in M$ the partial map $X\to X$, $x\mapsto m\cdot x$ is injective, we say that $M$ acts on $X$ by {\em partial bijections.}
\end{definition}

A partial action $M\times X\to X$ by partial bijections, $(m,x)\mapsto m\cdot x$ corresponds to the premorphism $\varphi\colon M \to {\mathcal I}(X)$ such that $m\cdot x$ is defined if and only if $x\in \dom(\varphi_m)$ in which case $\varphi_m(x) = m\cdot x$. In fact, the premorphism $\varphi$ determines not only the left partial action $\cdot$, but also the right partial action $\circ$ such that $x\circ m$ is defined if and only if $x\in \ran \varphi_m = \dom\varphi_m^{-1}$, in which case $x\circ m = \varphi_m^{-1}(x)$. Each of the partial actions $\cdot$ and $\circ$ determines $\varphi$ and thus each of them determines the other, see \cite{Kud15} for details.

Conditions (LPM) and (RPM) have the following counterparts in terms of partial actions: 
\begin{itemize}
\item[(LSPA)] if $mn \cdot x$ and $(mn \cdot x)\circ m$ are defined, then $m\cdot (n\cdot x)$ is defined, for all $m,n\in M$ and $x\in X$,
\item[(RSPA)] if $mn \cdot x$ and $n\cdot x$ are defined then $m\cdot (n\cdot x)$ is defined, for all $m,n\in M$ and $x\in X$.
\end{itemize}

\begin{definition} (Left strong, right strong and strong partial actions by partial bijections)
We say that the partial action $\cdot$ of a monoid $M$ on a set $X$ by partial bijections is {\em left strong} (resp. {\em right strong} or {\em strong}) if its corresponding premorphism $\varphi$ is left strong (resp. right strong or strong) or, equivalently, if it satisfies (LSPA) (resp. (RSPA) or both (LSPA) and (RSPA)).
\end{definition}

We now recall the well-known definition of a partial action of a group and compare it with the above definitions.

\begin{definition}\label{def:partial_group} (Partial group actions)
Let $G$ be a group and $X$ a set. We say that $G$ 
{\em acts partially} on $X$ if there exists a partial map $\varphi:G\times X \to X, \, \,(g,x)\mapsto g\cdot x$ such that conditions (PA1), (PA2) and the following additional condition hold:
\begin{itemize}
\item[(PA3)] if $m\cdot x$ is defined, then $m^{-1}\cdot (m\cdot x)$ is defined and  $m^{-1}\cdot (m\cdot x)=x$, for all $m\in G$ and $x\in X$.   
\end{itemize}
\end{definition}

\begin{remark} \label{rem:group_pa} Note that a partial action of a group $G$ on a set $X$ is automatically by partial bijections.
It was observed by Megrelishvili and Schr\"oder in \cite{MS04} that if the monoid $M$ is a group,  (PA1), (PA2) and (PA3) hold if and only if (PA1), (PA2) and (LSPA) hold. 
That is, a partial group action is a left strong partial monoid action (of this group). 
One can show that, for a group, (LSPA) is equivalent to (RSPA), so the notions of left strong, right strong and strong partial actions coincide.
Therefore, strong partial monoid actions by partial bijections are a generalization of usual partial group actions. 
\end{remark}

\subsection{The structure of proper birestriction semigroups in terms of partial actions}\label{subs:proper_structure}
Let ${\mathcal P}$ be a poset. A non-empty subset $I$ of $\mathcal{P}$ is said to be an {\em order ideal}, if $x\leq y$ and $y\in I$ imply that  $x \in I$, for all $x,y\in {\mathcal{P}}$. For $p\in {\mathcal{P}}$ we put $p^\downarrow := \{q \in {\mathcal{P}} \colon q \leq p\}$ to be the {\em principal order ideal} generated by $p$. A map $f\colon {\mathcal P}\to {\mathcal Q}$ between posets is called an {\em order isomorphism}, if $x\leq y$ is equivalent to $f(x)\leq f(y)$, for all $x,y\in {\mathcal P}$.

Let now $Y$ be a semilattice and $M$ a monoid. By $\Sigma(Y)$ we denote the inverse semigroup of all order isomorphisms 
between order ideals of $Y$. Partial actions of $M$ on $Y$ by order isomorphisms between order ideals correspond to premorphisms from $M$ to $\Sigma(Y)$. Let $\varphi: M \to \Sigma(Y)$ be such a premorphism.
On the set
$$
Y \rtimes_{\varphi} M = \{(e,m) \in Y \times M \colon e \in \ran\varphi_m\}
$$
define the following operations:
\begin{equation}\label{eq:multiplication}
(e,m)(f,n)=(\varphi_m(\varphi_m^{-1}(e)\wedge f),mn),
\end{equation}
$$
(e,m)^+=(e,1),\,\, (e,m)^*=(\varphi_m^{-1}(e),1).
$$
Then  $Y\rtimes_{\varphi} M$ with the operations $\cdot, \,\,^*$ and $^+$ is a proper birestriction semigroup with  \mbox{$P(Y \rtimes_{\varphi} M) \simeq Y$} via the map $(y,1)\mapsto y$. If $Y$ has the top element $1$ then $Y \rtimes_{\varphi} M$ is a monoid with the identity element $(1,1)$. The congruence $\sigma$ on $Y \rtimes_{\varphi} M$ is given by $(e,m) \mathrel{\sigma} (f,n)$ if and only if $m=n$, so that $(Y \rtimes_{\varphi} M)/\sigma \simeq M$ via the map $(y,m)\mapsto m$. If the partial action $\varphi$ is understood, we will denote $Y \rtimes_{\varphi} M$ simply by $Y \rtimes M$.

Furthermore, each proper birestriction semigroup $S$ arises in this way, as follows. Define the premorphism $\varphi \colon S/\sigma \to \Sigma(P(S))$, called the {\em underlying premorphism} of $S$, by setting, for all $m\in S/\sigma$,
$$\dom\varphi_m = \{ e \in P(S) \colon \text{there exists } s \text{ with } [s]_{\sigma} = m \text{ such that } e \leq s^*\},$$
$\varphi_m(e)=(se)^+$, where $e\in \dom\varphi_m$ and $s$ is such that  $[s]_{\sigma} = m$ and $e \leq s^*$.

\begin{theorem}\cite{CG12, Kud15}\label{thm:proper}
Let $S$ be a proper birestriction semigroup. Then the map $f: S \to P(S) \rtimes_{\varphi} S/\sigma$, given by $s \mapsto (s^+,[s]_\sigma)$, is a $(2,1,1)$-isomorphism. If $S$ is a birestriction monoid then $f$ is a $(2,1,1,0)$-isomorphism.
\end{theorem}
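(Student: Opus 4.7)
The plan is to verify, in order, that (i) $\varphi$ is well-defined and is a premorphism into $\Sigma(P(S))$, (ii) $f$ lands in the codomain and is a $(2,1,1)$-morphism, and (iii) $f$ is bijective; the monoid case is an immediate add-on.

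My first move is to reformulate $\varphi_m$ in a cleaner form. For any $s$ with $[s]_\sigma = m$ and $e \leq s^*$, the element $u := se$ lies in $[m]_\sigma$ and satisfies $u^* = s^*e = e$, so that $\varphi_m(e) = (se)^+ = u^+$; conversely, every $u \in [m]_\sigma$ is recovered by taking $s := u$ and $e := u^*$. Thus
$$\dom\varphi_m = \{u^* : u \in [m]_\sigma\}, \qquad \varphi_m(u^*) = u^+.$$
In this form, well-definedness and injectivity of $\varphi_m$ are immediate from the two clauses of Definition~\ref{def:proper}: $u_1^* = u_2^*$ with $u_1 \mathrel{\sigma} u_2$ forces $u_1 = u_2$ by clause (2), hence $u_1^+ = u_2^+$, and dually for injectivity. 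The domain and range are order ideals because $f \leq u^*$ gives $uf \in [m]_\sigma$ with $(uf)^* = f$ (and dually for the range). Monotonicity uses properness in the form $\sigma = \asymp$: if $u^* \leq v^*$ with $u \mathrel{\sigma} v$, then $u = uu^*v^* = uv^* = vu^*$ (the last equality by bicompatibility), so $u \leq v$ and $u^+ \leq v^+$.

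For the premorphism property $\varphi_m \varphi_n \leq \varphi_{mn}$, the key observation is that whenever $e \in \dom(\varphi_m\varphi_n)$, one can write $e = v^*$ for some $v \in [n]_\sigma$ with $\varphi_n(e) = v^+ = u^*$ for some $u \in [m]_\sigma$; the composite $uv$ then lies in $[mn]_\sigma$ and by \eqref{eq:identities}
$$(uv)^* = (u^*v)^* = (v^+ v)^* = v^* = e, \qquad (uv)^+ = (uv^+)^+ = (uu^*)^+ = u^+,$$
so $e \in \dom \varphi_{mn}$ with $\varphi_{mn}(e) = u^+ = \varphi_m(\varphi_n(e))$. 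The condition $\varphi_1 = \mathrm{id}_{P(S)}$ is immediate.

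Finally, $f$ is well-defined because $s^+ = \varphi_{[s]_\sigma}(s^*) \in \ran\varphi_{[s]_\sigma}$; preservation of $^+$, $^*$, and (in the monoid case) $1$ is immediate from $\varphi_{[s]_\sigma}^{-1}(s^+) = s^*$; and multiplication unfolds via \eqref{eq:multiplication} to
$$f(s)f(t) = \bigl(\varphi_{[s]_\sigma}(s^* \wedge t^+),\, [st]_\sigma\bigr) = \bigl((s \cdot s^*t^+)^+,\, [st]_\sigma\bigr) = \bigl((st)^+,\, [st]_\sigma\bigr) = f(st),$$
using $ss^* = s$ and \eqref{eq:identities}. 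Injectivity follows from clause (2) of properness: $f(s) = f(t)$ forces $s^+ = t^+$ and $s \mathrel{\sigma} t$, hence $s = t$. Surjectivity is routine: given $(e, m)$, choose $u \in [m]_\sigma$ with $u^+ = e$, and then $f(u) = (e, m)$. The main obstacle will be the premorphism property verified in paragraph three; a naive attempt using arbitrary representatives $s \in [m]_\sigma$, $t \in [n]_\sigma$ produces $(ste)^* \leq e$ strictly, and only the compatibility condition $u^* = v^+$ built into the choice of $(u,v)$ makes $(uv)^* = e$ hold on the nose.
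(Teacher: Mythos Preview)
The paper does not give its own proof of this theorem; it is stated as a known result with a citation to \cite{CG12, Kud15}, so there is no argument in the paper to compare against.

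Your proof is essentially correct and follows the standard route. One small slip: in the paragraph where you verify well-definedness of $\varphi_m$, the implication ``$u_1^* = u_2^*$ with $u_1 \mathrel{\sigma} u_2$ forces $u_1 = u_2$'' is clause~(1) of Definition~\ref{def:proper}, not clause~(2); clause~(2) (the $^+$ version) is what gives injectivity of $\varphi_m$, and later injectivity of $f$. Apart from this mislabeling, the reformulation $\dom\varphi_m = \{u^* : u \in [m]_\sigma\}$, $\varphi_m(u^*) = u^+$ is clean and makes all the verifications transparent, and your closing remark about why the na\"ive choice of representatives fails for the premorphism condition is apt.
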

For a generalization of Theorem \ref{thm:proper} to proper extensions of birestriction semigroups, see \cite{DKhK21}.

Let $G$ be a group which acts partially (in the sense of Definition \ref{def:partial_group}), by order isomorphisms between order ideals, on a semilattice $Y$. Then $Y \rtimes G$ has the structure of an inverse semigroup if one defines $(e,g)^{-1} = (\varphi_g^{-1}(e), g^{-1})$. This is an $E$-unitary inverse semigroup with the maximum group quotient isomorphic to $G$. Furthermore, for an $E$-unitary inverse semigroup $S$  the statement of Theorem \ref{thm:proper} specializes to a variation, due to Kellendonk and Lawson \cite{KL04}, of the classical McAlister's $P$-theorem  \cite{M274}.  

\subsection{$F$-birestriction monoids} We recall the following definition \cite{Kud15, J16}.

\begin{definition}  ($F$-birestriction monoids)
A birestriction semigroup is {\em $F$-birestriction}  if every its $\sigma$-class has a maximum element with respect to the natural partial order. 
\end{definition}

It is easy to check (or see \cite{Kud15}) that an $F$-birestriction semigroup is necessarily proper and is a monoid. Let $S$ be an $F$-birestriction monoid and $s\in S$. Following \cite{AKSz21}, we denote the maximum element of the $\sigma$-class of $s$ by $\mx{s}$.\footnote{We alert the reader that the notation $\mx{s}$ differs from the respective notation in each of the works \cite{J16, Kud19, KLF24}.} For an $F$-birestriction monoid $S$ let the map
$\tau : S/\sigma \to S$ be given by  $s \mapsto \mx{t}$ where $t\in S$ is such that $\sigma^{\natural}(t)=s$. By \cite[Lemma 5.1]{Kud19} $\tau$ is a premorphism.

\begin{definition} (Left strong, right strong, strong and perfect $F$-birestriction monoids). We call an $F$-birestriction monoid
\begin{itemize}
\item[-] {\em left strong} (resp. {\em right strong} or {\em strong}) if the premorphism $\tau$ is left strong (resp. right strong or strong),
\item[-] {\em perfect} if the premorphism $\tau$ is a monoid morphism.
\end{itemize}
\end{definition}

It follows from  \cite[Proposition 5.2]{Kud19} that $\tau$ is strong (resp. left strong, right strong or perfect) if and only if the underlying premorphism of $S$ is strong (resp. left strong, right strong or perfect).  Remark \ref{rem:group_pa} implies that any $F$-inverse monoid is automatically strong, so that the classes of left strong, right strong and strong $F$-inverse monoids coincide with the class of all $F$-inverse monoids. Hence, not only the class of $F$-birestriction monoids, but also each of the classes of left strong, right strong and strong $F$-birestriction monoids generalizes the class of $F$-inverse monoids. Note, however, that perfect $F$-inverse monoids form a proper subclass of $F$-inverse monoids (each perfect $F$-inverse monoid is in fact a semidirect product of a semilattice and a group, see \cite{AKSz21}), so that the class of perfect $F$-birestriction monoids generalizes the class of perfect $F$-inverse monoids, but not that of all $F$-inverse monoids. 

\section{The structure of the free birestriction monoid ${\mathsf{FBR}}(X)$}\label{sec:structureFR}
Let $X$ be a non-empty set and $X^{-1} = \{x^{-1}\colon x\in X\}$ a disjoint bijective copy of $X$. By $(X\cup X^{-1})^*$ we denote the {\em free involutive monoid over $X$}. 
The involution $(\cdot)^{-1}$ on $(X\cup X^{-1})^*$ is given by $1^{-1}=1$ and $(x_1\cdots x_n)^{-1} = x_n^{-1}\cdots x_1^{-1}$ where $n\geq 1$ and $x_i\in X\cup X^{-1}$ for all $i$. If $u\in (X\cup X^{-1})^*$ and $S$ is an $X$-generated inverse monoid (in particular a group) by $[u]_S$ we denote the value of $u$ in $S$. We identify elements of the free $X$-generated group ${\mathsf{FG}}(X)$ with reduced words $g\in (X\cup X^{-1})^*$. In this section we denote the value of a reduced word $u\in (X\cup X^{-1})^*$ in the free $X$-generated inverse monoid ${\mathsf{FI}}(X)$ and in ${\mathsf{FG}}(X)$ by the same symbol $u$, it will be always clear from the context elements of which objects are considered. For reduced words $u,v\in (X\cup X^{-1})^*$ we write $(uv)_r$ for the reduced form of $uv$. 

It is well known that ${\mathsf{FI}}(X)$ is an $F$-inverse monoid and the maximum element of the $\sigma$-class which projects down to the reduced word $g\in {\mathsf{FG}}(X)$ equals $g$. Furthermore, ${\mathsf{FI}}(X)/\sigma \simeq {\mathsf{FG}}(X)$ and 
$$
{\mathsf{FI}}(X) \simeq E({\mathsf{FI}}(X)) \rtimes {\mathsf{FG}}(X).
$$
The partial action of ${\mathsf{FG}}(X)$ on $E({\mathsf{FI}}(X))$ 
simplifies to  
\begin{equation}\label{eq:a6aa}
e\in {\mathrm{dom}}\varphi_g \text{ if } e\leq g^* \text{ in which case } \varphi_g(e) = (ge)^+.
\end{equation}
In addition, 
\begin{equation}\label{eq:a6ab}
e\in {\mathrm{dom}}\varphi_g^{-1} \text{ if }e\leq g^+ \text{ in which case } \varphi_g^{-1}(e) = (eg)^*.
\end{equation}
Since
\begin{align*}
(e,u)(f,v) & = (\varphi_u(\varphi_u^{-1}(e)f), (uv)_r) & (\text{by } \eqref{eq:multiplication})\\
& = (\varphi_u((eu)^*f), (uv)_r) & (\text{by }\eqref{eq:a6ab})\\
& = ((u(eu)^*f)^+, (uv)_r) & (\text{by }\eqref{eq:a6aa})\\
& = ((euf)^+, (uv)_r) & (\text{by }\eqref{eq:ample})\\
& = (e(uf)^+, (uv)_r) & (\text{by }\eqref{eq:id1}),
\end{align*}
the operations on $E({\mathsf{FI}}(X)) \rtimes {\mathsf{FG}}(X)$ are given by
\begin{equation}\label{def:oper1}
(e,u)(f,v) =  (e(uf)^+,  (uv)_r), \,\, (e,u)^{-1} = ((eu)^*,u^{-1}),
\end{equation}
\begin{equation}\label{def:oper2}
(e,u)^* = ((eu)^*,1), \,\, (e,u)^+ = (e,1),
\end{equation}
and the identity element of $E({\mathsf{FI}}(X)) \rtimes {\mathsf{FG}}(X)$ is $(1,1)$.
Since the partial action of ${\mathsf{FG}}(X)$ on $E({\mathsf{FI}}(X))$ restricts to $X^*$, the subset 
$$
E({\mathsf{FI}}(X)) \rtimes X^* = \{(e,u)\in E({\mathsf{FI}}(X)) \rtimes {\mathsf{FG}}(X) \colon u\in X^*\}
$$
is closed with respect to the multiplication, the unary operations $^*$ and $^+$ and contains the identity element. It is thus a birestriction submonoid of $E({\mathsf{FI}}(X)) \rtimes {\mathsf{FG}}(X)$.
In this section we show that it is isomorphic to the free birestriction monoid ${\mathsf{FBR}}(X)$. This result was first proved in \cite{FGG09} and then generalized in \cite{Kud19} using  different methods. Here we present a direct and self-contained proof of this result based on the approach of \cite{Kud19}. The constructions involved will play an important role in subsequent sections.
The next lemma is inspired by \cite[Proposition 7.10]{Kud19}.

\begin{lemma}\label{lem:gen_positivecorner}
The birestriction monoid $E({\mathsf{FI}}(X))\rtimes X^*$ is $X$-generated via the map $$x \mapsto (x^+,x).$$
\end{lemma}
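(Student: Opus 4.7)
The plan is to show that every element $(e,u) \in E({\mathsf{FI}}(X))\rtimes X^*$ lies in the birestriction submonoid $B$ generated by $\{(x^+, x) : x \in X\}$. A direct induction on length, using \eqref{def:oper1}, gives $(x_1^+, x_1)(x_2^+, x_2)\cdots(x_n^+, x_n) = (u^+, u)$ for every $u = x_1\cdots x_n \in X^*$, so all ``path elements'' $(u^+,u)$ lie in $B$. Since $(e, 1)\cdot(u^+, u) = (eu^+, u) = (e,u)$ (the last equality because $e \leq u^+$ by the very definition of $E({\mathsf{FI}}(X))\rtimes X^*$), the task reduces to proving that every projection $(e, 1)$ with $e \in E({\mathsf{FI}}(X))$ belongs to $B$.

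For this, I would invoke the standard geometric fact that every finite subtree of the Cayley graph of ${\mathsf{FG}}(X)$ containing $1$ is the union of the root-to-vertex paths it carries; consequently, every $e \in E({\mathsf{FI}}(X))$ is a finite meet $\bigwedge_i g_i^+$ of path idempotents with $g_i \in {\mathsf{FG}}(X)$ reduced. Since meets of projections in $E({\mathsf{FI}}(X))\rtimes X^*$ are computed by multiplication, it is enough to show that $(g^+, 1) \in B$ for every reduced word $g \in {\mathsf{FG}}(X)$, and this I would do by induction on the length of $g$. The positive extension $g = x g'$ is handled by the identity $((x^+, x)\cdot ((g')^+, 1))^+ = ((xg')^+, 1)$, whose verification reduces to $(x(g')^+)^+ = x(g')^+ x^{-1} = (xg')^+$ in ${\mathsf{FI}}(X)$, valid because $g$ is reduced.

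The genuine obstacle is the negative extension $g = x^{-1} g'$: here one needs to realise the conjugate idempotent $(x^{-1} g')^+ = x^{-1}(g')^+ x$ inside the birestriction signature, which has no direct access to $x^{-1}$. The trick, in the spirit of \cite{Kud19}, is that multiplying $((g')^+, 1)$ on the right by the generator $(x^+, x)$ and then taking $^*$ performs exactly this conjugation:
$$
(((g')^+, 1)\cdot (x^+, x))^* = ((g')^+ x^+, x)^* = (((g')^+ x)^*, 1) = ((x^{-1} g')^+, 1),
$$
where the penultimate equality uses $x^+ x = x$ and the last one uses $((g')^+ x)^* = x^{-1}(g')^+ x = (x^{-1} g')^+$ in ${\mathsf{FI}}(X)$. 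Combining these two inductive cases closes the induction and, together with the reduction steps above, completes the proof.
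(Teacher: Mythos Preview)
Your argument is correct and the core mechanism---peeling off one letter at a time via the identities $((x^+,x)(f,1))^+$ and $((f,1)(x^+,x))^*$---is exactly what the paper uses. The difference lies in the setup of the induction. You first invoke the Munn-tree description of $E(\mathsf{FI}(X))$ to write an arbitrary idempotent as a finite meet of path idempotents $g^+$ with $g$ reduced, and then induct on the length of the reduced word $g$, stripping the \emph{first} letter. The paper avoids the Munn-tree decomposition entirely: it inducts directly on the minimal length of an arbitrary (not necessarily reduced) word $v$ with $v^*=e$, stripping the \emph{last} letter. Since $e=v^*=(v^{-1})^+$, the two inductions are dual, and your two cases correspond precisely to the paper's two cases with the roles of $^+$ and $^*$ swapped. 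Your route buys a cleaner geometric picture at the cost of importing an external fact about $E(\mathsf{FI}(X))$; the paper's route is more self-contained but slightly less transparent about why the induction terminates.
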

\begin{proof}
Let $(e,u)\in E({\mathsf{FI}}(X))\rtimes X^*$ where $u=x_1\cdots x_n\in X^*$. Since $(e,u)=(e,1)(u^+,u)=(e,1)(x_1^+,x_1)\cdots(x_n^+,x_n)$, it suffices to prove that $(e,1)$ can be written as a term in generators. We argue by induction on the smallest length $n$ of a word $v$ over $X\cup X^{-1}$ such that $v^*=v^{-1}v=e$ in ${\mathsf{FI}}(X)$. If $n=0$, $(e,1)=(1,1)$, and we are done. Suppose that $n\geq 1$ and that the claim is proved for $e= v^*$ where $|v|\leq n$. If $e=(wx)^* = (w^*x)^*$, where $|w|=n$ and $x\in X$, we have:
\begin{align*}
(e,1)& =  ((w^*x)^*,1) & (\text{by the choice of } e)\\
& = ((w^*x^+x)^*,1) & (\text{by } \eqref{def:left_rest})\\
& = (((w^*x)^+x)^*,1) & (\text{by } \eqref{eq:id1})\\
& = ((w^*x)^+,x)^* & (\text{by } \eqref{def:oper2})\\
& = ((w^*,1)(x^+,x))^* & (\text{by } \eqref{eq:id1} \text{ and }\eqref{def:oper1}).
\end{align*}
Similarly, if $e=(wx^{-1})^*$, where $|w|=n$ and $x\in X$, we have $(e,1)= ((x^+,x)(w^*,1))^+$. The statement now follows applying the inductive assumption.
\end{proof}

The next result is similar to the well-known result about $E({\mathsf{FI}}(X))\rtimes {\mathsf{FG}}(X)$.

\begin{proposition}\label{prop:a7b}\mbox{}
\begin{enumerate}
\item 
Let $(e,u), (f,v)\in   E({\mathsf{FI}}(X))\rtimes X^*$. Then $(e,u) \mathrel{\sigma} (f,v)$ if and only if $u=v$.  
\item $E({\mathsf{FI}}(X))\rtimes X^*$ is an $F$-birestriction monoid with $(u^+,u)$ being the maximum element in the $\sigma$-class of $(e,u)$.
\end{enumerate}
\end{proposition}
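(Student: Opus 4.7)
My plan is to recognise $E({\mathsf{FI}}(X))\rtimes X^*$ as a partial action product in the sense of Section~\ref{subs:proper_structure}, so that the general structure theory delivers part~(1) almost for free, and then to verify part~(2) by a short direct computation within a single $\sigma$-class.

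First, I would observe that the partial action of ${\mathsf{FG}}(X)$ on $E({\mathsf{FI}}(X))$ by order isomorphisms between principal order ideals restricts to the submonoid $X^*\subseteq{\mathsf{FG}}(X)$: any product of elements of $X^*$ involves no cancellation in ${\mathsf{FG}}(X)$, so $E({\mathsf{FI}}(X))\rtimes X^*$ is literally the partial action product of $E({\mathsf{FI}}(X))$ by $X^*$ with respect to this restricted action. The general fact, stated just before Theorem~\ref{thm:proper}, that $\sigma$ on a partial action product is given by equality of the second coordinate then yields $(e,u)\mathrel{\sigma}(f,v)\iff u=v$, which is part~(1).

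For part~(2), I would work inside the $\sigma$-class of a fixed element $(e,u)$, which by part~(1) and formula~\eqref{eq:a6aa} equals
$$
\{(f,u)\colon f\in\ran\varphi_u\}=\{(f,u)\colon f\leq u^+\},
$$
since $\ran\varphi_u=u^+{\downarrow}$. Using~\eqref{def:oper1} and the ample identity from~\eqref{eq:ample}, a short calculation gives, for any $e,f\leq u^+$,
$$
(f,u)(e,u)^*=(f,u)((eu)^*,1)=(f(u(eu)^*)^+,u)=(f(eu)^+,u)=(fe,u),
$$
using $u(eu)^*=eu$ by~\eqref{eq:ample} and $(eu)^+=eu^+=e$ because $e\leq u^+$. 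Hence $(e,u)\leq(f,u)$ iff $e\leq f$, so the natural partial order on the $\sigma$-class coincides with the restricted order on $u^+{\downarrow}$, whose greatest element is $u^+$. This forces $(u^+,u)$ to be the maximum element of the $\sigma$-class, proving~(2).

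The only point requiring care is checking that the multiplication on $E({\mathsf{FI}}(X))\rtimes X^*$ given by~\eqref{def:oper1} coincides with the one produced by the abstract partial action product construction~\eqref{eq:multiplication} applied to the restricted action of $X^*$; fortunately, this is precisely the chain of equalities displayed in the excerpt immediately before~\eqref{def:oper1}. Once this identification is in hand, no further conceptual obstacle remains and both parts reduce to routine manipulation with the birestriction identities.
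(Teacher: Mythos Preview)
Your proof is correct and close in spirit to the paper's, but the organisation differs slightly. The paper first establishes directly that $(e,u)\leq(f,v)$ if and only if $u=v$ and $e\leq f$, then uses the common-lower-bound characterisation of $\sigma$ to get part~(1), and reads off part~(2) immediately from the membership condition $e\leq u^+$. You instead invoke the general fact from Subsection~\ref{subs:proper_structure} that $\sigma$ on any partial action product is equality of second coordinates (having identified $E({\mathsf{FI}}(X))\rtimes X^*$ as such a product for the restricted premorphism), and then carry out the partial-order computation only within a single $\sigma$-class for part~(2). Both routes are short and routine; your appeal to the general structure theory is arguably cleaner for part~(1), while the paper's direct order characterisation makes part~(2) a one-liner. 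Your remark about ``no cancellation'' is not actually what is needed---the restriction of a premorphism to a submonoid is automatically a premorphism---but this does not affect correctness.
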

\begin{proof}
(1) It is easy to see that $(e,u)\leq (f,v)$ if and only if $u=v$ and $e\leq f$. So if $(e,u)$ and $(f,v)$ have a common lower bound, we must have $u=v$. On the other hand, if $u=v$ then $(ef,u)\leq (e,u), (f,u)$. 

(2) Since $(e,u)\in E({\mathsf{FI}}(X))\rtimes X^*$ if and only if $e\leq u^+$ and in view of part (1), the element $(u^+,u)$ is the maximum element in its $\sigma$-class.
\end{proof}

\begin{proposition}\label{prop:a7a}
The map $\tau\colon X^* \to   E({\mathsf{FI}}(X))\rtimes X^*$, given by $u\mapsto (u^+,u)$, is a monoid morphism. Consequently, $E({\mathsf{FI}}(X))\rtimes X^*$ is a perfect $F$-birestriction monoid.
\end{proposition}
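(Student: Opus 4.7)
The plan is a direct computation: show $\tau(uv) = \tau(u)\tau(v)$ and $\tau(1) = (1,1)$, and then read off the ``perfect'' conclusion from the definition. Since by Proposition~\ref{prop:a7b}(1) we have $(E({\mathsf{FI}}(X))\rtimes X^*)/\sigma \simeq X^*$, and by Proposition~\ref{prop:a7b}(2) the element $(u^+,u)$ is precisely the maximum of the $\sigma$-class corresponding to $u$, the map $u\mapsto (u^+,u)$ is indeed the premorphism $\tau$ of the definition; the task is to upgrade ``premorphism'' to ``morphism''.

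First I would note $\tau(1) = (1^+,1) = (1,1)$, which is the identity element of $E({\mathsf{FI}}(X))\rtimes X^*$. Next, for $u,v\in X^*$ apply the multiplication formula \eqref{def:oper1}. Crucially, $uv$ is already a reduced word in $(X\cup X^{-1})^*$ (no inverse letters occur), so $(uv)_r = uv$, and we obtain
\begin{equation*}
\tau(u)\tau(v) = (u^+,u)(v^+,v) = \bigl(u^+(uv^+)^+,\, uv\bigr).
\end{equation*}
Now the first coordinate is to be simplified inside ${\mathsf{FI}}(X)$. By the identity $(st)^+ = (st^+)^+$ from \eqref{eq:identities} we have $(uv^+)^+ = (uv)^+$. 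Furthermore $(uv)^+ \le u^+$ (projections of products lie below the left factor's projection), and since $P({\mathsf{FI}}(X))$ is a semilattice in which $e\le f$ forces $ef = e$, we get $u^+(uv)^+ = (uv)^+$. Hence
\begin{equation*}
\tau(u)\tau(v) = \bigl((uv)^+,\, uv\bigr) = \tau(uv).
\end{equation*}

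Finally, the ``Consequently'' part is immediate from the definition: a premorphism that is also a monoid morphism is perfect, and $E({\mathsf{FI}}(X))\rtimes X^*$ was already shown in Proposition~\ref{prop:a7b}(2) to be an $F$-birestriction monoid. I expect no real obstacle here; the only point that needs a moment's care is the application of the reduction $(uv^+)^+ = (uv)^+$ together with $(uv)^+\le u^+$, both of which are immediate from the birestriction identities already listed in the preliminaries.
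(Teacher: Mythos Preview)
Your proof is correct and follows essentially the same route as the paper: a direct computation using the multiplication formula \eqref{def:oper1} and the identity $(st^+)^+=(st)^+$ from \eqref{eq:identities}. The only cosmetic difference is that the paper simplifies the first coordinate via $u^+(uv^+)^+ = (u^+uv^+)^+ = (uv)^+$ (using \eqref{eq:id1} and $u^+u=u$), while you use $(uv^+)^+=(uv)^+$ followed by $(uv)^+\le u^+$; both are immediate from the listed identities.
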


\begin{proof} For any $u,v\in X^*$ we have
\begin{align*}
(u^+,u)(v^+,v) & =  (u^+(uv^+)^+, uv) & (\text{by } \eqref{def:oper1})\\
& = ((u^+uv^+)^+, uv) & (\text{by }\eqref{eq:id1})\\
& = ((uv)^+, uv) & (\text{by } \eqref{def:left_rest} \text{ and } \eqref{eq:identities}),
\end{align*}
as needed.
\end{proof}

We remark, however, that there is no analogue of Proposition \ref{prop:a7a} for the premorphism $\tau\colon {\mathsf{FG}}(X)\to E({\mathsf{FI}}(X))\rtimes {\mathsf{FG}}(X)$ given by $g\mapsto (g^+,g)$.
Indeed, we have $$\tau(x)\tau(x^{-1})=(x^+,x)(x^*,x^{-1}) = ((xx^{-1})^+, 1) = (x^+, 1) \neq (1,1) = \tau(1) = \tau( (xx^{-1})_r).$$

We now formulate the main theorem of this section.

\begin{theorem} \label{th:structure:f_restr} $E({\mathsf{FI}}(X))\rtimes X^*$ is canonically $(2,1,1,0)$-isomorphic to the free birestriction monoid ${\mathsf{FBR}}(X)$. 
\end{theorem}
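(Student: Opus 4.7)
The plan is to prove the theorem by applying Theorem~\ref{thm:proper} to ${\mathsf{FBR}}(X)$. Lemma~\ref{lem:gen_positivecorner} together with the universal property of ${\mathsf{FBR}}(X)$ immediately supplies a surjective $(2,1,1,0)$-morphism $\pi\colon{\mathsf{FBR}}(X)\twoheadrightarrow E({\mathsf{FI}}(X))\rtimes X^*$ extending $x\mapsto(x^+,x)$; the task is to show $\pi$ is injective. I would achieve this by exhibiting ${\mathsf{FBR}}(X)$ as a partial action product over its own projection semilattice in such a way that the decomposition matches $E({\mathsf{FI}}(X))\rtimes X^*$ factor by factor. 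Uniqueness of the extension of generators then identifies that decomposition with $\pi$.

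First I would verify two preparatory facts. Since a reduced birestriction monoid is a monoid, the maximum reduced quotient ${\mathsf{FBR}}(X)/\sigma$ is freely generated by $X$ as a monoid, and so equals $X^*$. Next, since ${\mathsf{FI}}(X)$ is an $E$-unitary inverse monoid, and hence a proper birestriction monoid, the universal property of ${\mathsf{FBR}}(X)$ yields a canonical $(2,1,1,0)$-morphism $\mu\colon{\mathsf{FBR}}(X)\to{\mathsf{FI}}(X)$ extending the inclusion of $X$ in ${\mathsf{FI}}(X)$, and the composition $\sigma^{\natural}\circ\mu\colon{\mathsf{FBR}}(X)\to{\mathsf{FG}}(X)$ agrees with the natural map ${\mathsf{FBR}}(X)\to X^*\hookrightarrow{\mathsf{FG}}(X)$. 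Using this, properness of ${\mathsf{FBR}}(X)$ reduces to injectivity of $\mu$ on projections: if $s\mathrel{\sigma}t$ and $s^+=t^+$ in ${\mathsf{FBR}}(X)$, then the analogous equalities hold in the proper monoid ${\mathsf{FI}}(X)$, forcing $\mu(s)=\mu(t)$, which, together with the projection-semilattice identification below, gives $s=t$.

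The main obstacle is identifying the projection semilattice $P({\mathsf{FBR}}(X))$ with $E({\mathsf{FI}}(X))$ via $\mu$. Surjectivity of $\mu|_{P({\mathsf{FBR}}(X))}$ follows from the inductive construction in Lemma~\ref{lem:gen_positivecorner}: each element $e\in E({\mathsf{FI}}(X))$ is obtained from the generators of $E({\mathsf{FI}}(X))\rtimes X^*$ by iterated $(2,1,1,0)$-operations, and the same formal expressions applied in ${\mathsf{FBR}}(X)$ produce a pre-image in $P({\mathsf{FBR}}(X))$. Injectivity is the delicate step. Following the approach of \cite{Kud19}, I would put each projection of ${\mathsf{FBR}}(X)$ into a canonical form as a meet of projections associated with words in the $X$-generated submonoid (cf.~Proposition~\ref{prop:generation}), and then verify that this canonical form is determined by the image in $E({\mathsf{FI}}(X))$, ultimately by invoking Munn's graphical description of $E({\mathsf{FI}}(X))$ via finite connected subgraphs of $\Cay({\mathsf{FG}}(X),X)$ containing the origin. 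Once this identification is in place, the underlying premorphism $\varphi\colon X^*\to\Sigma(P({\mathsf{FBR}}(X)))$ corresponds under it with the restriction of the partial action described by \eqref{eq:a6aa}--\eqref{eq:a6ab} to $X^*\subseteq{\mathsf{FG}}(X)$, by direct computation on generators, and Theorem~\ref{thm:proper} delivers the required isomorphism ${\mathsf{FBR}}(X)\cong E({\mathsf{FI}}(X))\rtimes X^*$.
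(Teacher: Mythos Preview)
Your overall strategy is a reasonable alternative to the paper's, but two steps are not actually carried out.

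First, your properness argument is circular. From $s\mathrel{\sigma}t$ and $s^+=t^+$ you correctly deduce $\mu(s)=\mu(t)$ in ${\mathsf{FI}}(X)$, but $s$ and $t$ are not projections, so ``the projection-semilattice identification'' does not let you conclude $s=t$; that would require injectivity of $\mu$ on all of ${\mathsf{FBR}}(X)$, which is the theorem itself. Properness can in fact be obtained cheaply and independently: write $s=ev$, $t=fw$ with $e,f\in P({\mathsf{FBR}}(X))$ and $v,w$ in the $X$-generated submonoid (Proposition~\ref{prop:generation}); then $s\mathrel{\sigma}t$ forces $v=w$ in $X^*$ and hence in ${\mathsf{FBR}}(X)$, while $s^+=t^+$ gives $ev^+=fv^+$, whence $s=ev^+v=fv^+v=t$. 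You should replace your argument by this one.

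Second, and more seriously, the heart of the matter --- injectivity of $\mu$ on projections --- is only gestured at (``canonical forms \dots\ Munn's graphical description''). This is exactly the nontrivial content. The paper handles it by going in the opposite direction: rather than decomposing ${\mathsf{FBR}}(X)$ and matching factors, it shows directly that $E({\mathsf{FI}}(X))\rtimes X^*$ has the universal property of the free object. For an arbitrary $X$-generated birestriction monoid $F$ it builds a map $D\colon{\mathsf{FI}}(X)\to P(F)$ by recursion on word length, verifies (Lemmas~\ref{lem:7.2}--\ref{lem:feb9} and Proposition~\ref{prop:7.5}) that $D$ respects the Wagner congruence, and then checks that $\Psi(e,u)=D_e u$ is a canonical $(2,1,1,0)$-morphism (Proposition~\ref{prop:isom1}). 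Specialising $F={\mathsf{FBR}}(X)$, this $D$ is precisely the inverse to your $\mu|_P$, so the paper's $D$ construction is exactly the work your sketch omits. The payoff of the paper's route is that $D$ is defined for every $F$, which is reused verbatim in Section~\ref{sec:projections} to identify $P(S)\simeq E(\tilde S)$ for arbitrary presentations; your approach, even if completed, would have to redo that argument.
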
   

To prove Theorem \ref{th:structure:f_restr}, we prove that $E({\mathsf{FI}}(X))\rtimes X^*$ has the universal property of ${\mathsf{FBR}}(X)$. Let $F$ be an $X$-generated birestriction monoid. We aim to construct a canonical (and necessarily surjective) $(2,1,1,0)$-morphism  $E({\mathsf{FI}}(X))\rtimes X^*\to F$. 

For this, we define the map $D\colon {\mathsf{FI}}(X) \to P(F)$, which is a variation of the construction from \cite{Kud19}.
We first define the map $D\colon (X\cup X^{-1})^*\to P(F)$, $u\mapsto D_u$ recursively on the length $|u|$ of the word $u$. If $|u|=0$ then $u=1$, and we put $D_u=1$. Let now $|u|\geq 1$ and write $u=\Tilde{u}v$ where $|\tilde{u}| =|u|-1$ and $v\in X\cup X^{-1}$. We then put:
\begin{align}\label{def:d}
D_u =
\begin{cases}
(D_{\Tilde{u}}x)^*, &\text{ if } v= x \in X,\\
(xD_{\Tilde{u}})^+, &\text{ if } v=x^{-1} \in X^{-1}.
\end{cases}
\end{align}

\begin{remark}\label{rem:d}
Applying induction and \eqref{eq:identities}, it is easy to see that \eqref{def:d} remains valid if $u=\Tilde{u}v$ where $v\in X^+ \cup (X^+)^{-1}$. \end{remark}

If, for example, $X=\{a,b,c,d,e,f\}$ and $u = ac^{-1}b^{-1}def=a(bc)^{-1}def$ then
$$
D_u = (D_{a(bc)^{-1}}def)^* = ((bc D_a)^+def)^* = ((bc a^*)^+def)^*.
$$

The following is immediate from the definition.

\begin{lemma}\label{lem:7.2} If $D_u=D_v$ then $D_{uw} = D_{vw}$, for any $u,v,w\in (X\cup X^{-1})^*$.
\end{lemma}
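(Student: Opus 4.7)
The proof is by induction on the length $|w|$ of the word $w\in(X\cup X^{-1})^*$. The idea is that the recursive definition \eqref{def:d} computes $D_u$ by stripping the last letter of $u$ and combining with a recursively obtained value, so two words that already agree under $D$ will continue to agree after appending a common suffix one letter at a time.

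For the base case $|w|=0$, we have $w=1$, so $uw=u$ and $vw=v$, and the conclusion $D_{uw}=D_{vw}$ is just the hypothesis $D_u=D_v$.

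For the inductive step, suppose $|w|\geq 1$ and write $w=w'\xi$ where $|w'|=|w|-1$ and $\xi\in X\cup X^{-1}$; assume, as inductive hypothesis, that $D_{uw'}=D_{vw'}$. The key observation is that $uw=(uw')\xi$ and $vw=(vw')\xi$, so both $D_{uw}$ and $D_{vw}$ are computed by applying the recursive clause \eqref{def:d} to $uw'$ and $vw'$ respectively, with the same trailing letter $\xi$. Thus, if $\xi=x\in X$ we have
\[
D_{uw}=(D_{uw'}\,x)^{*}\quad\text{and}\quad D_{vw}=(D_{vw'}\,x)^{*},
\]
while if $\xi=x^{-1}\in X^{-1}$ we have
\[
D_{uw}=(x\,D_{uw'})^{+}\quad\text{and}\quad D_{vw}=(x\,D_{vw'})^{+}.
\]
In either case, the equality $D_{uw'}=D_{vw'}$ from the inductive hypothesis yields $D_{uw}=D_{vw}$, completing the induction.

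There is no real obstacle here; the statement is essentially a well-definedness property of the recursive construction, and the only thing to check is that the recursion indeed peels off from the right, which is exactly what \eqref{def:d} does. One could alternatively phrase the argument by defining, for each fixed $w$, the map $u\mapsto D_{uw}$ and observing that it factors through $u\mapsto D_u$, but the direct induction on $|w|$ is the cleanest presentation.
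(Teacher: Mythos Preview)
Your proof is correct and is essentially the detailed unpacking of what the paper means when it says the lemma ``is immediate from the definition'': the recursive clause \eqref{def:d} strips letters from the right, so an induction on $|w|$ is exactly the intended argument. There is nothing to add.
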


We aim to show that the map $D$ factors through the defining relations of ${\mathsf{FI}}(X)$:
$$ uu^{-1}u = u, \,\, uu^{-1}vv^{-1}=vv^{-1}uu^{-1} \,\text{ for all } u,v\in (X\cup X^{-1})^*.
$$

We first prove the following auxiliary lemmas.

\begin{lemma}\label{lem:feb9}
Suppose $u,v  \in (X\cup X^{-1})^*$. Then $D_{uvv^{-1}}=D_uD_{v^{-1}}$.
\end{lemma}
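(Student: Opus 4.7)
The plan is to prove the claim by induction on $|v|$. When $|v|=0$, both sides reduce to $D_u$ since $D_1=1$. For the inductive step, assume the identity for all shorter words and decompose $v$ from the \emph{left}, writing $v=yv_1$ with $y\in X\cup X^{-1}$ and $|v_1|=|v|-1$; then $v^{-1}=v_1^{-1}y^{-1}$ and
$$
uvv^{-1} \;=\; (uy)\,v_1 v_1^{-1}\,y^{-1}.
$$
The inner block $v_1 v_1^{-1}$ is now positioned so that the inductive hypothesis applied to the pair $(uy,v_1)$ yields $D_{uy\,v_1 v_1^{-1}} = D_{uy}\,D_{v_1^{-1}}$. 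It then suffices to append the final letter $y^{-1}$ using the recursion \eqref{def:d}, to expand $D_{v^{-1}}=D_{v_1^{-1}y^{-1}}$ in the same way, and to reconcile the two resulting expressions via the birestriction identities.

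I split into two cases on $y$. If $y=x\in X$, then appending $y^{-1}=x^{-1}$ to the simplified prefix and substituting $D_{uy}=(D_u x)^*$ from \eqref{def:d} gives
$$
D_{uvv^{-1}} \;=\; \bigl(x\,(D_u x)^*\,D_{v_1^{-1}}\bigr)^+, \qquad D_u D_{v^{-1}} \;=\; D_u\,(xD_{v_1^{-1}})^+.
$$
The key simplification is $x(D_u x)^*=D_u x$, which is a direct application of the ample identity $es=s(es)^*$ from \eqref{eq:ample}; this turns the left-hand side into $(D_u\,x\,D_{v_1^{-1}})^+$, after which $(es)^+=es^+$ from \eqref{eq:id1} pulls $D_u$ outside and matches the right. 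The case $y=x^{-1}$ with $x\in X$ is dual: here $D_{uy}=(xD_u)^+$, $D_{v^{-1}}=(D_{v_1^{-1}}x)^*$, and the identity to verify becomes
$$
\bigl((xD_u)^+\,D_{v_1^{-1}}\,x\bigr)^* \;=\; D_u\,(D_{v_1^{-1}}x)^*.
$$
One commutes the two projections $(xD_u)^+$ and $D_{v_1^{-1}}$, then uses $se=(se)^+s$ from \eqref{eq:ample} to rewrite $(xD_u)^+x=xD_u$, collapsing the left-hand side to $(D_{v_1^{-1}}xD_u)^*$; finally $(se)^*=s^*e$ from \eqref{eq:id1} together with commutativity of projections produces the right-hand side.

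I expect the main obstacle to be bookkeeping rather than conceptual. One cannot appeal to Remark~\ref{rem:d} to absorb the whole suffix $v^{-1}$ at once, because $v$ is a mixed word and in general belongs neither to $X^+$ nor to $(X^+)^{-1}$; this forces the induction to peel off one letter at a time. The choice of \emph{left} decomposition $v=yv_1$ rather than $v=v_1 y$ is dictated by the need to place the block $v_1 v_1^{-1}$ adjacent to the fresh prefix $uy$, so that the hypothesis applies without any rearrangement. With that combinatorial choice in place, the remaining work reduces to the short sequences of identity manipulations indicated above.
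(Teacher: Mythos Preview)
Your proof is correct and follows essentially the same route as the paper: induction on $|v|$, left decomposition $v=yv_1$, application of the inductive hypothesis to the pair $(uy,v_1)$, then a single use of the recursion \eqref{def:d} to append $y^{-1}$ on the outside and to unfold $D_{v^{-1}}$, with the ample identities \eqref{eq:ample} and \eqref{eq:id1} doing the matching. The paper only spells out the case $y\in X$ and declares the other case symmetric; your explicit treatment of $y=x^{-1}$ (commuting the two projections and using $(xD_u)^+x=xD_u$) is exactly that symmetric argument.
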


\begin{proof}
We argue by induction on $|v|$. If $|v|=0$ we have $v=1$, and there is nothing to prove. Suppose now that $v=x\tilde{v}$ for $\tilde{v} \in (X\cup X^{-1})^*$ with $|\tilde{v}|\geq 0$ and $x \in X\cup X^{-1}$.  Suppose first that $x\in X$. Then $D_{ux\tilde{v}(x\tilde{v})^{-1}}$ can be rewritten as
\begin{align*}
D_{ux\tilde{v}\tilde{v}^{-1}x^{-1}}&= ( xD_{ux\tilde{v}\tilde{v}^{-1}})^+ & (\text{by \eqref{def:d}})\\
&=(xD_{ux}D_{\tilde{v}^{-1}})^+ & (\text{by the induction hypothesis})\\
&=(x(D_ux)^*D_{\tilde{v}^{-1}})^+ & (\text{by \eqref{def:d}})\\
&=(D_uxD_{\tilde{v}^{-1}})^+ & (\text{by \eqref{eq:ample}})\\
&=D_u(xD_{\tilde{v}^{-1}})^+ & (\text{by \eqref{eq:id1}})\\
&=D_uD_{\tilde{v}^{-1}x^{-1}}=D_uD_{(x\tilde{v})^{-1}} & (\text{by \eqref{def:d}}),
\end{align*}
as needed. If $x \in X^{-1}$, the argument is symmetric. 
\end{proof}

\begin{lemma} Let $s,t, u,v\in (X\cup X^{-1})^*$. Then:
\begin{enumerate}
\item $D_{suu^{-1}vv^{-1}t} = D_{svv^{-1}uu^{-1}t}$,
\item $D_{sut} = D_{suu^{-1}ut}$.
\end{enumerate}
\end{lemma}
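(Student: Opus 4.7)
The plan is to prove both parts by reducing the words inside $D(\cdot)$ to expressions in the semilattice $P(F)$, where multiplication is commutative and idempotent, and then appending the suffix $t$ via Lemma \ref{lem:7.2}. First, I would observe that Lemma \ref{lem:feb9}, with $v$ replaced by $v^{-1}$, also yields the dual formula
\[
D_{uv^{-1}v}=D_{u}D_{v}, \qquad u,v\in (X\cup X^{-1})^{*}.
\]
Both identities will be the main computational tools.

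For part (1), I would apply Lemma \ref{lem:feb9} twice to each side:
\[
D_{suu^{-1}vv^{-1}}=D_{suu^{-1}}D_{v^{-1}}=D_{s}D_{u^{-1}}D_{v^{-1}},
\]
and symmetrically $D_{svv^{-1}uu^{-1}}=D_{s}D_{v^{-1}}D_{u^{-1}}$. Since $P(F)$ is a semilattice, these products coincide, so $D_{suu^{-1}vv^{-1}}=D_{svv^{-1}uu^{-1}}$, and Lemma \ref{lem:7.2} gives the result after appending $t$.

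For part (2), the key intermediate step is the auxiliary monotonicity claim
\[
D_{uv}\leq D_{v} \quad \text{for all } u,v\in (X\cup X^{-1})^{*},
\]
which I would prove by induction on $|v|$, handling separately the cases where the last letter of $v$ lies in $X$ (where $D_{uv}=(D_{u\tilde v}x)^{*}$) or in $X^{-1}$ (where $D_{uv}=(xD_{u\tilde v})^{+}$). In either case the induction hypothesis $D_{u\tilde v}\leq D_{\tilde v}$ combined with monotonicity of multiplication and of the operations $^{*}$, $^{+}$ yields $D_{uv}\leq D_{v}$. Granting this claim, the dual form of Lemma \ref{lem:feb9} gives
\[
D_{suu^{-1}u}=D_{(su)u^{-1}u}=D_{su}D_{u}=D_{su},
\]
where the final equality uses $D_{su}\leq D_{u}$ in the semilattice $P(F)$. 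Lemma \ref{lem:7.2} then yields $D_{sut}=D_{suu^{-1}ut}$.

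The main (and only slightly subtle) obstacle is the auxiliary claim $D_{uv}\leq D_{v}$: one must carefully track the recursive definition \eqref{def:d} in both cases of the last letter of $v$, and invoke the monotonicity of $^{*}$, $^{+}$ and of multiplication from the left. Everything else is a bookkeeping application of Lemmas \ref{lem:7.2} and \ref{lem:feb9} together with commutativity of $P(F)$.
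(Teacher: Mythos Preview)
Your proof of part (1) is identical to the paper's. For part (2), your argument is correct but genuinely different from the paper's.

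The paper proves (2) by induction on $|u|$: writing $u=\tilde{u}x$ with $x\in X\cup X^{-1}$, it expands $suu^{-1}ut=s\tilde{u}xx^{-1}\tilde{u}^{-1}\tilde{u}xt$, applies part~(1) to swap the two idempotent blocks $xx^{-1}$ and $\tilde{u}^{-1}\tilde{u}$, and then invokes the induction hypothesis twice (once for $\tilde{u}$ and once for the single letter $x$). Your route instead establishes the auxiliary monotonicity $D_{uv}\leq D_v$ by induction on $|v|$ and combines it with the dual form $D_{wv^{-1}v}=D_wD_v$ of Lemma~\ref{lem:feb9} to obtain $D_{suu^{-1}u}=D_{su}D_u=D_{su}$ directly. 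Both inductions are of comparable length. Your approach has the mild advantage of making part~(2) independent of part~(1) and of isolating the monotonicity property (which is close in spirit to Corollary~\ref{cor:hom_d} proved later), while the paper's approach avoids introducing an extra lemma and keeps the two parts tied together.
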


\begin{proof}
(1) In view of Lemma \ref{lem:7.2}, it suffices to prove that \begin{equation}\label{eq:5a}
D_{suu^{-1}vv^{-1}} = D_{svv^{-1}uu^{-1}}.
\end{equation}
By Lemma \ref{lem:feb9} the left-hand side of \eqref{eq:5a} is equal to $D_{suu^{-1}}D_{v^{-1}} = D_sD_{u^{-1}}D_{v^{-1}}$ and the right-hand side of \eqref{eq:5a} is equal to $ D_sD_{v^{-1}}D_{u^{-1}}$. These are equal, as projections of $F$ commute.

(2) We argue by induction on $|u|$. If $u=1$, there is nothing to prove. Suppose $|u|\geq 1$ and write $u=\tilde{u}x$, with  $\tilde{u} \in (X\cup X^{-1})^*$ and $x \in X\cup X^{-1}$. We have:
\begin{align*}
D_{s(\tilde{u}x)(\tilde{u}x)^{-1}(\tilde{u}x)t} & =D_{s\tilde{u}xx^{-1}\tilde{u}^{-1}\tilde{u}xt}   & (\text{since } (ab)^{-1} = b^{-1}a^{-1})\\ 
& = D_{s\tilde{u}\tilde{u}^{-1}\tilde{u}xx^{-1}xt} & (\text{by part (1)})\\
& = D_{s\tilde{u}xx^{-1}xt} &  (\text{by the induction hypothesis})\\
& = D_{s\tilde{u}xt} & (\text{by the induction hypothesis}),
\end{align*}
as needed.
\end{proof}

It follows that map $D: (X\cup X^{-1})^* \to P(F)$ gives rise to a well defined map $D\colon {\mathsf{FI}}(X)\to P(F)$. The following is immediate by Lemma \ref{lem:feb9}. 

\begin{proposition}\label{prop:7.5}
For any $v \in {\mathsf{FI}}(X)$ and $e\in E({\mathsf{FI}}(X))$ we have: 
\begin{enumerate}
\item $D_{v^{-1}}=D_{vv^{-1}}=D_{v^+}$,
\item $D_v=D_{v^{-1}v}=D_{v^*}$,
\item $D_vD_e=D_{ve}$.
\end{enumerate} 
\end{proposition}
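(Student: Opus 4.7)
The plan is to derive each of the three statements directly from Lemma \ref{lem:feb9} by specializing its parameter $u$ and, where appropriate, swapping the roles of $v$ and $v^{-1}$. Recall from the paragraph immediately preceding the proposition that $D$ is well defined on ${\mathsf{FI}}(X)$, so I am free to replace any word by any other word representing the same element of ${\mathsf{FI}}(X)$ before computing $D$.

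For part (1), I would apply Lemma \ref{lem:feb9} with $u=1$. Since $D_1$ equals the top projection $1$ of $F$ by the base case of the recursive definition \eqref{def:d}, this yields $D_{vv^{-1}}=D_1\,D_{v^{-1}}=D_{v^{-1}}$. The remaining equality $D_{vv^{-1}}=D_{v^+}$ is then the observation that $vv^{-1}$ is a word representative of $v^+$ in ${\mathsf{FI}}(X)$. Part (2) then follows by substituting $v^{-1}$ for $v$ in part (1): one gets $D_{v^{-1}v}=D_{(v^{-1})^{-1}}=D_v$, and the identification $D_{v^{-1}v}=D_{v^*}$ comes from $v^*=v^{-1}v$ in ${\mathsf{FI}}(X)$.

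For part (3), I would choose a word $w$ with $e=ww^{-1}$ in ${\mathsf{FI}}(X)$ (every projection of ${\mathsf{FI}}(X)$ admits such a representative), so that $ve$ is represented by the word $vww^{-1}$. Applying Lemma \ref{lem:feb9} with parameters $u:=v$ and $v:=w$, and then applying part (1) to $w$, gives
$$
D_{ve}=D_{vww^{-1}}=D_v\,D_{w^{-1}}=D_v\,D_{ww^{-1}}=D_v\,D_e,
$$
as required.

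I do not foresee a real obstacle: the only point requiring any care is to name the underlying word of each element of ${\mathsf{FI}}(X)$ explicitly (in particular $vv^{-1}$ for $v^+$, $v^{-1}v$ for $v^*$, and $ww^{-1}$ for $e$) before invoking Lemma \ref{lem:feb9}, so that the lemma is applied to genuine strings rather than to equivalence classes.
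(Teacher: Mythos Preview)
Your proposal is correct and follows exactly the route the paper intends: the paper's proof is the single line ``immediate by Lemma~\ref{lem:feb9}'', and your argument simply spells out the three specializations of that lemma (taking $u=1$, replacing $v$ by $v^{-1}$, and writing $e=ww^{-1}$) that make the claim immediate.
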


We record the following important consequence of Proposition \ref{prop:7.5}.

\begin{corollary}\label{cor:hom_d}
The map $D\colon {\mathsf{FI}}(X)\to P(F)$ restricts to the morphism of semilattices $D\colon E({\mathsf{FI}}(X))\to P(F)$.
\end{corollary}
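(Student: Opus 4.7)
The plan is to derive the corollary almost immediately from Proposition~\ref{prop:7.5}, since the hard computations have already been done. There are three things to verify: (i) the map is well-defined on $E({\mathsf{FI}}(X))$ (which follows from $D$ being well-defined on ${\mathsf{FI}}(X)$), (ii) the image lies in $P(F)$, and (iii) multiplication and identity are preserved.

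First, I would observe that the image of $D$ lies in $P(F)$ by inspection of the recursive definition \eqref{def:d}: in the base case $D_1=1\in P(F)$, and in the inductive step $D_u$ is defined as either $(D_{\tilde u}x)^*$ or $(xD_{\tilde u})^+$, both of which are projections by construction. In particular $D$ maps $E({\mathsf{FI}}(X))$ into $P(F)$, and sends the identity of $E({\mathsf{FI}}(X))$ (namely $1$) to the identity $1$ of $P(F)$.

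For multiplicativity, let $e,f\in E({\mathsf{FI}}(X))$. Applying Proposition~\ref{prop:7.5}(3) with $v:=e\in {\mathsf{FI}}(X)$ and with the projection $f$ in the role of $e$, we obtain
\[
D_eD_f = D_{ef}.
\]
Since $E({\mathsf{FI}}(X))$ is a semilattice with meet given by multiplication, and $P(F)$ is a semilattice under the multiplication inherited from $F$, this says precisely that $D$ preserves meets. Thus $D\colon E({\mathsf{FI}}(X))\to P(F)$ is a semilattice morphism.

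There is essentially no obstacle here, as all the substantive work is absorbed into Proposition~\ref{prop:7.5}(3) (whose proof in turn reduces, via Lemma~\ref{lem:feb9}, to commutativity of projections in $F$ and induction on word length). The only thing worth flagging is the compatibility check: one must be sure that the formula $D_e D_f = D_{ef}$ at the level of ${\mathsf{FI}}(X)$ really descends to the restricted map, but this is automatic because the well-definedness of $D$ on ${\mathsf{FI}}(X)$ has already been established prior to Proposition~\ref{prop:7.5}.
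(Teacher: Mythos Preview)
Your proof is correct and follows essentially the same approach as the paper, which simply records the corollary as an immediate consequence of Proposition~\ref{prop:7.5}; you have merely filled in the details by invoking part~(3) with $v$ taken to be an idempotent.
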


We are now in a position to prove the universal property of $E({\mathsf{FI}}(X)) \rtimes X^*$.

\begin{proposition}\label{prop:isom1}
Let $F$ be an $X$-generated birestriction monoid. The map 
\begin{equation}\label{def:psi}
\Psi \colon E({\mathsf{FI}}(X)) \rtimes X^* \to F, \,\, (e,u) \mapsto D_eu,
\end{equation}
is a canonical $(2,1,1,0)$-morphism. 
\end{proposition}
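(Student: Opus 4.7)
The proof checks that $\Psi$ respects each of the four $(2,1,1,0)$-operations; canonicity on the generators $(x^+,x)$ of $E({\mathsf{FI}}(X))\rtimes X^*$ (Lemma~\ref{lem:gen_positivecorner}) then falls out immediately. The single observation that will carry most of the weight is the following: for every $(e,u)\in E({\mathsf{FI}}(X))\rtimes X^*$ one has $e\leq u^+$ in ${\mathsf{FI}}(X)$, by the very definition of the semidirect product together with \eqref{eq:a6aa}. Corollary~\ref{cor:hom_d} yields $D_e\leq D_{u^+}$ in $P(F)$, and Proposition~\ref{prop:7.5}(1) combined with Remark~\ref{rem:d} (applied to $u^{-1}\in(X^+)^{-1}$) gives $D_{u^+}=D_{u^{-1}}=(uD_1)^+=u^+$ in $F$, so $D_e\leq u^+$ in $F$ as well. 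This bound will be reused in every subsequent paragraph.

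The identity is immediate: $\Psi(1,1)=D_1\cdot 1=1$. For canonicity I compute $D_{x^+}=D_{xx^{-1}}=(xD_1)^+=x^+$, hence $\Psi(x^+,x)=x^+ x=x$ in $F$, matching the chosen generating map. Preservation of $^+$ follows at once: $\Psi((e,u)^+)=\Psi(e,1)=D_e$, whereas $\Psi(e,u)^+=(D_e u)^+=D_e u^+$ by \eqref{eq:id1}, and $D_e u^+=D_e$ by the key bound.

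For $^*$, one has $\Psi((e,u)^*)=\Psi((eu)^*,1)=D_{(eu)^*}=D_{eu}$ by Proposition~\ref{prop:7.5}(2), and $\Psi(e,u)^*=(D_e u)^*$. Since $(eu)^*=u^{-1}eu$ in ${\mathsf{FI}}(X)$, Remark~\ref{rem:d} (peeling $u\in X^+$ off the right, the case $u=1$ being trivial) gives $D_{u^{-1}eu}=(D_{u^{-1}e}\,u)^*$; Proposition~\ref{prop:7.5}(3) then rewrites $D_{u^{-1}e}=D_{u^{-1}}D_e=u^+D_e=D_e$, and so $D_{(eu)^*}=(D_e u)^*$, as required.

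Multiplicativity is the main obstacle. I compute $\Psi((e,u)(f,v))=\Psi(e(uf)^+,uv)=D_eD_{(uf)^+}\cdot uv$ via Corollary~\ref{cor:hom_d}, and $\Psi(e,u)\Psi(f,v)=D_e\,u\,D_f\,v$, so the job reduces to the identity $D_{(uf)^+}\cdot u=u D_f$ in $F$. I would prove the sharper claim $D_{(uf)^+}=(u D_f)^+$: in ${\mathsf{FI}}(X)$ one has $(uf)^+=ufu^{-1}$, so Remark~\ref{rem:d} applied to the suffix $u^{-1}\in(X^+)^{-1}$ yields $D_{ufu^{-1}}=(u D_{uf})^+$; Proposition~\ref{prop:7.5}(3) rewrites $D_{uf}=D_u D_f$; and a short induction on $|u|$, using \eqref{def:d} together with \eqref{eq:identities}, shows $D_u=u^*$ in $F$ for $u\in X^+$, whence $u D_u=uu^*=u$ by \eqref{def:right_rest}. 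The first identity of \eqref{def:left_rest} then gives $(uD_f)^+u=uD_f$, completing the check. The delicate step is precisely this sharper claim: it is where the inverse-semigroup identity $(uf)^+=ufu^{-1}$ must be transported faithfully through $D$ into the ambient birestriction monoid $F$, and it rests on the combined use of Remark~\ref{rem:d} and Proposition~\ref{prop:7.5}(3).
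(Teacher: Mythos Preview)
Your proof is correct and follows essentially the same route as the paper's: both verify the operations one by one, both hinge on the bound $D_e\leq u^+$ in $F$, and both reduce the multiplicativity check to the identity $D_{(uf)^+}=(uD_f)^+$ (the paper reaches it via $D_{fu^{-1}}=(uD_f)^+$ and $D_{fu^{-1}}=D_{(uf)^+}$, you via $D_{ufu^{-1}}=(uD_{uf})^+$ and $D_{uf}=D_uD_f=u^*D_f$, which is slightly longer but equally valid). One small slip: the step $(uD_f)^+u=uD_f$ does not follow from the first identity of \eqref{def:left_rest} (that only gives $(uD_f)^+(uD_f)=uD_f$); the correct reference is the second identity of \eqref{eq:ample}, $se=(se)^+s$, with $s=u$ and $e=D_f$.
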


\begin{proof}
The map $\Psi$ clearly respects the identity element.  We show that it respects the operations $^*,\, ^+$ and the multiplication. Let first $(e,u)\in  E({\mathsf{FI}}(X))\rtimes X^*$. We calculate:
\begin{align*}
(\Psi(e,u))^*&=\left(D_{e}u\right)^* & (\text{by } \eqref{def:psi})\\
&= D_{eu} = D_{(eu)^*} & (\text{by Remark }\ref{rem:d} \text{ and Proposition \ref{prop:7.5}(2)})\\
&=\Psi((eu)^*,1)=\Psi((e,u)^*)  & (\text{by } \eqref{def:psi} \text{ and } \eqref{def:oper2}).
\end{align*}
Since $e\in \ran\varphi_u$, we have $e\leq u^+$. Then:
\begin{align*}
D_{e} & \leq D_{u^+} & (\text{by Corollary  \ref{cor:hom_d}})\\
& = D_{u^{-1}} & (\text{by Proposition \ref{prop:7.5}(1)})\\
& = u^+ & (\text{by } \text{Remark \ref{rem:d}}).
\end{align*}
Therefore, 
$$(\Psi(e, u))^+ = (D_{e}u)^+ = D_{e}u^+ = D_e = \Psi(e, 1) = \Psi((e, u)^+).$$
Let now $(e,u), (f,v)\in  E({\mathsf{FI}}(X))\rtimes X^*$. Applying \eqref{def:oper1}  and \eqref{def:psi}, we have:
\begin{equation}\label{eq:dec20b}
\Psi((e,u)(f,v)) = \Psi(e(uf)^+, uv) = D_{e(uf)^+}uv.
\end{equation}
On the other hand, we have:
\begin{align*}
\Psi(e,u)\Psi(f,v) & = D_{e}uD_{f}v & (\text{by }\eqref{def:psi}) \\
& = D_e(uD_f)^+uv & (\text{by } \eqref{eq:ample})\\
& = D_eD_{fu^{-1}}uv & (\text{by Remark }  \ref{rem:d})\\
& = D_eD_{(uf)^+}uv & (\text{by Proposition \ref{prop:7.5}(1)})\\
& =  D_{e(uf)^+}uv & (\text{by Proposition \ref{prop:7.5}(3)}),
\end{align*}
so that $\Psi$ preserves the multiplication.
Finally, $\Psi$ is canonical since, in view of \eqref{def:psi} and \eqref{def:d}, we have
$\Psi(x^+,x) = D_{x^+}x  =x^+x = x$.
\end{proof}

It follows that $E({\mathsf{FI}}(X))\rtimes X^*$ has the universal property of the free $X$-generated birestriction monoid ${\mathsf{FBR}}(X)$ and is thus canonically isomorphic to ${\mathsf{FBR}}(X)$. This completes the proof of Theorem \ref{th:structure:f_restr}.

We thus have a pair of $X$-canonical isomorphisms
$$
\Psi\colon E({\mathsf{FI}}(X)) \rtimes X^* \to {\mathsf{FBR}}(X), \quad \Psi^{-1}\colon {\mathsf{FBR}}(X) \to E({\mathsf{FI}}(X)) \rtimes X^*.
$$
It follows from \eqref{def:psi} that that $\Psi(e,1) = D_e$, so that $\Psi^{-1}(D_e) = (e,1)$.
Writing
$$
\Psi^{-1}\colon {\mathsf{FBR}}(X) \to E({\mathsf{FI}}(X)) \rtimes X^* \subseteq E({\mathsf{FI}}(X)) \rtimes {\mathsf{FG}}(X) \simeq {\mathsf{FI}}(X)
$$
where the latter isomorphism is canonical, we obtain the canonical embedding 
$$
\psi \colon 
{\mathsf{FBR}}(X) \to {\mathsf{FI}}(X)
$$
satisfying $\psi(D_e) =e$ for all $e\in E({\mathsf{FI}}(X))$.
It follows that
$$
\psi\colon P({\mathsf{FBR}}(X)) \to E({\mathsf{FI}}(X))
$$
is an isomorphism of semilattices whose inverse isomorphism is $$D\colon E({\mathsf{FI}}(X)) \to P({\mathsf{FBR}}(X)).$$

We now summarize the results of this section.

\begin{theorem}\mbox{} \label{th:free_restr}
\begin{enumerate}
\item \label{i:fri1} The birestriction monoid 
$E({\mathsf{FI}}(X)) \rtimes X^*$ is $X$-generated via the map $x\mapsto  (x^+,x)$.
\item \label{i:fri2} $E({\mathsf{FI}}(X)) \rtimes X^*$ (and thus also ${\mathsf{FBR}}(X)$) is perfect $F$-birestriction with $(u^+,u)$ being the maximum element in its $\sigma$-class.
\item \label{i:fri3} The map $X^*\to E({\mathsf{FI}}(X)) \rtimes X^*$, given by $u\mapsto (u^+,u)$, is a monoid morphism.
\item \label{i:fri4} The canonical morphism ${\mathsf{FBR}}(X)\to E({\mathsf{FI}}(X)) \rtimes X^*$ is an isomorphism with the inverse $\Psi\colon E({\mathsf{FI}}(X)) \rtimes X^* \to {\mathsf{FBR}}(X)$ given in \eqref{def:psi}.
\item \label{i:fri5} The canonical $(2,1,1,0)$-morphism 
\begin{equation*}\label{eq:a18a}
\psi\colon {\mathsf{FBR}}(X) \to {\mathsf{FI}}(X)
\end{equation*}
restricts to the isomorphism of semilattices $\psi\colon P({\mathsf{FBR}}(X))\to E({\mathsf{FI}}(X))$ with the inverse isomorphism $D\colon E({\mathsf{FI}}(X))\to P({\mathsf{FBR}}(X))$.
\end{enumerate}
\end{theorem}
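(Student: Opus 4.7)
The plan is to present Theorem \ref{th:free_restr} as a bookkeeping summary, since each of its five items has been essentially established earlier in the section. For item \eqref{i:fri1} I would cite Lemma \ref{lem:gen_positivecorner} verbatim. For item \eqref{i:fri2}, the characterization of $\sigma$-classes and the identification of $(u^+,u)$ as the maximum element in the $\sigma$-class of $(e,u)$ is precisely Proposition \ref{prop:a7b}, and perfectness then follows because Proposition \ref{prop:a7a} shows the section $u \mapsto (u^+, u)$ is a monoid morphism, which matches the definition of a perfect $F$-birestriction monoid. Item \eqref{i:fri3} is a direct restatement of Proposition \ref{prop:a7a}.

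For item \eqref{i:fri4} I would invoke Proposition \ref{prop:isom1}, which produces, for every $X$-generated birestriction monoid $F$, a canonical $(2,1,1,0)$-morphism $\Psi_F \colon E({\mathsf{FI}}(X))\rtimes X^* \to F$ defined by $(e,u)\mapsto D_eu$. Combined with item \eqref{i:fri1}, this shows that $E({\mathsf{FI}}(X))\rtimes X^*$ satisfies the universal property of ${\mathsf{FBR}}(X)$ and thus is canonically isomorphic to it. Specializing $F = {\mathsf{FBR}}(X)$ produces the explicit inverse $\Psi$ to the canonical morphism ${\mathsf{FBR}}(X)\to E({\mathsf{FI}}(X))\rtimes X^*$, as stated.

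For item \eqref{i:fri5} I would define $\psi$ as the composition of $\Psi^{-1}$ with the inclusion $E({\mathsf{FI}}(X))\rtimes X^* \hookrightarrow E({\mathsf{FI}}(X))\rtimes {\mathsf{FG}}(X)$ followed by the standard canonical isomorphism with ${\mathsf{FI}}(X)$. On generators $\psi(x)=x$, so $\psi$ is indeed the canonical $(2,1,1,0)$-morphism. Because $\Psi(e,1) = D_e$, the composite sends $D_e \mapsto e$ on projections; combined with Corollary \ref{cor:hom_d} this yields the required semilattice isomorphism $\psi \colon P({\mathsf{FBR}}(X)) \to E({\mathsf{FI}}(X))$, whose inverse is $D$. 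Surjectivity on projections is automatic since $\Psi$ is an isomorphism and its restriction to the fiber over $1$ bijects $E({\mathsf{FI}}(X))\times\{1\}$ onto $P({\mathsf{FBR}}(X))$, so every projection of ${\mathsf{FBR}}(X)$ is of the form $D_e$.

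Since every ingredient is already in hand, I do not anticipate any substantive obstacle; the only minor point requiring care is verifying that the map $\psi$ defined via the composition is indeed the canonical $(2,1,1,0)$-morphism ${\mathsf{FBR}}(X)\to{\mathsf{FI}}(X)$, which reduces to checking the values on generators.
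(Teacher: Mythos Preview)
Your proposal is correct and matches the paper's approach exactly: the theorem is presented there as a summary of the preceding results, with items \eqref{i:fri1}--\eqref{i:fri3} citing Lemma~\ref{lem:gen_positivecorner} and Propositions~\ref{prop:a7b}, \ref{prop:a7a}, item~\eqref{i:fri4} following from the universal property established via Proposition~\ref{prop:isom1}, and item~\eqref{i:fri5} obtained by composing $\Psi^{-1}$ with the inclusion into ${\mathsf{FI}}(X)$ and using $\Psi(e,1)=D_e$.
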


\section{Projections of an $X$-generated birestriction monoid}\label{sec:projections}
As usual, we denote presentations of inverse monoids, monoids and groups by ${\mathrm{Inv}}\langle X \mathrel{\vert} R \rangle$, ${\mathrm{Mon}}\langle X \mathrel{\vert} R \rangle$ and ${\mathrm{Gr}}\langle X \mathrel{\vert} R \rangle$. By ${\mathrm{BRestr}}\langle X \mathrel{\vert} R \rangle$ we will similarly denote the birestriction monoid generated by a set $X$ subject to a set of relations $R$ of the form $u=v$ where $u$ and $v$ are elements of ${\mathsf{FBR}}(X)$. By definition ${\mathrm{BRestr}}\langle X \mathrel{\vert} R \rangle$ is the canonical quotient of ${\mathsf{FBR}}(X)$ by the congruence generated by the set $$\{(u,v)\colon u=v \text{ is a relation in } R\}.$$

\begin{lemma}\label{lem:generation2a}
\mbox{}
\begin{enumerate}
\item ${\mathrm{BRestr}} \langle X \mathrel{\vert} R\rangle/\sigma \simeq {\mathrm{Mon}} \langle X \mathrel{\vert} \sigma^{\natural}(R)\rangle$, where $\sigma^{\natural}\colon{\mathsf{FBR}}(X) \to X^*$ is the quotient morphism.
\item ${\mathrm{Inv}} \langle X \mathrel{\vert} R\rangle / \sigma \simeq {\mathrm{Gr}} \langle X \mathrel{\vert} \sigma^{\natural}(R)\rangle$ where $\sigma^{\natural}\colon{\mathsf{FI}}(X) \to {\mathsf{FG}}(X)$ is the quotient morphism.
\end{enumerate}
\end{lemma}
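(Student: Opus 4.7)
The plan is to prove both items by the same diagram chase, exploiting the elementary fact that every monoid is a reduced birestriction monoid (with $x^+ = x^* = 1$) and every group is a reduced inverse monoid. In each case I will exhibit a mutually inverse pair of $X$-canonical morphisms between the two objects in question.

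For (1), write $B = {\mathrm{BRestr}}\langle X \mid R\rangle$ and $M = {\mathrm{Mon}}\langle X \mid \sigma^\natural(R)\rangle$. Viewing $M$ as a reduced birestriction monoid, the universal property of ${\mathsf{FBR}}(X)$ extends the $X$-assignment into $M$ uniquely to a $(2,1,1,0)$-morphism $\pi\colon {\mathsf{FBR}}(X) \to M$. Because $M$ is reduced, $\pi$ identifies all projections, so it factors as $\pi = \bar\pi \circ \sigma^\natural$ for a unique monoid morphism $\bar\pi\colon X^* \to M$. Hence for every relation $u = v$ in $R$ we have $\pi(u) = \bar\pi(\sigma^\natural(u)) = \bar\pi(\sigma^\natural(v)) = \pi(v)$, so $\pi$ descends to a morphism $B \to M$ and, again because $M$ is reduced, further to an $X$-canonical morphism $B/\sigma \to M$. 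Conversely, the composition ${\mathsf{FBR}}(X) \to B \to B/\sigma$ lands in a reduced birestriction monoid and so factors as a monoid morphism $X^* \to B/\sigma$; the relations $R$ already hold in $B$ and hence in $B/\sigma$, so this morphism sends each pair $\sigma^\natural(u) = \sigma^\natural(v)$ with $u=v$ in $R$ to an equality, and therefore descends to a monoid morphism $M \to B/\sigma$. The two composite morphisms fix every generator from $X$ and are consequently identities by the respective universal properties.

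For (2) the argument is verbatim parallel, with ${\mathsf{FI}}(X)$ replacing ${\mathsf{FBR}}(X)$, ${\mathrm{Inv}}\langle X \mid R\rangle$ replacing ${\mathrm{BRestr}}\langle X \mid R\rangle$, groups replacing monoids, and ${\mathsf{FG}}(X)$ replacing $X^*$. The only specifically inverse-monoid input is that the maximum reduced quotient of an $X$-generated inverse monoid is a group and that any group is a reduced inverse monoid, which gives the required extension $\pi\colon {\mathsf{FI}}(X) \to M$ from any $X$-assignment into a group $M$.

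The proof is essentially routine and I do not expect a real obstacle; the one point needing care is to keep the two layers of quotients straight. Concretely, if $\theta$ denotes the birestriction-monoid congruence on ${\mathsf{FBR}}(X)$ generated by $R$, then the preimage along $\sigma^\natural$ of the monoid congruence on $X^*$ generated by $\sigma^\natural(R)$ coincides with $\theta \vee \sigma$, which is also the preimage of the reduced-quotient congruence on $B$; both sides of the claimed isomorphism are then canonically identified with ${\mathsf{FBR}}(X)/(\theta \vee \sigma)$, and analogously for (2).
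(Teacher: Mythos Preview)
Your proof is correct and follows the same universal-property approach as the paper, only spelled out in considerably more detail; the paper's argument is a two-sentence sketch that any $X$-canonical monoid quotient of ${\mathrm{BRestr}}\langle X \mid R\rangle$ must satisfy $\sigma^{\natural}(R)$, leaving the converse direction and part (2) implicit. Your explicit construction of both mutually inverse morphisms, together with the alternative description via the congruence $\theta \vee \sigma$, is a welcome expansion.
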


\begin{proof}
Since ${\mathrm{BRestr}} \langle X \mathrel{\vert} R\rangle$ satisfies relations $R$, any its $X$-canonical monoid quotient must satisfy relations $\sigma^{\natural}(R)$, which implies (1).
The second statement follows similarly.
\end{proof}

Let $S$ be a birestriction monoid given by the presentation
$$
S = {\mathrm{BRestr}}\langle X \mathrel{\vert} R \rangle.
$$
The $X$-canonical morphism $\psi\colon {\mathsf{FBR}}(X)\to {\mathsf{FI}}(X)$ maps the relations $R$ to the relations $\psi(R)$ on ${\mathsf{FI}}(X)$.
By $\tilde{S}$ we denote the {\em universal inverse monoid} of $S$ which is given by the presentation 
\begin{equation}\label{eq:univ_inv}
\tilde{S} = {\mathrm{Inv}}\langle X \mathrel{\vert} \psi(R) \rangle.
\end{equation}
By the definition, the identical map on  $X$ extends to an $X$-canonical $(2,1,1,0)$-morphism $\psi\colon S \to \tilde{S}$ and every $X$-canonical $(2,1,1,0)$-morphism from $S$ to an $X$-generated inverse monoid canonically factors  through $\tilde{S}$.
The morphism $\psi\colon S\to \tilde{S}$ restricts to the morphism of semilattices $\psi|_{P(S)}\colon P(S)\to E(\tilde{S})$.

\begin{lemma}
The map $D\colon {\mathsf{FI}}(X)\to P({\mathsf{FBR}}(X))$ induces a well defined map $D\colon \tilde{S}\to P(S)$.
\end{lemma}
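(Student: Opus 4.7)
Let $\pi\colon {\mathsf{FBR}}(X)\to S$ denote the defining quotient morphism. I will show that the relation
\[
\Theta = \{(a, b)\in {\mathsf{FI}}(X)^2 \colon \pi(D_{xay}) = \pi(D_{xby}) \text{ for all } x, y\in {\mathsf{FI}}(X)\}
\]
is an inverse monoid congruence on ${\mathsf{FI}}(X)$ containing $\psi(R)$. Since $\tilde S = {\mathsf{FI}}(X)/\rho$, where $\rho$ is the smallest inverse monoid congruence containing $\psi(R)$, it will follow that $\rho\subseteq \Theta$; specialising to $x=y=1$ then gives $\pi(D_a) = \pi(D_b)$ whenever $a$ and $b$ coincide in $\tilde S$, so the assignment $\alpha\mapsto \pi(D_{\hat\alpha})$ (for any lift $\hat\alpha\in {\mathsf{FI}}(X)$ of $\alpha$) is a well-defined map $\tilde S\to P(S)$.

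That $\Theta$ is an equivalence relation closed under two-sided multiplication is immediate from the definition: given $(a_i, b_i)\in \Theta$ for $i=1,2$ and $x,y\in {\mathsf{FI}}(X)$, applying the $\Theta$-relation of $(a_1,b_1)$ to the pair $(x, a_2 y)$ and then of $(a_2,b_2)$ to $(xb_1, y)$ yields $(a_1a_2, b_1b_2)\in \Theta$. Since every semigroup congruence on an inverse semigroup is automatically compatible with inversion, $\Theta$ is an inverse monoid congruence.

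The non-trivial part is the verification $\psi(R)\subseteq \Theta$. First I would establish an analogue of Lemma~\ref{lem:7.2} for $\pi\circ D$: if $\pi(D_u) = \pi(D_v)$, then $\pi(D_{uw}) = \pi(D_{vw})$ for every $w\in {\mathsf{FI}}(X)$. This follows by induction on the length of a word representing $w$ from~\eqref{def:d} and the fact that $\pi$ is a birestriction morphism, so appending a letter $x$ or $x^{-1}$ on the right of the argument of $D$ corresponds to applying $(-\cdot x)^*$ or $(x\cdot -)^+$ inside $P(S)$. Next, for $(s,t)\in R$, the identity
\[
(x\psi(s))^* = (x^*\psi(s))^* = \psi\bigl((\psi^{-1}(x^*)\cdot s)^*\bigr)
\]
together with Proposition~\ref{prop:7.5} yields $D_{x\psi(s)} = (\psi^{-1}(x^*)\cdot s)^*$ in $P({\mathsf{FBR}}(X))$, so that
\[
\pi(D_{x\psi(s)}) = \bigl(\pi(\psi^{-1}(x^*))\cdot \pi(s)\bigr)^* = \bigl(\pi(D_x)\cdot \pi(s)\bigr)^*.
\]
The same formula holds with $t$ in place of $s$, and $\pi(s) = \pi(t)$ in $S$, so $\pi(D_{x\psi(s)}) = \pi(D_{x\psi(t)})$. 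Right-appending $y$ via the adapted lemma then gives $(\psi(s), \psi(t))\in \Theta$.

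The principal subtlety is the left--right asymmetry of~\eqref{def:d}: the recursion handles appending letters on the right but not left multiplication by arbitrary elements of ${\mathsf{FI}}(X)$. The displayed identity above circumvents this by using $\psi^{-1}$ to rewrite $D_{x\psi(s)}$ as an explicit expression in $P({\mathsf{FBR}}(X))$; its image in $P(S)$ then depends on $s$ only through $\pi(s)$, which is exactly what allows the relation $\pi(s) = \pi(t)$ to be propagated into $\Theta$.
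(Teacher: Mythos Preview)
Your argument is correct and follows essentially the same route as the paper: both reduce to the computation $\pi(D_{x\psi(s)}) = (\pi(D_x)\cdot\pi(s))^*$, which shows the value depends only on $\pi(s)$. The paper obtains this identity by writing $s=ea$ with $e\in P({\mathsf{FBR}}(X))$, $a\in X^*$ (Proposition~\ref{prop:generation}) and unwinding via Remark~\ref{rem:d} and Proposition~\ref{prop:7.5}(3), whereas you get it more directly from $D=\psi^{-1}$ on projections; note that this last fact is Theorem~\ref{th:free_restr}\eqref{i:fri5} rather than Proposition~\ref{prop:7.5}, so your citation should be adjusted.
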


\begin{proof} 
Let $u=v$ be a relation from $R$ and $s,t\in {\mathsf{FI}}(X)$. We show that the map $D$ takes $s\psi(u)t$ and $s\psi(v)t$ to the same element of $P(S)$. 

By Proposition \ref{prop:generation} every $u\in {\mathsf{FBR}}(X)$ can be written as $u=ea$ where $e$ is a projection and $a$ is a product (possibly empty) of elements of $X$. So we work with a relation $ea = fb$, where $e,f$ are projections and $a,b\in X^*$, and aim to show that
\begin{equation}\label{eq:aux:18a1}
D_{s\psi(ea)t} = D_{s\psi(fb)t}.
\end{equation}
In view of Lemma \ref{lem:7.2}, we can assume that $t=1$. Since $a,b\in X^*$ and $\psi$ acts identically on $X$, we have that $\psi(X^*)$ is canonically isomorphic to $X^*$.
We have:
\begin{align*}
D_{s\psi(ea)} = D_{s\psi(e)\psi(a)}&= (D_{s\psi(e)} a)^*&(\text{by Remark } \ref{rem:d})\\
&= (D_{s}D_{\psi(e)}a)^*& (\text{by Proposition \ref{prop:7.5}(3)})\\
&= (D_{s}ea)^* &(\text{by Theorem \ref{th:free_restr}\eqref{i:fri5}})
\end{align*}
and similarly $D_{s\psi(fb)} = (D_{s}fb)^*$. Since $ea=fb$ holds in $S$, \eqref{eq:aux:18a1} follows.
\end{proof}

The diagram below illustrates the maps $\psi$ and $D$, along with the canonical quotient maps which are presented by vertical arrows. 
\[\begin{tikzcd}
	{P({\mathsf{FBR}}(X))} && {E({\mathsf{FI}}(X))} \\
	\\
	P(S) && E(\tilde{S})
    \arrow[""', from=1-1, to=3-1]
	\arrow["", from=1-3, to=3-3]
	\arrow["\psi",  bend left=15,from=1-1, to=1-3]
	\arrow["\psi",bend left=15,from=3-1, to=3-3]
	\arrow["D", bend left=15, from=1-3, to=1-1]
    \arrow[ "D",bend left=20, from=3-3, to=3-1]
\end{tikzcd}\]

Applying Theorem \ref{th:free_restr}\eqref{i:fri5}, we obtain the following statement.
\begin{proposition}\label{prop:isom_sem_rel}
The maps $\psi\colon P(S)\to E(\tilde{S})$ and $D\colon E(\tilde{S}) \to P(S)$ are well defined and are mutually inverse  isomorphisms of semilattices.
\end{proposition}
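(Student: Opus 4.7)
The plan is to lift the isomorphism $\psi \colon P({\mathsf{FBR}}(X)) \to E({\mathsf{FI}}(X))$ of Theorem \ref{th:free_restr}\eqref{i:fri5} (with inverse $D$) through the commutative square of canonical quotient maps. Denote the vertical quotients by $\pi_F \colon P({\mathsf{FBR}}(X)) \to P(S)$ and $\pi_I \colon E({\mathsf{FI}}(X)) \to E(\tilde{S})$. The diagram preceding the proposition says precisely that $\pi_I \circ \psi = \psi \circ \pi_F$ (by functoriality of $\psi$ on the $X$-canonical quotients) and, by the lemma just proved, $\pi_F \circ D = D \circ \pi_I$ (this is the content of the well-definedness of $D$ on $\tilde S$). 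The strategy is then: verify that both maps at the relative level are semilattice morphisms, and verify the two inverse identities on the nose using lifts.

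First I would check that the two maps are semilattice morphisms. For $\psi \colon P(S) \to E(\tilde S)$ this is immediate because $\psi \colon S \to \tilde S$ is a $(2,1,1,0)$-morphism, hence sends projections to idempotents and satisfies $\psi(ef) = \psi(e)\psi(f)$ for $e,f \in P(S)$. For $D$, Corollary~\ref{cor:hom_d} gives that $D \colon E({\mathsf{FI}}(X)) \to P({\mathsf{FBR}}(X))$ is a semilattice morphism; composing with the semilattice quotient $\pi_F$ yields a semilattice morphism $E({\mathsf{FI}}(X)) \to P(S)$ which, by the well-definedness established in the previous lemma, factors through $\pi_I$ to give the required $D \colon E(\tilde S) \to P(S)$.

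Next I would check that $\psi$ and $D$ are mutually inverse. Take $e \in E(\tilde S)$ and pick any lift $\tilde e \in E({\mathsf{FI}}(X))$ with $\pi_I(\tilde e) = e$. Then
\[
\psi(D(e)) \;=\; \psi(\pi_F(D(\tilde e))) \;=\; \pi_I(\psi(D(\tilde e))) \;=\; \pi_I(\tilde e) \;=\; e,
\]
using in turn the definition of $D$ on $\tilde S$, commutativity of the square for $\psi$, and the fact that $\psi \circ D$ is the identity on $E({\mathsf{FI}}(X))$ by Theorem~\ref{th:free_restr}\eqref{i:fri5}. Symmetrically, for $p \in P(S)$ picking a lift $\tilde p \in P({\mathsf{FBR}}(X))$ with $\pi_F(\tilde p) = p$ gives
\[
D(\psi(p)) \;=\; D(\pi_I(\psi(\tilde p))) \;=\; \pi_F(D(\psi(\tilde p))) \;=\; \pi_F(\tilde p) \;=\; p,
\]
using that $D \circ \psi$ is the identity on $P({\mathsf{FBR}}(X))$. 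Hence $\psi$ and $D$ are mutually inverse bijective semilattice morphisms, i.e.\ inverse semilattice isomorphisms.

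The only part that required real work was already dispatched in the preceding lemma — namely, that $D$ respects the defining relations of $\tilde S$, for which the key input was Proposition~\ref{prop:generation} (to reduce to relations of the form $ea = fb$) together with Proposition~\ref{prop:7.5} and Remark~\ref{rem:d}. Once $D$ is known to descend to $\tilde S$, the proposition is a purely formal diagram chase, so I do not anticipate any further obstacle.
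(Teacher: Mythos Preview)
Your proposal is correct and takes essentially the same approach as the paper: the paper's proof is the one-liner ``Applying Theorem~\ref{th:free_restr}\eqref{i:fri5}, we obtain the following statement,'' which leaves implicit exactly the diagram chase you have spelled out. Your version is simply a more detailed rendering of the same idea, with the existence of idempotent/projection lifts along the quotient maps being the only (routine) point you use that the paper does not state explicitly.
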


\section{Geometric models of birestriction monoids}\label{s:geom_models}
\subsection{$E$-unitary inverse monoids via dual-closure operators on Cayley graphs of groups} \label{subs:e_unitary} Let $G$ be an $X$-generated group. The edge of the Cayley graph $\Cay(G,X)$ from the vertex $g$ to the vertex $h=g[x]_G$ labeled by $x\in X\cup X^{-1}$ will be denoted by $(g,x,h)$. Let ${\mathcal X}_{X}$ be the set of all finite and connected subgraphs of $\Cay(G,X)$ containing the origin. This is a semilattice with $A \leq B$ if and only if $A\supseteq B$, its top element is the graph $\Gamma_1$ with only one vertex, $1$, and no edges. The {\em Margolis-Meakin expansion} \cite{MM89} $M(G,X)$ of $G$ is the inverse monoid 
$$
M(G,X)  = \{(\Gamma,g)\colon \Gamma \in {\mathcal X}_{X}, g \text{ is a vertex of }\Gamma\}
$$
with the identity element $(\Gamma_1,1)$ and the operations of the multiplication and inversion given by 
$$
(A,g)(B,h) = (A\cup gB, gh), \,\,\, (A,g)^{-1} = (g^{-1}A, g^{-1}).
$$
It is well known (see, e.g., \cite[Proposition 2.1(6)]{KLF24}) that $M(G,X)$ decomposes as a partial action product
\begin{equation}\label{eq:19a1}
M(G,X) = {\mathcal X}_X \rtimes G,
\end{equation}
where $G$ act partially on ${\mathcal X}_X$ so that $g\circ \Gamma$ is defined if and only if $g^{-1}$ is a vertex of $\Gamma$ in which case $g\circ \Gamma$ is the {\em left translation} $g\Gamma$ of $\Gamma$ by $g$.

The underlying premorphism $\varphi\colon G\to \Sigma(E(M(G,X)))$ of $M(G,X)$ is given as follows. For $g\in G$ we have that ${\mathrm{dom}}(\varphi_g)$ consists of all $(\Gamma, 1)$ for which there is $(\Gamma',g)\in M(G,X)$ such that $(\Gamma, 1) \leq (\Gamma',g)^* = (g^{-1}\Gamma',g^{-1})(\Gamma',g) = (g^{-1}\Gamma',1)$, which holds if and only if  $\Gamma'\subseteq g\Gamma$. Since $\Gamma'$ must contain the origin, $\Gamma$ must contain $g^{-1}$ as a vertex. If the latter holds one can put $\Gamma'=g\Gamma$. So ${\mathrm{dom}}(\varphi_g)$ consists of all $(\Gamma,1)\in E(M(G,X))$ where $\Gamma$ has $g^{-1}$ as a vertex. For such a pair $(\Gamma,1)$ we have that $\varphi_g(\Gamma,1) = ((g\Gamma,g)(\Gamma,1))^+ = (g\Gamma,g)^+ = (g\Gamma,1)$. So  $\varphi_g$ just performs the partial action by left translation by $g$ on the first component of $(\Gamma,1)$, described just after \eqref{eq:19a1}. This means that the underlying partial action of $M(G,X)$ is {\em equivalent} to the partial action of $G$ on ${\mathcal X}_X$ by left translations via the isomorphism $E(M(G,X))\to {\mathcal X}_X$, $(\Gamma,1)\mapsto \Gamma$.

Furthermore, $M(G,X)$ is $X$-generated via the assignment map $x\mapsto (\Gamma_x,[x]_G)$ where $\Gamma_x$ is the graph which has two vertices, $1$ and $[x]_G$, and one positive edge $(1,x,[x]_G)$. The universal property of $M(G,X)$ says that for any $X$-generated $E$-unitary inverse monoid $S$ such that $S/\sigma$ is canonically isomorphic to $G$, we have that $S$ is a canonical quotient of $M(G,X)$.
The defining relations of $M(G, X)$ (see \cite[Corollary 2.9]{MM89}) are
\begin{equation}\label{eq:i2}
[u]_{M(G, X)}^2 = [u]_{M(G, X)} \text{ whenever }
[u]_{G} = 1,
\end{equation}
where $u$ runs through $(X\cup X^{-1})^*$. 

For a congruence $\rho$ on  $M(G,X)$ which satisfies the condition 
\begin{equation}\label{eq:e_unitary}
[u]_{M(G,X)} \mathrel{\rho} [v]_{M(G,X)} \Longrightarrow  [u]_{G} = [v]_{G}, \text{ for all } u,v\in (X\cup X^{-1})^*,
\end{equation}
we put $M_{\rho}(G,X) = M(G,X)/\rho$.
Since the canonical quotient morphism $M(G,X)\to G$ factors through $M_{\rho}(G,X)$, we have $M_{\rho}(G,X)/\sigma \simeq G$. 
We obtain the following. 

\begin{proposition}\label{prop:quotient2}
Let $G$ be an $X$-generated group. For an $X$-generated inverse monoid $S$ the following statements are equivalent:
\begin{enumerate}
\item $S$ is a canonical quotient of $M(G,X)$ and $S/\sigma$ is canonically isomorphic to $G$.
\item $S\simeq M_{\rho}(G,X)$ for some congruence $\rho$ on $M(G,X)$ satisfying \eqref{eq:e_unitary}.
\item $S$ is $E$-unitary and $S/\sigma$ is canonically isomorphic to $G$.
\end{enumerate}
\end{proposition}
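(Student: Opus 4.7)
The plan is to prove the cycle $(1) \Rightarrow (2) \Rightarrow (3) \Rightarrow (1)$, invoking respectively a straightforward congruence-theoretic bookkeeping, the defining relation \eqref{eq:i2} of $M(G,X)$, and the universal property of the Margolis--Meakin expansion recalled just before the proposition.

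For $(1) \Rightarrow (2)$, I would write $S \simeq M(G,X)/\rho$ for the kernel congruence $\rho$ of the given $X$-canonical quotient map. To check that $\rho$ satisfies \eqref{eq:e_unitary}, the idea is to trace the $X$-canonical composition $M(G,X) \twoheadrightarrow S \twoheadrightarrow S/\sigma \simeq G$, which sends each word $w \in (X \cup X^{-1})^*$ to $[w]_G$. If $[u]_{M(G,X)} \mathrel{\rho} [v]_{M(G,X)}$, their images coincide in $S$, hence in $S/\sigma$, hence in $G$; that is, $[u]_G = [v]_G$.

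For $(2) \Rightarrow (3)$, the identification $M_\rho(G,X)/\sigma \simeq G$ (canonically) was already observed in the paragraph preceding the proposition. For $E$-unitarity, I would suppose that $t \in M_\rho(G,X)$ is $\sigma$-related to an idempotent and pick any word $u \in (X \cup X^{-1})^*$ representing $t$. Then the image of $t$ in $G$ is $1$, so $[u]_G = 1$. By \eqref{eq:i2}, $[u]_{M(G,X)}$ is already idempotent in $M(G,X)$, and idempotency is preserved by the quotient map, so $t$ is idempotent in $M_\rho(G,X)$.

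For $(3) \Rightarrow (1)$, this is precisely the universal property of $M(G,X)$ recalled before the proposition: any $X$-generated $E$-unitary inverse monoid whose maximum group image is canonically $G$ factors canonically through $M(G,X)$. There is no genuine obstacle here; the only care needed is to keep track of which $X$-canonical maps are being composed and to recognize \eqref{eq:i2} as the defining identity that promotes the condition ``image $1$ in $G$'' to ``already idempotent in $M(G,X)$.''
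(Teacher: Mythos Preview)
Your proposal is correct and follows essentially the same approach as the paper. The paper does not give an explicit proof of this proposition but states it as an immediate consequence of the preceding discussion (the universal property of $M(G,X)$, the defining relations \eqref{eq:i2}, and the observation that $M_\rho(G,X)/\sigma \simeq G$); your cycle $(1)\Rightarrow(2)\Rightarrow(3)\Rightarrow(1)$ simply makes these implicit steps explicit, using precisely the ingredients the paper has laid out.
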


Let ${\mathcal X}_X^{c}$ be the semilattice of all connected (and not necessarily finite) subgraphs of the Cayley graph $\Cay(G,X)$ which contain the origin with the order given by $A\leq B$ if and only if $A\supseteq B$. A {\em dual-closure operator} $j\colon {\mathcal X}_X^{c} \to {\mathcal X}_X^{c}$ is a function which is:
\begin{itemize}
\item {\em contracting}, that is, $j(\Gamma) \leq \Gamma$, for all $\Gamma \in {\mathcal X}_X^{c}$,
\item {\em monotone}, that is, $\Gamma_1\leq \Gamma_2$ implies $j(\Gamma_1) \leq j(\Gamma_2)$, for all $\Gamma_1, \Gamma_2 \in {\mathcal X}_X^{c}$,
\item {\em idempotent}, that is, $j^2(\Gamma) = j(\Gamma)$, for all $\Gamma \in {\mathcal X}_X^{c}$.
\end{itemize}

We say that a graph $\Gamma \in {\mathcal X}_X^{c}$ is {\em $\rho$-closed} if whenever $[u]_{M(G,X)} \mathrel{\rho} [v]_{M(G,X)}$,
where $u,v\in (X\cup X^{-1})^*$,
we have that for any two vertices $\alpha, \beta$ of $\Gamma$, the graph $\Gamma$ has a path from $\alpha$ to $\beta$ labeled by $u$ if and only if it has a path from $\alpha$ to $\beta$ labeled by $v$.

\begin{lemma}\label{lem:4j1}
Let $\rho$ be a congruence on $M(G,X)$ satisfying \eqref{eq:e_unitary} and $\Gamma\in {\mathcal X}_X^{c}$. Then there is a unique minimal $\rho$-closed graph $\Gamma_{\rho}\in {\mathcal X}_X^{c}$ which contains $\Gamma$. 
\end{lemma}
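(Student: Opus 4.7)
The plan is to construct the minimal $\rho$-closed supergraph $\Gamma_\rho$ by an iterative saturation starting from $\Gamma$, and to verify minimality directly; uniqueness then follows automatically.

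The key fact I shall use throughout is that for every $u \in (X\cup X^{-1})^*$ and every vertex $\alpha$ of $\Cay(G,X)$ there is a unique path in $\Cay(G,X)$ starting at $\alpha$ and labeled by $u$; it terminates at $\alpha[u]_G$, and its vertices and edges are prescribed. Hence whether a subgraph of $\Cay(G,X)$ contains a $u$-path from $\alpha$ to $\beta$ is determined by $\beta = \alpha[u]_G$ together with the presence of a finite explicit list of edges. By \eqref{eq:e_unitary}, if $[u]_{M(G,X)}\mathrel{\rho}[v]_{M(G,X)}$ then $\alpha[u]_G = \alpha[v]_G$, so the $u$- and $v$-paths from $\alpha$ end at the same vertex. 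Since $\rho$ is symmetric, it is enough to verify the one-way implication in the definition of $\rho$-closedness.

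Recursively define $\Gamma_0 := \Gamma$ and let $\Gamma_{n+1}$ be obtained from $\Gamma_n$ by adjoining, for every pair $(u,v)$ with $[u]_{M(G,X)}\mathrel{\rho}[v]_{M(G,X)}$ and every pair of vertices $\alpha,\beta$ of $\Gamma_n$ such that the $u$-path from $\alpha$ to $\beta$ lies entirely in $\Gamma_n$, all vertices and edges of the (unique) $v$-path from $\alpha$ to $\beta$. An easy induction gives $\Gamma_n \in {\mathcal X}_X^c$, since at each stage finitely many paths with endpoints already in the connected graph $\Gamma_n$ are adjoined. Set $\Gamma_\rho := \bigcup_{n\ge 0} \Gamma_n$. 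As an ascending union of connected subgraphs of $\Cay(G,X)$ all containing the origin, $\Gamma_\rho$ is itself connected and contains the origin, and $\Gamma \subseteq \Gamma_\rho$.

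To see that $\Gamma_\rho$ is $\rho$-closed, suppose $[u]_{M(G,X)}\mathrel{\rho}[v]_{M(G,X)}$ and the $u$-path from $\alpha$ to $\beta$ is contained in $\Gamma_\rho$; since this path is finite, some $\Gamma_N$ already contains all its vertices and edges, so by construction the $v$-path from $\alpha$ to $\beta$ lies in $\Gamma_{N+1} \subseteq \Gamma_\rho$. For minimality, let $\Gamma'\in{\mathcal X}_X^c$ be any $\rho$-closed supergraph of $\Gamma$. A short induction on $n$ shows $\Gamma_n \subseteq \Gamma'$: the base case is immediate, and for the induction step, each $u$-path appearing in $\Gamma_n \subseteq \Gamma'$ forces the corresponding $v$-path to lie in $\Gamma'$ by $\rho$-closedness. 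Therefore $\Gamma_\rho\subseteq\Gamma'$, giving both minimality and uniqueness.

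I do not anticipate a serious obstacle in this argument. The only delicate point is to avoid the naive attempt to define $\Gamma_\rho$ as the intersection of all $\rho$-closed supergraphs of $\Gamma$, which would fail because an intersection of connected subgraphs all containing the origin need not be connected; the bottom-up iterative construction sidesteps this precisely by staying inside ${\mathcal X}_X^c$ at every step.
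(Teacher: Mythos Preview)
Your argument is correct. One small slip: at each stage you may adjoin infinitely many paths (both $\Gamma_n$ and the set of $\rho$-related pairs $(u,v)$ can be infinite), not ``finitely many'' as you write; this is harmless, since every adjoined path has both endpoints in the already connected $\Gamma_n$, which is all that is needed for $\Gamma_{n+1}\in\mathcal{X}_X^c$.

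Your route, however, is genuinely different from the paper's. You build $\Gamma_\rho$ bottom-up by iterated saturation. The paper takes exactly the top-down route you caution against: it first observes that $\Cay(G,X)$ itself is $\rho$-closed, then intersects all $\rho$-closed members of $\mathcal{X}_X^c$ containing $\Gamma$ and passes to the connected component of this intersection that contains $\Gamma$. That component is easily seen to be $\rho$-closed (any forced $v$-path lies in the intersection and, being a connected set meeting the component, lies in the component), hence is itself one of the intersected graphs; a~posteriori the intersection was already connected. Your approach is more constructive and supplies explicit approximations to $\Gamma_\rho$, which can be useful when one actually wants to compute closures; the paper's approach is shorter and shows incidentally that the intersection of all $\rho$-closed supergraphs in $\mathcal{X}_X^c$ is itself in $\mathcal{X}_X^c$. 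So your closing caveat is slightly overstated: the intersection approach does not fail outright, it only needs the connected-component patch.
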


\begin{proof}
Note that $\Cay(G,X)$ has a path from a vertex $\alpha$ to a vertex $\beta$ labeled by $u\in (X\cup X^{-1})^*$ precisely when $\beta=\alpha[u]_{G}$. If  $[u]_{M(G,X)} \mathrel{\rho} [v]_{M(G,X)}$ then $\alpha[u]_{G} = \alpha[v]_G$, so that $\Cay(G,X)$ is $\rho$-closed and contains $\Gamma$. Furthermore, if $\Gamma_i \in {\mathcal X}_X^{c}$, $i\in I$, is the list of all $\rho$-closed subgraphs of ${\mathcal X}_X^{c}$ which contain $\Gamma$, then the connected component $\Gamma_{\rho}$ of $\cap_{i\in I} \Gamma_i$ which contains $\Gamma$ is a $\rho$-closed connected subgraph, so there is $i$ such that $\Gamma_{\rho}=\Gamma_i$, and the statement follows.
\end{proof}

It is routine to verify that the assignment $\Gamma \mapsto \Gamma_{\rho}$ defines a dual-closure operator on ${\mathcal X}_X^{c}$, which we denote by $\bar{\rho}$.
Moreover, we have that
$$
(\Gamma_1,g_1) \mathrel{\rho} (\Gamma_2,g_2) \iff g_1=g_2 \text{ and } \bar{\rho}(\Gamma_1) = \bar{\rho}(\Gamma_2).
$$
It is easy to see that $\bar{\rho}({\mathcal X}_X)$ is a semilattice and that $G$ acts partially on it by left translations. 
This implies that the map
$M(G,X) \to \bar{\rho}({\mathcal X}_X) \rtimes G$, given by 
$(\Gamma, g) \mapsto (\bar{\rho}(\Gamma), g)$, is a monoid morphism with kernel $\rho$. It follows that 
\begin{equation}\label{eq:closure_on_M}
M_{\rho}(G,X) \simeq \bar{\rho}({\mathcal X}_X) \rtimes G.
\end{equation}
A slightly different, but equivalent, approach to the construction of $M_{\rho}(G,X)$ was recently suggested by Szak\'{a}cs in \cite{Sz24}.

\subsection{Birestriction monoids which admit a geometric model} 
Let $S$ be an $X$-generated birestriction monoid given by the presentation $S = {\mathrm{BRestr}} \langle X \mathrel{\vert} R\rangle$
and $\tilde{S}$ be its universal inverse monoid (see \eqref{eq:univ_inv}). 
It follows from Lemma \ref{lem:generation2a} that the group $\tilde{S}/\sigma$ is the universal group of the $X$-generated monoid $S/\sigma$. That is, the identical map on  $X$ extends to an $X$-canonical monoid morphism $\psi \colon S/\sigma \to \tilde{S}/\sigma$ and every $X$-canonical morphism from $S/\sigma$ to an $X$-generated group canonically factors  through $\tilde{S}/\sigma$. 

The following statements deal with proper birestriction semigroups, see Definition \ref{def:proper}.

\begin{lemma} \label{lem:sigma_classes} 
If $S$ is proper, the morphism $\psi\colon S \to \tilde{S}$ is injective on $\sigma$-classes of $S$.
\end{lemma}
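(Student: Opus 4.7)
The plan is to reduce the statement to the injectivity of $\psi$ on the projection semilattice, which is available to us via Proposition \ref{prop:isom_sem_rel}, and then invoke the definition of properness.

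First, I would fix $s,t \in S$ lying in the same $\sigma$-class and satisfying $\psi(s) = \psi(t)$ in $\tilde{S}$, and aim to show that $s = t$. Since $\psi\colon S \to \tilde{S}$ is a $(2,1,1,0)$-morphism, it commutes with the unary operation ${}^*$, giving $\psi(s^*) = \psi(s)^* = \psi(t)^* = \psi(t^*)$.

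Next, I would observe that $s^*, t^* \in P(S)$, so they lie in the domain on which, by Proposition \ref{prop:isom_sem_rel}, the restriction $\psi|_{P(S)}\colon P(S) \to E(\tilde S)$ is an isomorphism of semilattices; in particular it is injective. Hence $\psi(s^*) = \psi(t^*)$ forces $s^* = t^*$.

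Finally, since $s \mathrel{\sigma} t$ and $s^* = t^*$, the assumption that $S$ is proper (Definition \ref{def:proper}(1)) immediately yields $s = t$, completing the argument. There is no real obstacle here: the whole content is the observation that $\psi$ is already known to be bijective on projections (Proposition \ref{prop:isom_sem_rel}), and properness converts equality of restrictions ${}^*$ inside a $\sigma$-class into equality of elements.
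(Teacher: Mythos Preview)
Your proof is correct and follows essentially the same approach as the paper's: both reduce to injectivity of $\psi$ on projections via Proposition~\ref{prop:isom_sem_rel}, then invoke properness. The only cosmetic difference is that the paper uses ${}^+$ and condition~(2) of Definition~\ref{def:proper}, whereas you use ${}^*$ and condition~(1); either works.
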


\begin{proof} 
Suppose $a\mathrel{\sigma} b$ and $\psi(a)=\psi(b)$. Then $\psi(a)^+ = \psi(b)^+$ and by Proposition \ref{prop:isom_sem_rel} we have $a^+=b^+$. Since $S$ is proper, we conclude that $a=b$.
\end{proof}

\begin{proposition}\label{prop:injective}\mbox{}
\begin{enumerate}
\item  \label{i:pr1} If the morphism $\psi\colon S\to \tilde{S}$ is injective then so is the morphism $\psi \colon S/\sigma \to \tilde{S}/\sigma$.
\item \label{i:pr2} Suppose $S$ is proper. If the morphism $\psi \colon S/\sigma \to \tilde{S}/\sigma$ is injective then so is the morphism $\psi\colon S\to \tilde{S}$.
\end{enumerate}
\end{proposition}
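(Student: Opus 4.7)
The plan is to reduce both implications to results already established: Proposition \ref{prop:isom_sem_rel} (which says $\psi$ restricts to a semilattice isomorphism $P(S) \to E(\tilde S)$) and Lemma \ref{lem:sigma_classes} (which says that on proper $S$, the morphism $\psi$ is injective on each $\sigma$-class). The key observation is that $\sigma$-relatedness is witnessed by multiplication by projections, and projections are exactly where we have full control via Proposition \ref{prop:isom_sem_rel}.

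For part \eqref{i:pr1}, I would argue as follows. Assume $\psi\colon S \to \tilde S$ is injective, and suppose $a,b \in S$ satisfy $\psi([a]_\sigma) = \psi([b]_\sigma)$ in $\tilde S/\sigma$, that is, $\psi(a) \mathrel{\sigma} \psi(b)$ in $\tilde S$. Then there exists $e \in E(\tilde S)$ with $e\psi(a) = e\psi(b)$. By Proposition \ref{prop:isom_sem_rel}, there is a unique $f \in P(S)$ with $\psi(f) = e$, hence $\psi(fa) = \psi(f)\psi(a) = \psi(f)\psi(b) = \psi(fb)$. Injectivity of $\psi$ on $S$ forces $fa = fb$, and hence $a \mathrel{\sigma} b$, so $[a]_\sigma = [b]_\sigma$ in $S/\sigma$, as required.

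For part \eqref{i:pr2}, assume $S$ is proper and that $\psi\colon S/\sigma \to \tilde S/\sigma$ is injective. Suppose $a,b \in S$ satisfy $\psi(a) = \psi(b)$. Passing to $\sigma$-classes in $\tilde S$ yields $\psi([a]_\sigma) = [\psi(a)]_\sigma = [\psi(b)]_\sigma = \psi([b]_\sigma)$, so by injectivity of the induced map on quotients, $[a]_\sigma = [b]_\sigma$; that is, $a$ and $b$ lie in the same $\sigma$-class of $S$. Lemma \ref{lem:sigma_classes}, which applies because $S$ is proper, then gives $a = b$.

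I do not anticipate a serious obstacle: both implications reduce to mechanically applying Proposition \ref{prop:isom_sem_rel} (to lift the idempotent witnessing $\sigma$ in $\tilde S$ back to a projection of $S$) and Lemma \ref{lem:sigma_classes} (to upgrade $\sigma$-equivalence in $S$ to equality). The only point that warrants care is checking that the map $\psi$ on $\sigma$-classes is well defined, which is automatic since $\psi$ is a $(2,1,1,0)$-morphism and therefore sends $\sigma$-related elements of $S$ to $\sigma$-related elements of $\tilde S$.
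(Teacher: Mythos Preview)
Your proof is correct and follows essentially the same route as the paper's: in part~\eqref{i:pr1} you lift the idempotent witnessing $\sigma$ in $\tilde S$ back to a projection of $S$ via Proposition~\ref{prop:isom_sem_rel} and use injectivity of $\psi$, and in part~\eqref{i:pr2} you combine injectivity on $\sigma$-classes (Lemma~\ref{lem:sigma_classes}) with injectivity of the induced map on quotients. The only cosmetic differences are that the paper multiplies by the idempotent on the right and uses the notation $D_e$ for your $f$.
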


\begin{proof}
(1) Suppose $\psi\colon S \to \tilde{S}$ is injective and let $a,b\in S$ be such that $\psi(a) \mathrel{\sigma} \psi(b)$. This means that $\psi(a)e=\psi(b)e$ for some $e\in E(\tilde{S})$. Since $e=\psi(D_e)$ by Proposition \ref{prop:isom_sem_rel}, it follows that
$\psi(a)\psi(D_e)=\psi(b)\psi(D_e)$ which is equivalent to $\psi(a D_e)=\psi(bD_e)$. Since $\psi$ is injective, it follows that $a D_e=bD_e$, so that $a \mathrel{\sigma} b$, as needed.

(2) Since $\psi\colon S/\sigma \to \tilde{S}/\sigma$ is injective, it takes elements from distinct $\sigma$-classes to distinct $\sigma$-classes. 
This and Lemma \ref{lem:sigma_classes} imply that $\psi\colon S\to \tilde{S}$ is injective.
\end{proof}

\begin{proposition}\label{prop:e_unitary1}
A birestriction semigroup, which $(2,1,1)$-embeds into an $E$-unitary inverse semigroup, is proper.
\end{proposition}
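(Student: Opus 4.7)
My plan is to invoke the characterization recorded just after Definition~\ref{def:proper} that a birestriction semigroup is proper if and only if the relations $\sigma$ and $\asymp$ coincide. Since $\asymp \subseteq \sigma$ in any birestriction semigroup, the task reduces to showing that $a \mathrel{\sigma}_S b$ implies $a \asymp_S b$ for all $a,b \in S$, where $S$ is the given birestriction semigroup and $\iota\colon S \hookrightarrow T$ is the $(2,1,1)$-embedding into an $E$-unitary inverse semigroup $T$.

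The key preparatory observation is that when $T$ is viewed as a birestriction semigroup via $t^* = t^{-1}t$ and $t^+ = tt^{-1}$, the defining conditions of the birestriction compatibility $\asymp_T$, namely $ab^* = ba^*$ and $a^+b = b^+a$, translate into $ab^{-1}b = ba^{-1}a$ and $aa^{-1}b = bb^{-1}a$, which is well known to be equivalent to the classical inverse-semigroup compatibility $ab^{-1}, a^{-1}b \in E(T)$. Combined with the classical characterization of $E$-unitary inverse semigroups as exactly those in which $\sigma$ coincides with compatibility, this yields $\sigma_T = \asymp_T$.

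Now suppose $a \mathrel{\sigma}_S b$, so there exists $e \in P(S)$ with $ea = eb$. Since $\iota$ preserves products, $\iota(e)$ is an idempotent of $T$, and as $T$ is inverse we have $E(T) = P(T)$, so $\iota(e) \in P(T)$. Applying $\iota$ to $ea = eb$ gives $\iota(a) \mathrel{\sigma}_T \iota(b)$, whence $\iota(a) \asymp_T \iota(b)$ by the previous paragraph. This means
\[
\iota(a)\iota(b)^* = \iota(b)\iota(a)^* \quad\text{and}\quad \iota(a)^+ \iota(b) = \iota(b)^+\iota(a).
\]
Because $\iota$ preserves $^*$, $^+$ and multiplication and is injective, we conclude that $ab^* = ba^*$ and $a^+b = b^+a$, i.e., $a \asymp_S b$, as required.

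The only nontrivial step is the translation between birestriction $\asymp_T$ and the classical inverse-semigroup compatibility relation in $T$, which is elementary but needs a line of verification; everything else is bookkeeping through the embedding, relying essentially on its injectivity and on the preservation of $^*$ and $^+$.
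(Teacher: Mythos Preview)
Your proof is correct and parallels the paper's in structure: both push the relation $\sigma_S$ to the ambient inverse semigroup, exploit $E$-unitariness there, and pull back via injectivity of the embedding. The only difference is which characterization of properness is verified. The paper checks Definition~\ref{def:proper} directly: assuming $s\mathrel{\sigma}t$ and $s^*=t^*$, it observes that $f(s)\mathrel{\sigma}f(t)$ and $f(s)^*=f(t)^*$ in the $E$-unitary target, so $f(s)=f(t)$ (since $E$-unitary inverse semigroups are proper), whence $s=t$. You instead verify the equivalent criterion $\sigma=\asymp$, which costs you the extra translation between birestriction bicompatibility and classical inverse-semigroup compatibility. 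Both arguments are short; the paper's route is marginally more direct because it sidesteps that translation, while yours makes the role of the compatibility relation explicit.
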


\begin{proof}
Let $T$ be a birestriction semigroup, $T'$ an $E$-unitary inverse semigroup and $f\colon T\to T'$ a $(2,1,1)$-embedding.
Suppose that $s\mathrel{\sigma} t$ in $T$ and $s^*=t^*$. Then there is $u\in T$ such that $u\leq s,t$. It follows that $f(u) \leq f(s), f(t)$ so that $f(s)\mathrel{\sigma} f(t)$. Since, moreover, $f(s)^* = f(t)^*$, it follows that $f(s) = f(t)$ as $T'$ is $E$-unitary. Then also $s=t$ as $f$ is injective. Similarly, if $s\mathrel{\sigma} t$ in $T$ and $s^+=t^+$, we also get $s=t$. It follows that $T$ is proper. 
\end{proof}

Since $\tilde{S}$ is an $X$-generated inverse monoid with the maximal group image (isomorphic to) the $X$-generated group $\tilde{S}/\sigma$,
the universal property of $M(\tilde{S}/\sigma, X)$ implies that $\tilde{S}$ is $E$-unitary if and only if it is a quotient of $M(\tilde{S}/\sigma, X)$.
If this is the case, \eqref{eq:closure_on_M} implies that
\begin{equation}\label{eq:action22}
\tilde{S} \simeq \bar{\rho}({\mathcal X}_{X}) \rtimes \tilde{S}/\sigma,
\end{equation}
where ${\mathcal X}_{X}$ is the semilattice of all connected subgraphs of $\Cay(\tilde{S}/\sigma, X)$ which contain the origin and $\rho$ is the congruence on $M(\tilde{S}/\sigma, X)$ such that $M(\tilde{S}/\sigma, X)/\rho \simeq \tilde{S}$. Note that
\begin{equation}\label{eq:a22d}
\bar{\rho}({\mathcal X}_{X}) \simeq E(\tilde{S}).
\end{equation}

Suppose that $\tilde{S}$ is $E$-unitary and the morphism $\psi \colon S\to \tilde{S}$  is injective. By Proposition \ref{prop:injective} we have that $\psi \colon S/\sigma \to \tilde{S}/\sigma$ is injective so that $S/\sigma \simeq \psi(S/\sigma)$. Proposition \ref{prop:e_unitary1} implies that $S$ is proper, so that $\psi(S)$ is a proper birestriction submonoid of $\tilde{S}$ which is isomorphic to $S$. The isomorphism $\psi\colon S\to \psi(S)$ restricts to the isomorphism $P(S)\to P(\psi(S))$. On the other hand, Proposition \ref{prop:isom_sem_rel} gives us that $\psi$ restricts to the isomorphism $P(S)\to E(\tilde{S})$. Hence $P(\psi(S)) = E(\tilde{S})$. It follows from Subsection \ref{subs:proper_structure} that the underlying premorphism $\varphi$ of $\psi(S)$ is given as follows. For $t\in \psi(S/\sigma)$ we have that 
\begin{equation}\label{eq:action1a}
{\mathrm{dom}}(\varphi_t) = \{e\in E(\tilde{S})\colon \text{there is } s\in \psi(S) \text{ such that } \sigma^{\natural}(s) = t \text{ and } e\leq s^*\}.
\end{equation}
If $e\in {\mathrm{dom}}(\varphi_t)$ then $\varphi_t(e) = (se)^+$ where $s\in \psi(S)$ is any element such that $\sigma^{\natural}(s) = t$ and $e\leq s^*$. By Theorem \ref{thm:proper} we have that $\psi(S)\simeq E(\tilde{S}) \rtimes_{\varphi} \psi(S/\sigma)$. 
Further, the underlying premorphism $\tilde{\varphi}$ of $\tilde{S}$ is given as follows. For $g\in \tilde{S}/\sigma$ and $e\in E(\tilde{S})$ we have that
\begin{equation}\label{eq:action1b}
{\mathrm{dom}}(\tilde\varphi_g) = \{e\in E(\tilde{S})\colon \text{there is } h\in \tilde{S} \text{ such that } \sigma^{\natural}(h) = g \text{ and } e\leq h^*\}.
\end{equation}
If $e\in {\mathrm{dom}}(\tilde\varphi_t)$ then $\tilde\varphi_t(e) = (he)^+$ where $h\in \tilde{S}$ is any element such that $\sigma^{\natural}(h) = g$ and $e\leq h^*$. 
By Theorem \ref{thm:proper} we have that $\tilde{S} \simeq E(\tilde{S}) \rtimes_{\tilde{\varphi}} \tilde{S}/\sigma$. Comparing the definitions of $\varphi$ and $\tilde{\varphi}$, we see that if $t\in \psi(S/\sigma)$ then \begin{equation}\label{eq:domains}
{\mathrm{dom}}(\varphi_t)\subseteq {\mathrm{dom}}(\tilde{\varphi}_t)
\end{equation} 
and for all $e\in {\mathrm{dom}}(\varphi_t)$ we have that $\varphi_t(e) = \tilde{\varphi_t}(e)$.
Restricting $\tilde{\varphi}$ from $\tilde{S}/\sigma$ to $\psi(S/\sigma)$, we obtain a partial action product 
$E(\tilde{S}) \rtimes_{\tilde{\varphi}} \psi(S/\sigma)$. Our considerations imply the following.
\begin{lemma} \label{lem:varphi_tilde}
$\psi(S) \simeq E(\tilde{S}) \rtimes_{\varphi} \psi(S/\sigma)$ which is a $(2,1,1,0)$-subalgebra of $E(\tilde{S}) \rtimes_{\tilde{\varphi}} \psi(S/\sigma)$. Moreover, $E(\tilde{S}) \rtimes_{\varphi} \psi(S/\sigma) = E(\tilde{S}) \rtimes_{\tilde\varphi} \psi(S/\sigma)$ if and only if for every $t\in \psi(S/\sigma)$ we have that ${\mathrm{dom}}(\varphi_t) = {\mathrm{dom}}(\tilde{\varphi}_t)$.
\end{lemma}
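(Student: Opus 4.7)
The first assertion is an instance of Theorem \ref{thm:proper}. Since $\psi$ embeds $S$ into the inverse monoid $\tilde{S}$, Proposition \ref{prop:e_unitary1} yields that $\psi(S)$ is proper. The discussion preceding the statement has already identified $P(\psi(S))$ with $E(\tilde{S})$ and $\psi(S)/\sigma$ with $\psi(S/\sigma)$, and by the definition in \eqref{eq:action1a} the map $\varphi$ is precisely the underlying premorphism of $\psi(S)$. Applying Theorem \ref{thm:proper} to $\psi(S)$ therefore gives the desired $(2,1,1,0)$-isomorphism $\psi(S) \simeq E(\tilde{S}) \rtimes_{\varphi} \psi(S/\sigma)$.

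For the subalgebra assertion, the key observation, already extracted in \eqref{eq:domains} and the line following it, is that $\dom(\varphi_t) \subseteq \dom(\tilde{\varphi}_t)$ and that $\tilde{\varphi}_t$ restricts to $\varphi_t$ on $\dom(\varphi_t)$ for every $t \in \psi(S/\sigma)$. Passing to inverses in $\Sigma(E(\tilde{S}))$, this also forces $\ran(\varphi_t) \subseteq \ran(\tilde{\varphi}_t)$ and $\varphi_t^{-1}$ to be a restriction of $\tilde{\varphi}_t^{-1}$. Hence the underlying set of $E(\tilde{S}) \rtimes_{\varphi} \psi(S/\sigma)$ is contained in that of $E(\tilde{S}) \rtimes_{\tilde{\varphi}} \psi(S/\sigma)$, both contain the identity element $(1,1)$, and inspection of the formulas in \eqref{eq:multiplication} shows that the two sets of operations $\cdot,\,^*,\,^+$ produce the same value whenever they are both applied to elements of the smaller set. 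Closure of the smaller set under these operations is guaranteed by the isomorphism established in the previous paragraph, which exhibits $E(\tilde{S}) \rtimes_{\varphi} \psi(S/\sigma)$ as a birestriction monoid in its own right.

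For the ``moreover'' part, the two semidirect products coincide as sets if and only if $\ran(\varphi_t) = \ran(\tilde{\varphi}_t)$ for every $t \in \psi(S/\sigma)$. Since $\tilde{\varphi}_t \colon \dom(\tilde{\varphi}_t) \to \ran(\tilde{\varphi}_t)$ is an order isomorphism which agrees with $\varphi_t$ on $\dom(\varphi_t)$, we have $\tilde{\varphi}_t(\dom(\varphi_t)) = \ran(\varphi_t)$. The injectivity of $\tilde{\varphi}_t$ then yields $\ran(\varphi_t) = \ran(\tilde{\varphi}_t)$ precisely when $\dom(\varphi_t) = \dom(\tilde{\varphi}_t)$, which is the stated condition; coincidence of the operations on equal underlying sets is then automatic from the previous paragraph. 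The only conceptually delicate point in the whole argument is the identification of $\varphi$ with the restriction of $\tilde{\varphi}$ to $\psi(S/\sigma)$, and this has already been carried out in the paragraphs preceding the statement; the rest is straightforward bookkeeping with the formulas for partial action products.
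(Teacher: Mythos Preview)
Your argument is correct and is exactly the elaboration the paper intends: the lemma is stated immediately after the phrase ``Our considerations imply the following'' and is given no separate proof, so the paper is treating the preceding paragraphs (the identification $P(\psi(S))=E(\tilde S)$, the definition of $\varphi$ and $\tilde\varphi$, the inclusion \eqref{eq:domains}, and Theorem~\ref{thm:proper}) as the proof. You have simply spelled out those considerations in detail, including the passage from equality of domains to equality of ranges via injectivity of $\tilde\varphi_t$, which is the only point not entirely explicit in the text.
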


Since the partial actions of $\tilde{S}/\sigma$ on $\bar{\rho}({\mathcal X}_{X})$ and on $E(\tilde{S})$ are equivalent (see Subsection \ref{subs:e_unitary}), Lemma \ref{lem:varphi_tilde} captures the embedding of $S$ into $\tilde{S}$ geometrically. We have motivated the following definition.

\begin{definition} (Geometric model)
Let $S$ be an $X$-generated birestriction monoid. We say that $S$ admits a {\em geometric model} if $S$ canonically $(2,1,1,0)$-embeds into an $X$-generated $E$-unitary inverse monoid.\footnote{While our notion of a geometric model is natural for birestriction monoids, there are instances of other classes of unary and biunary semigroups in the literature which admit geometrically arising models of different kinds (see, e.g., \cite{GG00, G96, HKSz25, Kam11}).}
\end{definition}

Theorem \ref{th:structure:f_restr} implies that ${\mathsf{FBR}}(X)$ admits a geometric model. In view of Proposition \ref{prop:e_unitary1}, if $S$ admits a geometric model, it is necessarily proper.

\begin{proposition} \label{prop:model} 
Let $S$ be an $X$-generated birestriction monoid. 
\begin{enumerate}
\item  $S$ admits a geometric model if and only if $\psi\colon S\to \tilde{S}$ is injective and $\tilde{S}$ is $E$-unitary.
\item If $S$ is proper then it admits a geometric model if and only if the inverse monoid
$\tilde{S}$ is $E$-unitary and the morphism $\psi\colon S/\sigma\to \tilde{S}/\sigma$ is injective.
\end{enumerate}
\end{proposition}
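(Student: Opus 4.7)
My plan is to deduce Part (2) from Part (1) by invoking Proposition \ref{prop:injective}, and to concentrate the real work in Part (1). For the ``only if'' direction of (2), Part (1) gives injectivity of $\psi\colon S \to \tilde{S}$ together with $E$-unitariness of $\tilde{S}$, and Proposition \ref{prop:injective}(1) then yields injectivity of $\psi\colon S/\sigma \to \tilde{S}/\sigma$. For the ``if'' direction, properness of $S$ combined with Proposition \ref{prop:injective}(2) promotes injectivity of $\psi\colon S/\sigma \to \tilde{S}/\sigma$ to injectivity of $\psi\colon S \to \tilde{S}$, whereupon Part (1) applies.

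For Part (1), the ``if'' direction is immediate: the map $\psi\colon S \to \tilde{S}$ itself supplies the required canonical $(2,1,1,0)$-embedding of $S$ into the $X$-generated $E$-unitary inverse monoid $\tilde{S}$. For the ``only if'' direction, assume a canonical embedding $f\colon S \hookrightarrow T$ into an $X$-generated $E$-unitary inverse monoid $T$. The universal property of $\tilde{S}$ from \eqref{eq:univ_inv} produces a canonical $X$-surjection $\phi\colon \tilde{S} \twoheadrightarrow T$ with $f = \phi\circ\psi$, so injectivity of $\psi$ follows at once from injectivity of $f$.

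The substantive task, which I expect to be the main obstacle, is proving that $\tilde{S}$ is $E$-unitary. My plan is to show that $\phi$ is an isomorphism. First, using Proposition \ref{prop:isom_sem_rel} I identify $E(\tilde{S}) \cong P(S)$ via $\psi$; since $E(T)$ is semilattice-generated by $\{xx^{-1}, x^{-1}x : x \in X\} \subseteq f(P(S))$, it follows that $E(T) = f(P(S))$ and that $\phi$ restricts to a bijection $E(\tilde{S}) \to E(T)$. Next, properness of $S$ (guaranteed by Proposition \ref{prop:e_unitary1}) yields the following argument: for $a, b \in S$ with $f(a)\mathrel{\sigma_T} f(b)$, one takes $c \in T$ with $c \leq f(a), f(b)$, writes the idempotent $c^{-1}c \in E(T) = f(P(S))$ as $f(e)$ for some $e \in P(S)$, and gets $f(ae) = c = f(be)$; injectivity of $f$ then gives $ae = be$ in $S$, so $a\mathrel{\sigma_S} b$. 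Thus $\sigma_T$ pulls back to $\sigma_S$ on $f(S)$, and in particular the canonical map $S/\sigma \to T/\sigma$ is injective.

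The final and hardest step is to combine the bijection on idempotents with the control on $\sigma$-classes to conclude injectivity of $\phi$ on all of $\tilde{S}$. I anticipate that this will require invoking the universal property of $\tilde{S}/\sigma$ as a group (Lemma \ref{lem:generation2a}) to upgrade the embedding $S/\sigma \hookrightarrow T/\sigma$ into injectivity of the induced group morphism $\tilde{S}/\sigma \to T/\sigma$, and then matching the partial action decomposition of $T$ from \eqref{eq:closure_on_M} against an analogous decomposition of $\tilde{S}$ as a partial action product built on the semilattice $E(\tilde{S}) \cong P(S)$. Once $\phi$ is shown to be injective, $\tilde{S} \cong T$ and hence $\tilde{S}$ inherits $E$-unitariness from $T$, completing the proof.
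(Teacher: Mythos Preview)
Your deduction of Part~(2) from Part~(1) via Proposition~\ref{prop:injective}, and your treatment of the ``if'' direction of Part~(1), match the paper exactly.

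For the ``only if'' direction of Part~(1), the paper's argument is much shorter than your proposed route. The paper factors the embedding $f\colon S\hookrightarrow U$ through $\tilde S$ by the universal property, notes that this forces $\psi$ to be injective, and then asserts that the factor $\phi\colon \tilde S\to U$ is itself an embedding; since inverse submonoids of $E$-unitary inverse monoids are $E$-unitary, $\tilde S$ is $E$-unitary. No detour through $\sigma$-classes or partial-action products is taken. Since $\phi$ is automatically surjective ($U$ is $X$-generated), the paper's claim and your goal of proving $\phi$ an isomorphism are in fact equivalent --- but the paper treats this as immediate rather than as the ``hardest step''.

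Your detailed plan has two concrete problems. First, your justification that $E(T)=f(P(S))$ is wrong: $E(T)$ is \emph{not} semilattice-generated by $\{xx^{-1},\,x^{-1}x:x\in X\}$ (already in $\mathsf{FI}(X)$ the idempotent $(xy)(xy)^{-1}$ is not a product of such elements). The correct reason is simply that the surjection $\phi$ carries $E(\tilde S)=\psi(P(S))$ onto $E(T)$ and is injective there by Proposition~\ref{prop:isom_sem_rel}. Second, and more seriously, your ``upgrade'' of the embedding $S/\sigma\hookrightarrow T/\sigma$ to an embedding $\tilde S/\sigma\hookrightarrow T/\sigma$ cannot be obtained from the universal property of $\tilde S/\sigma$: that property produces a group homomorphism, not an injection, and an embedding of a monoid $M$ into a group $H$ does not in general force the universal group of $M$ to embed in $H$. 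Without this, your closing step of matching partial-action decompositions of $\tilde S$ and $T$ has no footing --- and in any case it tacitly assumes $\tilde S$ already admits such a decomposition, i.e.\ is $E$-unitary, which is precisely what you are trying to prove.
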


\begin{proof}
(1) Suppose $S$ admits a geometric model and canonically embeds into an $X$-generated $E$-unitary inverse monoid $U$. The universal property of $\tilde{S}$ implies that the embedding of $S$ into $U$ factors through $\tilde{S}$, so that $S$ embeds into $\tilde{S}$ and $\tilde{S}$ embeds into $U$. Since $U$ is $E$-unitary, so is $\tilde{S}$. (Indeed, an inverse monoid $S$ is $E$-unitary if and only if $s\geq e\in E(S)$ implies $s\in E(S)$. Hence, inverse submonoids of $E$-unitary inverse monoids are $E$-unitary.) The reverse implication is immediate.

(2) If $S$ is proper and admits a geometric model, (1) implies that $\tilde{S}$ is $E$-unitary and $\psi\colon S/\sigma\to \tilde{S}/\sigma$ is injective
by Proposition \ref{prop:injective}. 
Conversely, suppose that $\tilde{S}$ is $E$-unitary and the morphism $\psi\colon S/\sigma \to \tilde{S}/\sigma$ is injective. By Proposition \ref{prop:injective}  the morphism $\psi\colon S \to \tilde{S}$ is injective, so that $S$ admits a geometric model.
\end{proof}

\section{Varieties of $F$-birestriction monoids}\label{sec:varieties}

The following statement is inspired by and similar to \cite[Proposition 3.1]{AKSz21}.

\begin{lemma}\label{lem:varietyF}
An algebra $(S;\,\cdot, \,^*,\, ^+,\, \mx{},1)$ is an $F$-birestriction monoid if and only if $(S;\, \cdot,\, ^*,\, ^+,\, 1)$ is a birestriction monoid and the following conditions hold:
\begin{itemize}
\item[(M1)] $\mx{a} \geq a$, for all $a \in S$,
\item[(M2)] $\mx{a}=\mx{(ae)}$, for all $a\in S$ and $e\in P(S)$. 
\end{itemize}
\end{lemma}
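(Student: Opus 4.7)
The plan is to verify both directions by unpacking what it means for $\mx{}$ to select the maximum of each $\sigma$-class and translating this into the equational conditions (M1) and (M2).

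For the forward direction, I would assume $S$ is $F$-birestriction and use the definition of $\mx{a}$ as the maximum of the $\sigma$-class of $a$. Condition (M1) is immediate since $a$ itself lies in $[a]_\sigma$, so $a \leq \mx{a}$. For (M2), observe that for any projection $e \in P(S)$ we have $ae \leq a$ (since $ae = ae^* \cdot a \cdot \ldots$, or more directly, $ae$ has projection $\leq a^* e \leq a^*$, hence $ae = a\cdot(ae)^* \leq a$), which gives $ae \mathrel{\sigma} a$, and therefore $\mx{(ae)}$ and $\mx{a}$ are both the maximum of the same $\sigma$-class.

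For the backward direction, suppose $(S;\cdot,^*,^+,1)$ is a birestriction monoid equipped with a unary operation $\mx{}$ satisfying (M1) and (M2). I would prove in sequence: (i) $\mx{}$ is constant on each $\sigma$-class; (ii) $\mx{a}$ belongs to $[a]_\sigma$; and (iii) $\mx{a}$ is an upper bound for $[a]_\sigma$. For (i), if $a \mathrel{\sigma} b$ then there exists $e \in P(S)$ with $ae = be$, and applying (M2) twice yields $\mx{a} = \mx{(ae)} = \mx{(be)} = \mx{b}$. For (ii), (M1) gives $a \leq \mx{a}$, so $a$ is a common lower bound of $a$ and $\mx{a}$, whence $a \mathrel{\sigma} \mx{a}$. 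For (iii), take any $b \in [a]_\sigma$; by (i), $\mx{b} = \mx{a}$, and by (M1), $b \leq \mx{b} = \mx{a}$. Combining (ii) and (iii), $\mx{a}$ is the maximum element of $[a]_\sigma$, so $S$ is $F$-birestriction.

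The argument is essentially routine once the correct characterizations of $\sigma$ (via common lower bounds, and via $ae = be$ for a projection $e$) are in hand. The only mildly delicate step is the verification in the backward direction that $\mx{}$ respects $\sigma$; this is where (M2) does its real work, and it is the reason (M2) is formulated symmetrically in terms of multiplication by an arbitrary projection rather than only on one side. No further obstacles are expected, and no appeal to the results of later sections is needed.
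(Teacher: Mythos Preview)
Your proposal is correct and follows essentially the same approach as the paper. The only cosmetic difference is in the forward justification of (M2): the paper argues $1 \mathrel{\sigma} e$ hence $a \mathrel{\sigma} ae$, while you argue $ae \leq a$ hence $ae \mathrel{\sigma} a$ (note that $ae \leq a$ is immediate from the definition of the natural partial order, so your parenthetical justification is more complicated than necessary); the backward direction is identical in substance, just broken into more explicit sub-steps.
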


\begin{proof}
Let $S$ be an $F$-birestriction monoid. Then (M1) holds by the definition of the operation $\mx{(\cdot )}$.
Since $1\mathrel{\sigma} e$ for all $e\in P(S)$, it follows that $a \mathrel{\sigma} ae$ for all $a\in S$ in $e\in P(S)$, so that (M2) holds as well.

Let now $(S;\,\cdot, \,^*,\, ^+,\, \mx{},1)$ be an algebra for which $(S;\, \cdot, \, ^*,\, ^+,\, 1)$ is a birestriction monoid and axioms (M1) and (M2) hold. 
If $a\mathrel{\sigma} b$, there is $e \in P(S)$ such that $ae=be$. Using (M2), we have $\mx{a} = \mx{(ae)} = \mx{(be)} = \mx{b}$. In view of (M1), this yields $\mx{a}\geq b$, showing that $\mx{a}$ is the maximum element in its $\sigma$-class. This completes the proof.
\end{proof}

It is now easy to deduce the following statement.

\begin{proposition}\label{prop:varietyFR} (Varieties of $F$-birestriction monoids)\mbox{}
\begin{enumerate}
\item $F$-birestriction monoids in the signature $(\cdot,\, ^*,\, ^+, \, \mx{},1)$  form a variety, which we denote by ${\bf FBR}$, of type $(2,1,1,1,0)$. It is defined by all the identities which define the variety of birestriction monoids, along with the identities
\begin{equation}\label{eq:a1n}
\mx{x}x^* =x, \,\, \mx{(xy^*)}=\mx{x}.
\end{equation}
\item Left strong and right strong $F$-birestriction monoids form the subvarieties ${\bf FBR_{ls}}$ and ${\bf FBR_{rs}}$ of the variety ${\bf FBR}$, defined by the identities which define the variety ${\bf FBR}$ along with the identities
\begin{equation}\label{eq:left_s}
\mx{x}\mx{y}=(\mx{x})^+\mx{(xy)} \,\, \text{and}
\end{equation}
\begin{equation}\label{eq:right_s}
\mx{x}\mx{y}=\mx{(xy)}(\mx{y})^*,
\end{equation}
respectively.
\item Strong $F$-birestriction monoids form the subvariety ${\bf FBR_s}={\bf FBR_{ls}} \cap {\bf FBR_{rs}}$ of the variety ${\bf FBR}$. 
\item Perfect $F$-birestriction monoids form the subvariety ${\bf FBR_p}$ of the variety ${\bf FBR}$, defined by the identities which define the variety ${\bf FBR}$ along with the identity
\begin{equation}\label{eq:perf}
\mx{x}\mx{y}=\mx{(xy)}.
\end{equation}
\end{enumerate}
\end{proposition}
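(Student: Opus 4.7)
My plan is to invoke Lemma \ref{lem:varietyF} for part (1) and, for parts (2)--(4), to rewrite the defining conditions on the premorphism $\tau\colon S/\sigma\to S$ directly in the enriched signature $(\cdot,\,^*,\,^+,\,\mx{},1)$. Once each class is shown to be defined by identities in this signature, Birkhoff's theorem yields that it is a variety.

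For part (1), I would first observe that in any birestriction monoid the natural partial order satisfies $a\leq b$ iff $a=ba^*$, so condition (M1) of Lemma \ref{lem:varietyF} is captured verbatim by $\mx{x}x^*=x$. For condition (M2), $\mx{a}=\mx{(ae)}$ for every $e\in P(S)$, I would argue that it is equivalent to the identity $\mx{(xy^*)}=\mx{x}$: the identity immediately yields (M2) for every projection of the form $y^*$, and conversely every projection $e\in P(S)$ satisfies $e=e^*$, so setting $y=e$ retrieves the desired instance of (M2). Combining with the birestriction identities, \eqref{eq:a1n} then axiomatizes ${\bf FBR}$.

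For parts (2)--(4), the key observation is that in an $F$-birestriction monoid one has $\tau([a]_\sigma)=\mx{a}$ for every $a\in S$, so the conditions on $\tau$ rephrase as identities in $\mx{}$. Specifically, (LPM) applied with $m=[x]_\sigma$ and $n=[y]_\sigma$ is exactly \eqref{eq:left_s}, (RPM) becomes \eqref{eq:right_s}, and the monoid-morphism condition $\tau(mn)=\tau(m)\tau(n)$ becomes \eqref{eq:perf}. The axioms (PM1) and (PM2) do not need to appear as separate identities: $\tau(1)=1$ follows from $\mx{x}x^*=x$ by setting $x=1$, and $\tau(m)\tau(n)\leq\tau(mn)$ is automatic since $\mx{x}\mx{y}$ and $xy$ are $\sigma$-related and $\mx{(xy)}$ is the maximum of its $\sigma$-class by (M1) and (M2). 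Part (3) is then immediate from the set-theoretic definition of ``strong'' as the conjunction of ``left strong'' and ``right strong''.

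The argument is essentially a translation exercise; the only point where care is warranted is the step in part (1) that $\mx{(xy^*)}=\mx{x}$ is strong enough to cover (M2) for \emph{all} projections, which is handled by the self-duality $e=e^*$ on $P(S)$ that reduces an arbitrary projection to one of the form $y^*$.
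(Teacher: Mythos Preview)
Your proposal is correct and matches the paper's approach: the paper does not spell out a proof but simply states that the proposition is ``now easy to deduce'' from Lemma~\ref{lem:varietyF}, and your argument is precisely the routine verification that deduction requires.
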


From  now on, unless explicitly stated otherwise, we will consider $F$-birestriction monoids as $(2,1,1,1,0)$-algebras. 
Morphisms, congruences and subalgebras of such algebras will be taken with respect to the signature $(\cdot,\, ^*,\, ^+,\, \mx{}, 1)$. To emphasize this, we sometimes refer, e.g., to morphisms of $F$-birestriction monoids as $(2,1,1,1,0)$-morphisms.

We define a {\em reduced $F$-birestriction monoid} as an $F$-birestriction monoid which has only one projection, $1$. It is a reduced birestriction monoid, so that $a^* = a^+ =1$ holds for all its elements and the natural partial order on it is trivial. Hence $\mx{a}=a$ holds for all of its elements. It follows that a reduced $F$-birestriction monoid is a reduced birestriction monoid with the operation $\mx{(\cdot)}$ given by $\mx{a}=a$ for all elements $a$.

Let $S$ be an $F$-birestriction monoid and $\sigma$ be the minimum $(2,1,1,0)$-congruence on $S$ which identifies all the projections. It is easy to see that $a \mathrel{\sigma} b$ if and only if $\mx{a} = \mx{b}$. Thus the $(2,1,1,0)$-quotient map $S\to S/\sigma$ preserves the operation $\mx{(\cdot )}$, so that $\sigma$ is in fact a $(2,1,1,1,0)$-congruence on $S$ and the quotient map $S\to S/\sigma$ is a $(2,1,1,1,0)$-morphism. We will use this fact throughout the paper without further mention.

\section{The structure of the free $F$-birestriction monoid ${\mathsf{FFBR}}(X)$}\label{sec:structureFFR}
\subsection{The coordinatization of ${\mathsf{FFBR}}(X)$}
Let $X$ be a non-empty set and ${\mathsf{FFBR}}(X)$ the free $X$-generated $F$-birestriction monoid. 
Since $F$-birestriction monoids are proper (see \cite[Lemma 5]{Kud15}), Theorem \ref{thm:proper} implies that
$$
{\mathsf{FFBR}}(X) \simeq P({\mathsf{FFBR}}(X)) \rtimes {\mathsf{FFBR}}(X)/\sigma.
$$

We first determine the structure of
${\mathsf{FFBR}}(X)/\sigma$.

\begin{lemma}\label{lem:a20a}
${\mathsf{FFBR}}(X)/\sigma$ is canonically isomorphic to $X^*$.
\end{lemma}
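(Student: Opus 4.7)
The plan is to establish the universal property of $X^*$ for ${\mathsf{FFBR}}(X)/\sigma$, which yields the canonical isomorphism. Recall that, by the paragraph following Proposition \ref{prop:varietyFR}, a reduced $F$-birestriction monoid is precisely a monoid equipped with the trivial unary operations $a^* = a^+ = 1$ and $\mx{a} = a$. In particular, the canonical $(2,1,1,1,0)$-quotient ${\mathsf{FFBR}}(X)/\sigma$, having only one projection, may be identified with the underlying monoid of its multiplicative reduct.

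Concretely, I would first note that the assignment $x \mapsto [x]_\sigma$ extends, by the universal property of the free monoid, to a canonical surjective monoid morphism $\pi \colon X^* \to {\mathsf{FFBR}}(X)/\sigma$. To prove injectivity, I would check that ${\mathsf{FFBR}}(X)/\sigma$ solves the universal problem for monoids over $X$. Given any monoid $M$ and any map $f \colon X \to M$, regard $M$ as a reduced $F$-birestriction monoid (so $P(M) = \{1\}$ and $\mx{m} = m$ for all $m \in M$). By the universal property of ${\mathsf{FFBR}}(X)$ in the variety ${\bf FBR}$, the map $f$ extends to a unique $(2,1,1,1,0)$-morphism $\widetilde{f} \colon {\mathsf{FFBR}}(X) \to M$. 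Because $M$ has only the projection $1$, we have $\widetilde{f}(e) = 1$ for every $e \in P({\mathsf{FFBR}}(X))$; consequently, if $a \mathrel{\sigma} b$ in ${\mathsf{FFBR}}(X)$, witnessed by $ae = be$ with $e \in P({\mathsf{FFBR}}(X))$, then $\widetilde{f}(a) = \widetilde{f}(a)\widetilde{f}(e) = \widetilde{f}(b)\widetilde{f}(e) = \widetilde{f}(b)$. Hence $\widetilde{f}$ factors through the quotient map, yielding a unique monoid morphism $\overline{f} \colon {\mathsf{FFBR}}(X)/\sigma \to M$ with $\overline{f}([x]_\sigma) = f(x)$.

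This shows that ${\mathsf{FFBR}}(X)/\sigma$, together with the assignment $x \mapsto [x]_\sigma$, has the universal property of the free $X$-generated monoid. Invoking the uniqueness (up to canonical isomorphism) of objects satisfying such a universal property, I would conclude that $\pi \colon X^* \to {\mathsf{FFBR}}(X)/\sigma$ is the required canonical isomorphism.

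There is essentially no substantive obstacle here: the argument is a direct application of universal properties together with the observation that $F$-birestriction morphisms into a reduced target kill $\sigma$. The only mild point requiring care is that the factorization is carried out in the enriched signature $(\cdot,\,^*,\,^+,\,\mx{},1)$, so one must verify that $\widetilde{f}$ respects $\mx{(\cdot)}$; but this is automatic since $\widetilde{f}(\mx{a}) = \mx{\widetilde{f}(a)} = \widetilde{f}(a)$ agrees with $\widetilde{f}(a)$ after identifying $\sigma$-classes.
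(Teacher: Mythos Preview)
Your proof is correct and follows essentially the same approach as the paper: both rely on the observation that any monoid (in particular $X^*$) is a reduced $F$-birestriction monoid, invoke the universal property of ${\mathsf{FFBR}}(X)$ to obtain a canonical map to it, and use that this map factors through $\sigma$. The paper packages this slightly differently---it notes that $X^*$ is a canonical quotient of ${\mathsf{FFBR}}(X)/\sigma$ and that any $X$-generated monoid quotient of ${\mathsf{FFBR}}(X)$ is a quotient of $X^*$, then concludes from this mutual-quotient relation---whereas you spell out the full universal property of ${\mathsf{FFBR}}(X)/\sigma$ as a free monoid; but the substance is the same.
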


\begin{proof}  
Since $X^*$ is an $X$-generated $F$-birestriction monoid (where $\mx{u}=u$, $u^+=u^*=1$ for all $u\in X^*$), the universal property of ${\mathsf{FFBR}}(X)$ implies that $X^*$ is a canonical $(2,1,1,1,0)$-quotient of ${\mathsf{FFBR}}(X)$, so it is a $(2,1,1,0)$-quotient of ${\mathsf{FFBR}}(X)$ as well.
But any monoid $(2,1,1,0)$-quotient of ${\mathsf{FFBR}}(X)$ is an $X$-generated monoid and so is a quotient of $X^*$. 
\end{proof}

Since the free monoid $X^*$ is cancellative, it follows from \cite[Theorem 2.9]{Kud19} that ${\mathsf{FFBR}}(X)$ is ample (for the definition of an ample birestriction monoid, see, e.g., \cite{Kud19}). Lemma \ref{lem:generation12a} shows that a similar comment applies to the $F$-birestriction monoids considered therein.
 
 \subsection{${\mathsf{FFBR}}(X)$ as a birestriction monoid over extended generators $X\cup \overline{X^+}$} \label{subs:extended}
We start from the following observation.

\begin{proposition} \label{prop:generation1}
Let $F$ be an $X$-generated $F$-birestriction monoid and put $M=F/\sigma$. For every $m\in M$ let $\overline{m}$ be the maximum element of the $\sigma$-class of $F$ which projects onto $m$. We put $\overline{M} = \{\overline{m}\colon m\in M\}$. 
\begin{enumerate}
\item $F$ is $(2,1,1,0)$-generated by $X\cup \overline{M}$.
\item Every element $a\in F$ can be written as $a=e\overline{m}$ for some $e\in P(F)$ and  $m\in M$.
\item Let $T$ be the $X$-generated submonoid of $F$. Then $M$ is a quotient of $T$. 
Consequently, if $M \simeq  X^*$ then $T \simeq X^*$, too.
\end{enumerate}
\end{proposition}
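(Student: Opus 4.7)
For part (1), I would proceed by induction on the construction of $(2,1,1,1,0)$-terms. Since $F$ is $(2,1,1,1,0)$-generated by $X$, every element arises as such a term built from $X$ using $\cdot, {}^*, {}^+, \mx{(\cdot)}, 1$. The only operation I need to eliminate is $\mx{(\cdot)}$. If $a \in F$ has been expressed as a $(2,1,1,0)$-term in $X \cup \overline{M}$, then by the very definition of $\overline{M}$, the element $\mx{a}$ is the maximum element of $[a]_\sigma$, which is precisely $\overline{[a]_\sigma} \in \overline{M}$. So $\mx{a}$ is already an element of the generating set, and every occurrence of $\mx{(\cdot)}$ can be absorbed into $\overline{M}$.

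For part (2), I would invoke properness of $F$. Since $F$ is $F$-birestriction, we have $a \leq \mx{a}$, and $\mx{a} = \overline{[a]_\sigma}$ by definition of $\overline{M}$. The definition of the natural partial order then gives
\begin{equation*}
a = a^+ \, \mx{a} = a^+ \, \overline{[a]_\sigma},
\end{equation*}
which is exactly the desired decomposition with $e = a^+ \in P(F)$ and $m = [a]_\sigma \in M$.

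For part (3), the quotient morphism $\sigma^\natural \colon F \to M$ is a $(2,1,1,1,0)$-morphism (as observed in Section \ref{sec:varieties}), so in particular it is a monoid morphism. Its restriction $\sigma^\natural|_T \colon T \to M$ is therefore a monoid morphism, and $\sigma^\natural|_T(T)$ is a submonoid of $M$ containing $\sigma^\natural(X)$. Since $M$ is a reduced $F$-birestriction monoid, the operations ${}^*$, ${}^+$, $\mx{(\cdot)}$ act trivially on $M$, so $M$ is $(2,1,1,1,0)$-generated by $\sigma^\natural(X)$ if and only if it is generated by $\sigma^\natural(X)$ as a monoid. Hence $\sigma^\natural|_T$ is surjective, and $M$ is a monoid quotient of $T$. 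For the final claim, suppose $M \simeq X^*$ via the canonical isomorphism. Since $T$ is $X$-generated as a monoid, the universal property of $X^*$ yields a canonical surjective morphism $\pi \colon X^* \to T$. The composition $\sigma^\natural|_T \circ \pi \colon X^* \to M \simeq X^*$ acts identically on $X$ and is therefore the identity of $X^*$. It follows that $\pi$ is injective, hence an isomorphism, so $T \simeq X^*$.

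I do not foresee a genuine obstacle here; everything reduces to bookkeeping about terms and applying the structure theorem. The only subtlety worth flagging is in part (3), namely the observation that $M$ being reduced forces the $(2,1,1,1,0)$-generation by $\sigma^\natural(X)$ to collapse to plain monoid generation, which is what permits the surjectivity argument for $\sigma^\natural|_T$ to go through.
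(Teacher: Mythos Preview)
Your proof is correct and follows essentially the same approach as the paper, just with more detail spelled out (the paper's proof is very terse). One minor remark: in part (2) you announce that you will invoke properness of $F$, but you never actually use it---the identity $a = a^+\,\mx{a}$ follows directly from $a \leq \mx{a}$ and the definition of the natural partial order, exactly as you wrote.
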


\begin{proof}
(1) This is clear, as for every $a\in F$ there is $m\in M$ such that $\mx{a}=\overline{m}$.

(2) For each $a\in F$ we have $a=a^+\overline{m}$ where $\overline{m}=\mx{a}$.

(3) Let $T$ be the $X$-generated submonoid of $F$. Then $\sigma^{\natural}\colon F\to M$ restricts to a surjective morphism from $T$ onto $M$, so that $M$ is a quotient of $T$. 
\end{proof}

\begin{remark}\label{rem:a20b}
Since $\overline{1} = 1$, the generator $\overline{1}$ in Proposition \ref{prop:generation1}(1) can be omitted, so that $F$ is $(2,1,1,0)$-generated by $X\cup (\overline{M\setminus 1})$.
\end{remark}

\begin{corollary} \label{cor:isom11}
${\mathsf{FFBR}}(X)$ is $(X\cup \overline{X^+})$-generated as a birestriction monoid via the assignment map  $x\mapsto x$, where $x\in X$, and $\overline{u} \mapsto \mx{u}$, where $u\in X^+$.
\end{corollary}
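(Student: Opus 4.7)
The plan is to derive this directly by specializing Proposition \ref{prop:generation1} to $F = {\mathsf{FFBR}}(X)$ and invoking the identification of ${\mathsf{FFBR}}(X)/\sigma$ with $X^*$ provided by Lemma \ref{lem:a20a}. There are no technical obstacles here; the only care needed is in bookkeeping the various assignment maps so that the resulting generating set indexed by $X \cup \overline{X^+}$ correctly matches the statement.

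First, I would set $F = {\mathsf{FFBR}}(X)$ and $M = F/\sigma$ in the notation of Proposition \ref{prop:generation1}. That proposition gives that $F$ is $(2,1,1,0)$-generated by $X \cup \overline{M}$, where $\overline{M} = \{\mx{m}\colon m\in M\}$ and $\mx{m}$ denotes the maximum element of the $\sigma$-class projecting onto $m$. By Lemma \ref{lem:a20a}, the canonical $(2,1,1,1,0)$-morphism ${\mathsf{FFBR}}(X) \to X^*$ is an isomorphism on the reduced quotient, so that $M$ is canonically identified with $X^*$. Under this identification, for each word $u \in X^*$ the $\sigma$-class corresponding to $u$ is precisely the $\sigma$-class of $u$ viewed as an element of ${\mathsf{FFBR}}(X)$ (via the canonical image of $X^*$ in ${\mathsf{FFBR}}(X)$), and its maximum element is $\mx{u}$.

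Next, I would apply Remark \ref{rem:a20b} to discard the redundant generator $\overline{1}= 1 \in \overline{M}$. This yields that ${\mathsf{FFBR}}(X)$ is $(2,1,1,0)$-generated by $X \cup \{\mx{u}\colon u \in X^+\}$, which is exactly the image of $X \cup \overline{X^+}$ under the assignment map in the statement. I would close by noting that the assignment $x \mapsto x$ for $x \in X$ and $\overline{u} \mapsto \mx{u}$ for $u \in X^+$ is well defined because $\overline{X^+}$ is in bijection with $X^+$, and is the assignment map witnessing the $(2,1,1,0)$-generation.
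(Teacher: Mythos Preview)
Your proposal is correct and follows essentially the same approach as the paper: the paper's proof simply invokes Lemma~\ref{lem:a20a} to identify ${\mathsf{FFBR}}(X)/\sigma$ with $X^*$ and then applies Remark~\ref{rem:a20b} (which rests on Proposition~\ref{prop:generation1}). Your version is just a more explicit unpacking of these same two ingredients.
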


\begin{proof} By Lemma \ref{lem:a20a} we have that ${\mathsf{FFBR}}(X)/\sigma \simeq X^*$, so that the statement follows applying Remark \ref{rem:a20b}.
\end{proof}

We define the set of relations
\begin{equation}\label{def:N}
N = \{\overline{x} \geq x, \, \overline{uv} \geq \overline{u}\,\overline{v}\colon x\in X, \,  u,v\in X^+\},
\end{equation}
where $\overline{x}\geq x$ is the abbreviation for the relation $x = \overline{x}x^*$, which is equivalent to the relation $x=x^+\overline{x}$. Similarly, $\overline{uv}\geq \overline{u}\,\overline{v}$ is the abbreviation for the relation $\overline{u}\,\overline{v} = \overline{uv}(\overline{u}\,\overline{v})^*$ or its equivalent relation $\overline{u}\,\overline{v} = (\overline{u}\,\overline{v})^+\overline{uv}$. We put
$$
{\mathcal F} = {\mathrm{BRestr}} \langle X\cup \overline{X^+} \mathrel{\vert} N\rangle.
$$

Applying induction, it is easy to see that $\overline{u_1\cdots u_n} \geq \overline{u_1}\cdots \overline{u_n}$ holds in ${\mathcal F}$ for all $n\geq 2$ and $u_1,\dots, u_n \in X^+$.
Observe that for any $u=x_1\cdots x_n\in X^+$ (where $x_i\in X$ for all $i$) we have that $\overline{u} = \overline{x_1\cdots x_n} \geq \overline{x_1}\cdots \overline{x_n} \geq x_1\cdots x_n = u$. We have proved the following.

\begin{lemma}\label{lem:u13d}
The inequality $\overline{u} \geq u$ holds in ${\mathcal F}$ for any $u\in X^+$. 
\end{lemma}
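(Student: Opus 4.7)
The plan is a straightforward induction on the length of $u \in X^+$, leveraging the two families of defining relations in $N$ together with the standard compatibility of the natural partial order with multiplication (recalled in Section~\ref{sec:prelim}: if $s \leq t$ then $su \leq tu$ and $us \leq ut$).

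For the base case, when $u = x \in X$ has length $1$, the inequality $\overline{x} \geq x$ is precisely one of the defining relations of ${\mathcal F}$, so there is nothing to prove.

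For the inductive step, I would first establish the auxiliary fact (already announced in the paragraph preceding the lemma) that $\overline{u_1 \cdots u_n} \geq \overline{u_1}\,\cdots\,\overline{u_n}$ holds in ${\mathcal F}$ for all $n \geq 2$ and $u_1, \ldots, u_n \in X^+$. This is itself a short induction on $n$: the base case $n = 2$ is a defining relation from $N$; for the step, apply the $n=2$ relation to $u_1 \cdots u_{n-1}$ and $u_n$ to get $\overline{u_1 \cdots u_n} \geq \overline{u_1 \cdots u_{n-1}}\,\overline{u_n}$, and combine with the inductive hypothesis $\overline{u_1 \cdots u_{n-1}} \geq \overline{u_1}\cdots \overline{u_{n-1}}$, multiplying both sides on the right by $\overline{u_n}$ and using transitivity.

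Specializing to $u_i = x_i \in X$ with $u = x_1 \cdots x_n$ then yields $\overline{u} \geq \overline{x_1}\cdots \overline{x_n}$. Combining the relations $\overline{x_i} \geq x_i$ one factor at a time via compatibility of $\leq$ with multiplication produces $\overline{x_1}\cdots \overline{x_n} \geq x_1 \cdots x_n = u$, and transitivity concludes $\overline{u} \geq u$. There is no genuine obstacle here: the only bookkeeping point is the compatibility of the order with products, which is a standard property of birestriction monoids and is invoked without comment. The argument is essentially the chain of inequalities already displayed just before the lemma statement.
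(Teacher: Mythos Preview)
Your argument is correct and matches the paper's own proof essentially verbatim: the paper establishes $\overline{u_1\cdots u_n} \geq \overline{u_1}\cdots \overline{u_n}$ by induction in the paragraph preceding the lemma, then chains $\overline{x_1\cdots x_n} \geq \overline{x_1}\cdots \overline{x_n} \geq x_1\cdots x_n$ exactly as you do.
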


\begin{lemma}\label{lem:quotient1}
${\mathsf{FFBR}}(X)$ is an $(X\cup \overline{X^+})$-canonical $(2,1,1,0)$-quotient of ${\mathcal F}$.    
\end{lemma}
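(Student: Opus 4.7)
The plan is to exhibit ${\mathsf{FFBR}}(X)$ as an $(X\cup \overline{X^+})$-generated birestriction monoid satisfying the defining relations $N$ of ${\mathcal F}$, and then invoke the universal property of the presentation to obtain the desired canonical quotient map. Concretely, by Corollary~\ref{cor:isom11}, the assignment $x\mapsto x$ (for $x\in X$) and $\overline{u}\mapsto \mx{u}$ (for $u\in X^+$) exhibits ${\mathsf{FFBR}}(X)$ as an $(X\cup \overline{X^+})$-generated birestriction monoid, so it remains to check that the images of the defining relations in $N$ hold in ${\mathsf{FFBR}}(X)$. Granted this, the standard universal property of a birestriction monoid presentation yields the required canonical $(2,1,1,0)$-morphism ${\mathcal F}\to {\mathsf{FFBR}}(X)$, and this morphism is automatically surjective since its image contains the generating set $X\cup \mx{X^+}$ of ${\mathsf{FFBR}}(X)$.

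The first family of relations, $\overline{x}\geq x$ for $x\in X$, is immediate: in ${\mathsf{FFBR}}(X)$ this reads $\mx{x}\geq x$, which is just axiom (M1) of Lemma~\ref{lem:varietyF}. The second family, $\overline{uv}\geq \overline{u}\,\overline{v}$ for $u,v\in X^+$, reads $\mx{(uv)}\geq \mx{u}\,\mx{v}$ in ${\mathsf{FFBR}}(X)$. This follows from the universal property of the maximum in each $\sigma$-class: since $\mx{u}\mathrel{\sigma} u$ and $\mx{v}\mathrel{\sigma} v$, and $\sigma$ is a congruence, we have $\mx{u}\,\mx{v}\mathrel{\sigma} uv$, so $\mx{u}\,\mx{v}$ lies in the $\sigma$-class of $uv$, which has maximum element $\mx{(uv)}$; therefore $\mx{u}\,\mx{v}\leq \mx{(uv)}$.

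The only mild subtlety to keep in mind, rather than a genuine obstacle, is that in the expression $\mx{(uv)}$ appearing above, the word $uv\in X^+$ must be read as the product of its letters inside ${\mathsf{FFBR}}(X)$; this interpretation is consistent because ${\mathsf{FFBR}}(X)/\sigma\simeq X^*$ canonically by Lemma~\ref{lem:a20a}, so each $u\in X^+$ unambiguously names a $\sigma$-class whose maximum element is $\mx{u}$. With both families of relations verified, the presentation of ${\mathcal F}$ yields a canonical $(2,1,1,0)$-morphism ${\mathcal F}\to {\mathsf{FFBR}}(X)$ sending each generator to the designated image, and surjectivity follows from Corollary~\ref{cor:isom11}. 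This completes the proof sketch.
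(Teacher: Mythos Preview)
Your proof is correct and follows essentially the same approach as the paper: the paper's proof is a one-liner stating that the defining relations of ${\mathcal F}$ hold in ${\mathsf{FFBR}}(X)$ with respect to the assignment map of Corollary~\ref{cor:isom11}, and you have simply spelled out this verification in detail (using (M1) for $\overline{x}\geq x$ and the $\sigma$-congruence argument for $\overline{uv}\geq \overline{u}\,\overline{v}$).
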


\begin{proof} Since the defining relations of ${\mathcal F}$ hold in ${\mathsf{FFBR}}(X)$ (with respect to the assignment map of Corollary \ref{cor:isom11}), the claim follows.
\end{proof}

We now aim to show that ${\mathsf{FFBR}}(X)$ is in fact isomorphic to ${\mathcal F}$.
\begin{lemma}\label{lem:20a3}
${\mathcal F}$ is an $F$-birestriction monoid with ${\mathcal F}/\sigma \simeq X^*$. The maximum element of the $\sigma$-class which projects onto $w\in X^*$ equals $\overline{w}$, if $w\in X^+$, and $1$, if $w=1$. Furthermore, ${\mathcal F}$ is $X$-generated as an $F$-birestriction monoid.    
\end{lemma}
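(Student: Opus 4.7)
The plan is to first identify ${\mathcal F}/\sigma$, then show that $\overline{w}$ is the maximum of its $\sigma$-class for each $w \in X^+$, and finally deduce the $F$-birestriction structure and the $X$-generation. For the quotient, I would apply Lemma~\ref{lem:generation2a}(1), which gives the monoid presentation ${\mathcal F}/\sigma = \langle X \cup \overline{X^+} \mid \sigma^{\natural}(N) \rangle$. The relation $x = \overline{x}x^*$ becomes $x = \overline{x}$ under $\sigma^{\natural}$ (since $x^*$ maps to $1$), and $\overline{u}\,\overline{v} = \overline{uv}(\overline{u}\,\overline{v})^*$ becomes $\overline{u}\,\overline{v} = \overline{uv}$. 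Together these force $\overline{w} = w$ for every $w \in X^+$, so ${\mathcal F}/\sigma \simeq X^*$ via $\sigma^{\natural}(x) = x$ and $\sigma^{\natural}(\overline{u}) = u$.

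Next I would show that for $w \in X^+$ the element $\overline{w}$ is the maximum in its $\sigma$-class. Given $a \in {\mathcal F}$ with $\sigma^{\natural}(a) = w$, I would invoke Proposition~\ref{prop:generation}(2) applied to ${\mathcal F}$ as an $(X \cup \overline{X^+})$-generated birestriction monoid and write $a = ev$ with $e \in P({\mathcal F})$ and $v = z_1 \cdots z_m$ where each $z_i \in X \cup \overline{X^+}$. Denoting by $w_i \in X^+$ the underlying word of $z_i$ (so $w_i = z_i$ if $z_i \in X$ and $w_i = u$ if $z_i = \overline{u}$), applying $\sigma^{\natural}$ gives $w = w_1 \cdots w_m$. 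Each $z_i$ satisfies $z_i \leq \overline{w_i}$: trivially when $z_i \in \overline{X^+}$, and by the defining relation $\overline{x} \geq x$ when $z_i = x \in X$. Hence $v \leq \overline{w_1} \cdots \overline{w_m}$, and iterating the inequality $\overline{u}\,\overline{v} \leq \overline{uv}$ (as observed just before Lemma~\ref{lem:u13d}) yields $\overline{w_1}\cdots \overline{w_m} \leq \overline{w_1 \cdots w_m} = \overline{w}$. Therefore $a = ev \leq v \leq \overline{w}$, as required.

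For $w = 1$ the $\sigma$-class is $P({\mathcal F})$, whose elements are all $\leq 1$, so $1$ is the maximum. Consequently every $\sigma$-class has a maximum, ${\mathcal F}$ is an $F$-birestriction monoid, and $\mx{a} = \overline{\sigma^{\natural}(a)}$ whenever $\sigma^{\natural}(a)\in X^+$ and $\mx{a} = 1$ otherwise. Finally, for any $u = x_1 \cdots x_n \in X^+$ we have $\overline{u} = \mx{(x_1 \cdots x_n)}$, so the extended generators $\overline{X^+}$ are reachable from $X$ using the $\mx{(\cdot)}$ operation, showing that ${\mathcal F}$ is $X$-generated as an $F$-birestriction monoid.

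The main obstacle is the second paragraph: the bound $a \leq \overline{w}$ requires decomposing $a$ into factors whose $\sigma$-images multiply to $w$ and each of which can be dominated by a suitable $\overline{w_i}$. The decomposition comes from Proposition~\ref{prop:generation}(2) and the compositional bound from the iterated form of the defining relations in $N$; the only subtlety is verifying that the $\sigma$-images of the factors of $v$ compose exactly to $w$, which is automatic since $e$ lies in the $\sigma$-class of $1$. Once this step is in place, the remaining assertions follow immediately.
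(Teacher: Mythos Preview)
Your proposal is correct and follows essentially the same route as the paper's proof: both compute ${\mathcal F}/\sigma\simeq X^*$ (you via Lemma~\ref{lem:generation2a}(1), the paper by checking directly that $X^*$ is the maximum reduced quotient), then use Proposition~\ref{prop:generation}(2) to write an arbitrary element as $ev$ and bound $v$ above by $\overline{\sigma^{\natural}(v)}$ using the iterated defining relations, and finally deduce $X$-generation from $\overline{w}=\mx{w}$. The only minor point is your assertion that the $\sigma$-class of $1$ equals $P({\mathcal F})$: this is not a general fact but follows here from your own decomposition, since $\sigma^{\natural}(a)=1$ forces $m=0$ (each $w_i\in X^+$) and hence $a=e\in P({\mathcal F})$; it would be worth making this explicit.
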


\begin{proof}
Note that $X^*$ is an $(X\cup \overline{X^+})$-generated monoid under the assignment map $x\mapsto x$, $x\in X$ and $\overline{w}\mapsto w$, $w\in X^+$, and is an $(X\cup \overline{X^+})$-generated reduced birestriction monoid which satisfies relations $N$. It follows that $X^*$ is an $(X\cup \overline{X^+})$-canonical quotient of ${\mathcal F}$. Note that in any reduced quotient of ${\mathcal F}$ the relations $N$ hold, in particular $\overline{w}=w$ holds for all $w\in X^+$. It follows that any monoid quotient of ${\mathcal F}$ is in fact $X$-generated, so that it is a quotient of $X^*$. We have shown that ${\mathcal F}/\sigma\simeq X^*$.

Let $u \in {\mathcal F}$. Applying Proposition \ref{prop:generation} we write $u=ev$ where $e$ is a projection and $v\in (X\cup \overline{X^+})^*$. If $v=1$, we have $\mx{u}=\mx{e}=1$. Otherwise, write $v = a_0\overline{b_1}a_1\overline{b_2}\cdots a_{n-1}\overline{b_n}a_{n}$ where $n\geq 0$, $b_i\in X^+$ and $a_i\in X^*$ for all $i$. 
Applying the defining relations of ${\mathcal F}$ and Lemma \ref{lem:u13d}, we have
\begin{equation}\label{eq:apr3a}
u \leq v \leq \,\, \overline{a_0}\,\overline{b_1} \cdots \overline{b_n}\,\overline{a_{n}}\leq \overline{a_0b_1\cdots b_na_{n}} = \overline{\sigma^{\natural}(u)},
\end{equation} 
which shows that the $\sigma$-class of $u$ has a maximal element, namely, the element $\overline{\sigma^{\natural}(u)}$. 

Finally, ${\mathcal F}$ is $X$-generated as an $F$-birestriction monoid  since it is $(X\cup \overline{X^+})$-generated as a birestriction monoid and $\overline{w} = \mx{w}$ for all $w\in X^+$.
\end{proof}

\begin{theorem}\label{th:isom} 
${\mathsf{FFBR}}(X)$ and ${\mathcal F}$ are  $(X\cup \overline{X^+})$-canonically $(2,1,1,0)$-isomorphic and $X$-canonically $(2,1,1,1,0)$-isomorphic.
\end{theorem}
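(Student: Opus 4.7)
The plan is to construct morphisms in both directions and show they are mutually inverse. By Lemma~\ref{lem:quotient1}, we already have a surjective $(X\cup\overline{X^+})$-canonical $(2,1,1,0)$-morphism $\alpha\colon \mathcal{F}\to {\mathsf{FFBR}}(X)$ sending $x\mapsto x$ and $\overline{u}\mapsto \mx{u}$. In the other direction, Lemma~\ref{lem:20a3} asserts that $\mathcal{F}$ is an $X$-generated $F$-birestriction monoid, so the universal property of ${\mathsf{FFBR}}(X)$ yields an $X$-canonical $(2,1,1,1,0)$-morphism $\beta\colon {\mathsf{FFBR}}(X)\to \mathcal{F}$. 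Note in particular that $\beta(\mx{u})=\mx{u}=\overline{u}$ in $\mathcal{F}$ for every $u\in X^+$, by Lemma~\ref{lem:20a3}.

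The one technical point is to promote $\alpha$ to a $(2,1,1,1,0)$-morphism, i.e.\ to show that $\alpha(\mx{a})=\mx{\alpha(a)}$ for every $a\in\mathcal{F}$. By Lemmas~\ref{lem:a20a} and~\ref{lem:20a3}, both $\mathcal{F}/\sigma$ and ${\mathsf{FFBR}}(X)/\sigma$ are $X$-canonically isomorphic to $X^*$, and since $\alpha$ fixes $X$, the induced quotient morphism is the identity on $X^*$. Hence $a$ and $\alpha(a)$ lie in $\sigma$-classes projecting to the same $w\in X^*$. If $w=1$ then both $\mx{a}$ and $\mx{\alpha(a)}$ equal $1$; if $w\in X^+$ then $\mx{a}=\overline{w}$ in $\mathcal{F}$ by Lemma~\ref{lem:20a3}, so
\[
\alpha(\mx{a})=\alpha(\overline{w})=\mx{w}=\mx{\alpha(a)}
\]
in ${\mathsf{FFBR}}(X)$, the last equality holding because $\mx{w}$ is by definition the maximum element in its $\sigma$-class.

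With both $\alpha$ and $\beta$ identified as $(2,1,1,1,0)$-morphisms, we finish via universal properties. The composition $\alpha\circ\beta$ is an $X$-canonical $(2,1,1,1,0)$-endomorphism of ${\mathsf{FFBR}}(X)$, hence the identity by the universal property of ${\mathsf{FFBR}}(X)$. Conversely, $\beta\circ\alpha$ is a $(2,1,1,0)$-endomorphism of $\mathcal{F}$ fixing the generators $X\cup\overline{X^+}$ (using that $\beta(\mx{u})=\overline{u}$), hence the identity by the universal property of $\mathcal{F}$ as a birestriction monoid. Therefore $\alpha$ and $\beta$ are mutually inverse, yielding both claimed isomorphisms simultaneously.

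The only genuine obstacle is the verification in the second paragraph that $\alpha$ respects $\mx{}$; once the explicit description of maxima in $\mathcal{F}$ from Lemma~\ref{lem:20a3} is available, it reduces to the observation that $\mx{}$ on each side is determined by the common $\sigma$-quotient $X^*$, and the remainder of the argument is a formal matching of universal properties.
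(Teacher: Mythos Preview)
Your proof is correct and follows essentially the same approach as the paper: both arguments construct the pair of canonical morphisms $\alpha\colon\mathcal{F}\to{\mathsf{FFBR}}(X)$ from Lemma~\ref{lem:quotient1} and $\beta\colon{\mathsf{FFBR}}(X)\to\mathcal{F}$ from the universal property together with Lemma~\ref{lem:20a3}, and then use the universal properties on each side to conclude they are mutually inverse. The only difference is presentational: the paper first establishes the $(2,1,1,0)$-isomorphism and then observes in one line that $\alpha$ respects maxima of $\sigma$-classes, whereas you verify $\alpha(\mx{a})=\mx{\alpha(a)}$ explicitly upfront; your more detailed verification via the common $\sigma$-quotient $X^*$ is a helpful elaboration of what the paper leaves implicit.
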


\begin{proof}
By the universal property of ${\mathsf{FFBR}}(X)$ and by Lemma \ref{lem:20a3} ${\mathcal F}$ is an $X$-canonical $(2,1,1,1,0)$-quotient and an $(X\cup \overline{X^+})$-canonical $(2,1,1,0)$-quotient of ${\mathsf{FFBR}}(X)$. Combining this
with Lemma \ref{lem:quotient1}, the first claim follows. Since the canonical quotient map of Lemma \ref{lem:quotient1} respects the maximum elements of $\sigma$-classes, the second claim follows, too.
\end{proof}

\subsection{The inverse monoid ${\mathcal I}$ and the projection semilattice of ${\mathsf{FFBR}}(X)$}
\label{subs:i}
From now on let $\psi$ denote the $(2,1,1,0)$-morphism 
$$
\psi\colon {\mathsf{FBR}} (X\cup \overline{X^+})\to {\mathsf{FI}}(X\cup \overline{X^+}),
$$
which is identical on $X\cup \overline{X^+}$.
We put
\begin{equation}\label{eq:a22e}
{\mathcal I} = {\mathrm{Inv}}\langle X\cup \overline{X^+} \mathrel{\vert} \psi(N) \rangle
\end{equation}
to be the  {\em universal inverse monoid}  of the birestriction monoid ${\mathcal F}$. That is, the identical map on  $X\cup \overline{X^+}$ extends to a $(X\cup \overline{X^+})$-canonical morphism of birestriction monoids ${\mathcal F} \to {\mathcal I}$ and every such a morphism from ${\mathcal F}$ to an $(X\cup \overline{X^+})$-generated inverse monoid factors  canonically through ${\mathcal I}$.

It follows from Proposition \ref{prop:isom_sem_rel} that
the isomorphisms 
$$
D: E({\mathsf{FI}}(X\cup \overline{X^+})) \to P({\mathsf{FBR} (X\cup \overline{X^+})})
$$ and
$$
\psi\colon P({\mathsf{FBR}}(X\cup \overline{X^+})) \to E({\mathsf{FI}}(X\cup \overline{X^+}))
$$
induce isomorphisms $D\colon E({\mathcal I}) \to P({\mathcal F})$ and $\psi\colon P({\mathcal F})\to E({\mathcal I})$.
Bearing in mind Theorem \ref{th:isom}, we have 
$P({\mathsf{FFBR}}(X))\simeq E({\mathcal I})$. Combining this with Theorem \ref{thm:proper}, Lemma \ref{lem:a20a} and Lemma \ref{lem:20a3}, we obtain the following result.
\begin{theorem} \label{th:main1}
Let $X$ be a non-empty set. The map $x\mapsto (x^+, x)$ extends to an  $X$-canonical $(2,1,1,1,0)$-isomorphism
$$
{\mathsf{FFBR}}(X) \simeq E({\mathcal I})\rtimes X^*.
$$

In detail, we have 
$$E({\mathcal I})\rtimes X^* = \{(e,u) \in E({\mathcal I})\times X^* \colon e\leq \overline{u}^+\}
$$
with the identity element $(1,1)$ and the remaining operations  given by
$$
(e,u)(f,v)=(e(\overline{u}f)^+,uv),
$$
$$
(e,u)^*=((e\overline{u})^*,1),\,\,(e,u)^+=(e,1) ,
$$
$$
\mx{(e,u)}=(\overline{u}^+,u).
$$
\end{theorem}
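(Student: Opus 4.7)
The plan is to obtain the isomorphism by applying the proper-structure theorem to $\mathsf{FFBR}(X)$ and then substituting for the two factors using the identifications developed earlier in the excerpt. Since $F$-birestriction monoids are proper (\cite[Lemma 5]{Kud15}), Theorem \ref{thm:proper} gives an $X$-canonical $(2,1,1,0)$-isomorphism
$$
\mathsf{FFBR}(X) \xrightarrow{\sim} P(\mathsf{FFBR}(X)) \rtimes_{\varphi} \mathsf{FFBR}(X)/\sigma, \qquad s \mapsto (s^+, [s]_\sigma),
$$
with $\varphi$ the underlying premorphism. Lemma \ref{lem:a20a} identifies $\mathsf{FFBR}(X)/\sigma$ with $X^*$ canonically, and Theorem \ref{th:isom} combined with Proposition \ref{prop:isom_sem_rel} gives $P(\mathsf{FFBR}(X)) \simeq P({\mathcal F}) \simeq E({\mathcal I})$. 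Substituting these into the partial action product above yields the required underlying set and unary/binary operations, with the generator $x \in X$ mapped to $(x^+, [x]_\sigma) = (x^+, x)$.

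Next, I would compute the premorphism $\varphi$ explicitly. By Lemma \ref{lem:20a3}, the maximum of the $\sigma$-class of ${\mathcal F}$ projecting onto $u \in X^+$ is $\overline{u}$ (and is $1$ for $u=1$), so this element may be used to describe $\varphi_u$. From the definition of the underlying premorphism recalled in Subsection~\ref{subs:proper_structure} I obtain
$$
\mathrm{dom}(\varphi_u) = \{e \in E({\mathcal I}) : e \leq \overline{u}^*\}, \qquad \varphi_u(e) = (\overline{u}\,e)^+,
$$
whence $\mathrm{ran}(\varphi_u) = \{e \in E({\mathcal I}) : e \leq \overline{u}^+\}$ (giving the claimed description of the underlying set) and, mirroring \eqref{eq:a6aa}--\eqref{eq:a6ab}, $\varphi_u^{-1}(e) = (e\overline{u})^*$. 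Inserting these into the generic formula \eqref{eq:multiplication} and simplifying with \eqref{eq:ample} and \eqref{eq:id1}, exactly as in the derivation of \eqref{def:oper1}--\eqref{def:oper2}, yields the displayed formulas for the multiplication, $^*$ and $^+$.

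Finally, to upgrade from a $(2,1,1,0)$- to a $(2,1,1,1,0)$-isomorphism, I would check that the map respects $\mx{(\cdot)}$. Since $(e,u) \mathrel{\sigma} (f,v)$ in the partial action product iff $u=v$, and since on $\{(f,u) : f \leq \overline{u}^+\}$ the natural partial order reduces to $(f,u) \leq (e,u) \iff f \leq e$, the maximum of the $\sigma$-class of $(e,u)$ is $(\overline{u}^+, u)$. On the other hand, for $s \in \mathsf{FFBR}(X)$ with $[s]_\sigma = u$, the maximum $\mx{s}$ equals $\overline{u}$, whose image under the isomorphism is $(\overline{u}^+, u)$. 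The two agree, which both confirms preservation of $\mx{(\cdot)}$ and establishes the last displayed formula. The principal obstacle is not conceptual but one of bookkeeping: one must consistently track the same element $\overline{u}^+$ across the canonical identifications $P(\mathsf{FFBR}(X)) \simeq P({\mathcal F}) \simeq E({\mathcal I})$ provided by Theorem \ref{th:isom} and Proposition \ref{prop:isom_sem_rel}, while simultaneously interpreting $\overline{u}$ as a generator of ${\mathcal F}$, an element of ${\mathcal I}$, and the $\mx{(\cdot)}$-image of $u$; once this dictionary is fixed, the remaining verifications are routine birestriction-identity manipulations.
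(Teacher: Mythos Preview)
Your proposal is correct and follows essentially the same route as the paper: apply the proper-structure theorem (Theorem~\ref{thm:proper}) to the proper monoid $\mathsf{FFBR}(X)$, identify the quotient factor via Lemma~\ref{lem:a20a} and the projection factor via Theorem~\ref{th:isom} together with Proposition~\ref{prop:isom_sem_rel}, and read off the explicit formulas using Lemma~\ref{lem:20a3} for the maximum element $\overline{u}$ of each $\sigma$-class. You have simply spelled out in more detail the computation of the underlying premorphism and the verification that $\mx{(\cdot)}$ is preserved, which the paper leaves implicit in its one-sentence derivation preceding the theorem.
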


\section{The word problem for ${\mathsf{FFBR}}(X)$ is decidable}\label{sec:wordproblem}
Theorem \ref{th:main1} reduces the word problem for ${\mathsf{FFBR}}(X)$ to that for the inverse monoid ${\mathcal I}$, so that we can apply the techniques developed by Stephen \cite{S90} to tackle it. For the undefined terminology used below we refer the reader to \cite{S90}.

\begin{theorem}\label{th:word}
Sch\"utzenberger graphs of elements of ${\mathcal I}$ are finite and effectively constructed. Consequently, the word problem for ${\mathcal I}$ and for ${\mathsf{FFBR}}(X)$ is decidable.
\end{theorem}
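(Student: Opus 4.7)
I would apply Stephen's iterative construction of Sch\"utzenberger graphs for ${\mathcal I}={\mathrm{Inv}}\langle X\cup\overline{X^+}\mid\psi(N)\rangle$ and show that the construction stabilises in effective finite time on any input word $w\in(X\cup\overline{X^+})^{\pm *}$. By Stephen's theorem this will yield decidability of the word problem for ${\mathcal I}$, and decidability for ${\mathsf{FFBR}}(X)$ will then follow from Theorem~\ref{th:main1}.

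\textbf{Set-up.} First I would rewrite the relations in $\psi(N)$ in inverse-monoid signature via $s\leq t\iff s=tt^{-1}s$, obtaining
\[
x=\overline{x}\,x^{-1}x\quad(x\in X),\qquad \overline{u}\,\overline{v}=\overline{uv}\,\overline{v}^{-1}\overline{u}^{-1}\overline{u}\,\overline{v}\quad(u,v\in X^+),
\]
and note via Lemma~\ref{lem:generation2a}(2) that both collapse to $\overline{w}=w$ modulo $\sigma$, giving ${\mathcal I}/\sigma\simeq{\mathsf{FG}}(X)$. Consequently every graph arising in Stephen's construction comes equipped with a canonical label-preserving projection $\pi$ to the Cayley graph $\Cay({\mathsf{FG}}(X), X\cup\overline{X^+})$, in which an $\overline{u}$-edge joins $g$ to $gu$.

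\textbf{Stabilisation.} The crux is to verify that no expansion ever introduces a vertex failing to fold onto a pre-existing one. The argument would proceed by a direct case analysis of both relations in both directions. Forward expansion of the first relation on an $x$-edge $v\to v'$ sews in a path $\overline{x}\,x^{-1}x$ whose two intermediate vertices fold onto $v'$ and $v$ by determinism / co-determinism against the original $x$-edge, leaving only a parallel $\overline{x}$-edge. Forward expansion of the second relation on adjacent edges $v\to^{\overline{u}}v'\to^{\overline{v}}v''$ sews in a path whose four intermediate vertices fold onto $\{v,v',v''\}$, leaving only the spanning $\overline{uv}$-edge. Each reverse expansion is applicable only when the entire right-hand-side path is already present, and the structure of that prerequisite path forces every inserted intermediate vertex to fold via determinism or co-determinism at the common $\overline{u}$- or $\overline{v}$-label. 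Hence $|V(S\Gamma(w))|\leq|w|+1$; since between each ordered pair of vertices $(p,q)$ only one candidate $\overline{u}$-label (with $u=\pi(p)^{-1}\pi(q)\in X^+$) and at most one $X^{\pm 1}$-label can arise, the edge count will be $O(|w|^2)$.

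\textbf{Conclusion and main obstacle.} Since the vertex and edge sets lie in effectively enumerable finite sets and Stephen's expansions are monotone, the procedure halts and returns $S\Gamma(w)$ explicitly; by Stephen's theorem, equality of two words in ${\mathcal I}$ then reduces to a decidable isomorphism test between finite birooted graphs. Via Theorem~\ref{th:main1}, equality in ${\mathsf{FFBR}}(X)\simeq E({\mathcal I})\rtimes X^*$ reduces to the trivial equality of $X^*$-projections together with equality in $E({\mathcal I})$, another instance of the word problem for ${\mathcal I}$. The most delicate step will be the no-new-vertex claim for the reverse direction of the second relation, where one must trace carefully how the prerequisite $\overline{uv}\,\overline{v}^{-1}\overline{u}^{-1}\overline{u}\,\overline{v}$-path already encodes a pair of adjacent $\overline{u}$- and $\overline{v}$-edges that force the inserted intermediate vertex to fold; a secondary subtlety is that since ${\mathcal I}$ is not $E$-unitary, $\pi$ need not be injective on vertices, so the vertex bound rests on Stallings folding rather than on the $\pi$-projection.
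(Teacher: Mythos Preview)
Your proposal is correct and follows essentially the same approach as the paper: run Stephen's procedure and verify case-by-case that every $P$-expansion arising from the relations $\psi(N)$, once determinised, adds only edges and never new vertices, so the Sch\"utzenberger graph has at most $|w|+1$ vertices. The paper uses the equivalent forms $x=xx^{-1}\overline{x}$ and $\overline{u}\,\overline{v}=\overline{u}\,\overline{v}(\overline{u}\,\overline{v})^{-1}\overline{uv}$ and treats the reverse direction only implicitly, while your explicit $O(|w|^2)$ edge bound via the projection $\pi$ to ${\mathsf{FG}}(X)$ gives a cleaner global termination argument than the paper's local ``fewer unfulfilled expansions'' counts; note the small slip in your write-up---the characterisation you actually apply is $s\leq t\iff s=ts^{-1}s$, not $s=tt^{-1}s$.
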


\begin{proof}
Let $u\in (Y\cup Y^{-1})^*$ where $Y=X\cup \overline{X^+}$. We aim to construct a finite series of elementary $P$-expansions and determinizations of the linear graph of $u$ which terminates in a finite closed graph. This graph is then ($V$-isomorphic) to the Sch\"utzenberger graph of $[u]_{{\mathcal I}}$ \cite[Theorem 5.10]{S90}.

We start from the linear graph of $u$ and apply several determinizations to obtain its determinized form which is necessarily a tree. If it is closed, we are done. Otherwise, we apply to it a series of elementary $P$-expansions and determinizations, as follows. We consider two possible cases of an elementary $P$-expansion which can be applied.

{\em Case 1.} Suppose that the elementary $P$-expansion stems from the relation $\overline{x}\geq x$, that is, $x=xx^{-1}\overline{x}$, where $x\in X$. The only possibility when this causes attaching a new path is when there is some edge $(\alpha,x,\beta)$ in the graph, but there is no edge $(\alpha,\overline{x}, \beta)$. Then after performing an elementary $P$-expansion (attaching the path from $\alpha$ to $\beta$ labeled by $xx^{-1}\overline{x}$) and several determinizations, we come to the graph obtained from the initial graph by adding the edge $(\alpha,\overline{x}, \beta)$, see Figure  \ref{fig:c1a}\footnote{On pictures, we draw the edges labeled by $X$ as solid lines and those labeled by ${\overline{X^+}}$ as dashed lines.}. If this graph is not determinized, which is the case when the initial graph contains an edge of the form $(\alpha, \overline{x}, \gamma)$ or $(\gamma, \overline{x},\beta)$, we determinize it and obtain a determinized graph with at most the same number of vertices as the initial graph and a fewer number of  edges of the form $(\alpha,x, \beta)$ such that the edge $(\alpha,\overline{x}, \beta)$ is not in the graph. 

\begin{figure}[!ht]
\centering
\begin{tikzpicture}[scale=0.6]
\begin{scope}[every node/.style={circle,fill,inner sep=1.5pt}]
\node[label= {[label distance=0.1cm]:$\alpha$}] (A) at (1,0) {};
\node[label= {[label distance=0.1cm]:{$\beta$}}] (B) at (6.5,0) {}; 
\end{scope}

\begin{scope}[>={Stealth[black]}, every node/.style={fill=white,circle,scale=0.7}, every edge/.style={draw, thick}]
\path [->] (A) edge[bend left=25] node[above=0.5pt] {$x$} (B);
\end{scope}
 
\node (P) at (8, 0) {};
\node (Q) at (10.8, 0) {};
\begin{scope},
\draw[-stealth,thick,decorate,decoration={snake,amplitude=.3mm}] (P) -- (Q) node[above,midway]{};
\end{scope}

\begin{scope}[every node/.style={circle,fill,inner sep=1.5pt},xshift=12cm]
\node[label={[label distance=0.1cm]:$\alpha$}] (A) at (0,0) {}; 
\node[label={[label distance=0.1cm]:}] (B) at (2.5,-0.5) {};
\node[label={[label distance=0.1cm]:}] (C) at (5,-0.5) {};
\node[label={[label distance=0.1cm]:$\beta$}] (D) at (7.5,0) {};
\end{scope}

\begin{scope}[>={Stealth[black]}, every node/.style={fill=white,circle,scale=0.7}, every edge/.style={draw, thick}]
\path [->] (A) edge[bend left=25] node[above=0.8pt] {$x$} (D);
\path [->] (A) edge[bend right=25] node[below=0.8pt]{$x$} (B);
\path [<-] (B) edge[bend right=25] node[below=0.8pt]{$x$} (C);
\path [->] (C) edge[dashed,bend right=25] node[below=0.8pt]{$\overline{x}$} (D);
\end{scope}

\node (P) at (8, -4) {};
\node (Q) at (10.8, -4) {};
\begin{scope},
\draw[-stealth,thick,decorate,decoration={snake,amplitude=.3mm}] (P) -- (Q) node[above,midway]{};
\end{scope}

\begin{scope}[every node/.style={circle,fill,inner sep=1.5pt},xshift=13.2cm]
\node[label={[label distance=0.1cm]:$\alpha$}] (A) at (0,-4) {}; 
\node[label={[label distance=0.1cm]:{$\beta$}}] (B) at (5.5,-4) {}; 
\end{scope}

\begin{scope}[>={Stealth[black]}, every node/.style={fill=white,circle,scale=0.7}, every edge/.style={draw, thick}]
\path [->] (A) edge[bend left=25] node[above=0.8pt] {$x$} (B);
\path [->] (A) edge[dashed,bend right=25] node[below=0.8pt]{$\overline{x}$} (B);
\end{scope}
\end{tikzpicture}
\caption{Illustration of Case 1.}\label{fig:c1a}
\end{figure}
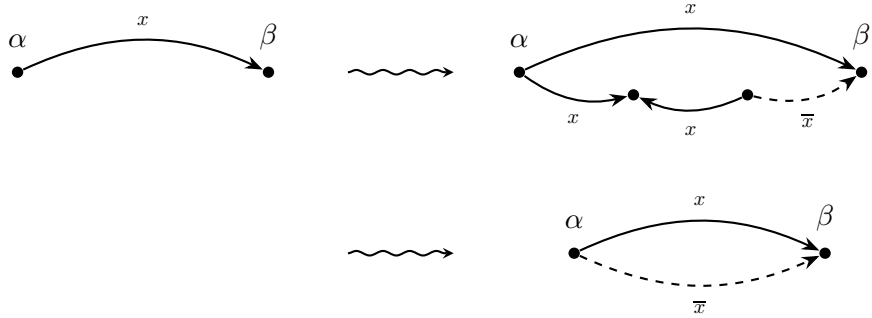
{\em Case 2.} Suppose that the elementary $P$-expansion stems from the relation $\overline{uv} \geq \overline{u}\, \overline{v}$ where $u,v\in X^+$, that is,   $\overline{u}\,\overline{v} = \overline{u}\,\overline{v}(\overline{u}\,\overline{v})^{-1}\overline{uv}$.  The only possibility when this causes attaching a path is when there are edges $(\alpha,\overline{u},\beta)$ and $(\beta,\overline{v},\gamma)$ in the graph, but there is no edge $(\alpha,\overline{uv}, \gamma)$. After performing an elementary $P$-expansion (attaching a path from $\alpha$ to $\beta$ labeled by  $\overline{u}\,\overline{v}(\overline{u}\,\overline{v})^{-1}\overline{uv}$) and several determinizations, we come to the graph obtained from the initial graph by adding the edge $(\alpha,\overline{uv}, \gamma)$, see Figure \ref{fig:c2a}. Similarly as in the previous case, we determinize the obtained graph (if applicable) and obtain a graph with at most the same number of vertices as the initial graph and with fewer number of pairs of edges of the form $(\alpha,\overline{u},\beta)$ and $(\beta,\overline{v},\gamma)$ such that the edge $(\alpha,\overline{uv}, \gamma)$ is not in the graph.

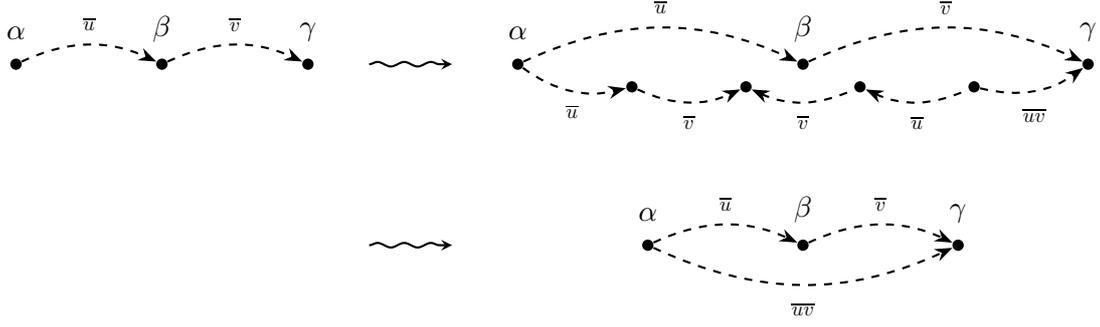
\begin{figure}[!ht]
\begin{tikzpicture}[scale=0.6]
\begin{scope}[every node/.style={circle,fill,inner sep=1.5pt}]
\node[label= {[label distance=0.1cm]:$\alpha$}] (A) at (0,0) {};
\node[label= {[label distance=0.1cm]:{$\beta$}}] (B) at (3.2,0) {};
\node[label= {[label distance=0.1cm]:{$\gamma$}}] (C) at (6.4,0) {};
\end{scope}

\begin{scope}[>={Stealth[black]}, every node/.style={fill=white,circle,scale=0.7}, every edge/.style={draw, thick}]
\path [->] (A) edge[dashed,bend left=25] node[above=0.5pt] {$\overline{u}$} (B);
\path [->] (B) edge[dashed, bend left=25] node[above=0.5pt] {$\overline{v}$} (C);
\end{scope}

\node (P) at (7.5, 0) {};
\node (Q) at (9.8, 0) {};
\begin{scope},
\draw[-stealth,thick,decorate,decoration={snake,amplitude=.3mm}] (P) -- (Q) node[above,midway]{};
\end{scope}

\begin{scope}[every node/.style={circle,fill,inner sep=1.5pt},xshift=11cm]
\node[label={[label distance=0.1cm]:{$\alpha$}}] (A) at (0,0) {}; 
\node[label={[label distance=0.1cm]:{$\beta$}}] (B) at (6.25,0) {};
\node[label={[label distance=0.1cm]:{$\gamma$}}] (C) at (12.5,0) {};
\node[label={[label distance=0.03cm]:}] (D) at (2.5,-0.5) {};
\node[label={[label distance=0.03cm]:}] (E) at (5,-0.5) {};
\node[label={[label distance=0.03cm]:}] (F) at (7.5,-0.5) {};
\node[label={[label distance=0.03cm]:}] (G) at (10,-0.5) {};
\end{scope}

\begin{scope}[>={Stealth[black]}, every node/.style={fill=white,circle,scale=0.7}, every edge/.style={draw, thick}]
\path [->] (A) edge[dashed,bend left=25] node[above=0.5pt] {$\overline{u}$} (B);
\path [->] (B) edge[dashed, bend left=25] node[above=0.5pt] {$\overline{v}$} (C);
\path [->] (A) edge[dashed, bend right=25] node[below=0.5pt] {$\overline{u}$} (D);
\path [->] (D) edge[dashed, bend right=25] node[below=0.5pt] {$\overline{v}$} (E);
\path [<-] (E) edge[dashed, bend right=25] node[below=0.5pt] {$\overline{v}$} (F);
\path [<-] (F) edge[dashed, bend right=25] node[below=0.5pt] {$\overline{u}$} (G);
\path [->] (G) edge[dashed, bend right=25] node[below=0.5pt] {$\overline{uv}$} (C);
\end{scope}
 
\node (P) at (7.5, -4) {};
\node (Q) at (9.8, -4) {};
\begin{scope},
\draw[-stealth,thick,decorate,decoration={snake,amplitude=.3mm}] (P) -- (Q) node[above,midway]{};
\end{scope}
\begin{scope}[every node/.style={circle,fill,inner sep=1.5pt},xshift=14.75cm]
\node[label={[label distance=0.1cm]:{$\alpha$}}] (A) at (-0.9,-4) {}; 
\node[label={[label distance=0.1cm]:{$\beta$}}] (B) at (2.5,-4) {};
\node[label={[label distance=0.1cm]:{$\gamma$}}] (C) at (5.9,-4) {};
\end{scope}

\begin{scope}[>={Stealth[black]}, every node/.style={fill=white,circle,scale=0.7}, every edge/.style={draw, thick}]
\path [->] (A) edge[dashed,bend left=25] node[above=0.5pt] {$\overline{u}$} (B);
\path [->] (B) edge[dashed, bend left=25] node[above=0.5pt] {$\overline{v}$} (C);
\path [->] (A) edge[dashed, bend right=25] node[below=0.5pt] {$\overline{uv}$} (C);
\end{scope}

\end{tikzpicture}
\caption{Illustration of Case 2.}\label{fig:c2a}
\end{figure}

It follows that in a finite number of steps we obtain a closed graph 
with at most the same number of vertices as the initial graph.  
This completes the proof.
\end{proof}

\section{The structure of the free (left, right) strong and perfect 
$F$-birestriction monoids}\label{s:arbitrary_structure}
Denote by ${\mathsf{FFBR}_{ls}}(X)$, ${\mathsf{FFBR}_{rs}}(X)$, ${\mathsf{FFBR}_s}(X)$ and ${\mathsf{FFBR}_{p}}(X)$ the free objects of the varieties  ${\mathbf{FBR}_{ls}}$, ${\mathbf{FBR}_{rs}}$, ${\mathbf{FBR}_s}$, ${\mathbf{FBR}_p}$, respectively. In this section we prove an analogue of Theorem \ref{th:main1} for these objects.

\begin{lemma}\label{lem:generation12a} ${\mathsf{FFBR}}_i(X)/\sigma \simeq X^*$,  for each $i\in \{ls, rs, s,p\}$. 
\end{lemma}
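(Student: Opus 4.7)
The plan is to repeat the argument of Lemma \ref{lem:a20a} in each of the four subvarieties, the only new ingredient being the observation that the free monoid $X^{*}$, with its trivial $F$-birestriction structure, already lies in the smallest of them. Concretely, view $X^{*}$ as an $F$-birestriction monoid with $u^{+}=u^{*}=1$ and $\mx{u}=u$ for every $u\in X^{*}$. Identity \eqref{eq:perf} then becomes $\mx{u}\,\mx{v}=uv=\mx{(uv)}$, which holds trivially, and \eqref{eq:left_s}, \eqref{eq:right_s} hold for the same reason since $(\mx{u})^{+}=(\mx{v})^{*}=1$. So $X^{*}$ is an object of each of the varieties ${\mathbf{FBR}_{ls}}$, ${\mathbf{FBR}_{rs}}$, ${\mathbf{FBR}_{s}}$, ${\mathbf{FBR}_{p}}$.

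Next, for each $i\in\{ls,rs,s,p\}$, the universal property of ${\mathsf{FFBR}}_{i}(X)$ applied to the $X$-generated object $X^{*}$ of the variety ${\mathbf{FBR}}_{i}$ yields an $X$-canonical surjective $(2,1,1,1,0)$-morphism $\pi_{i}\colon {\mathsf{FFBR}}_{i}(X)\to X^{*}$. Since $X^{*}$ is reduced, $\pi_{i}$ factors through the quotient map ${\mathsf{FFBR}}_{i}(X)\to{\mathsf{FFBR}}_{i}(X)/\sigma$, producing a surjective $(2,1,1,1,0)$-morphism $\bar{\pi}_{i}\colon {\mathsf{FFBR}}_{i}(X)/\sigma\to X^{*}$ that is identical on $X$.

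For the reverse direction, ${\mathsf{FFBR}}_{i}(X)/\sigma$ is a reduced $F$-birestriction monoid and therefore, regarded as a plain monoid, is $X$-generated: this is immediate from Proposition \ref{prop:generation1}(3), since the $X$-generated submonoid of ${\mathsf{FFBR}}_{i}(X)$ surjects onto ${\mathsf{FFBR}}_{i}(X)/\sigma$. Hence the universal property of the free monoid $X^{*}$ gives an $X$-canonical surjective monoid morphism $\eta_{i}\colon X^{*}\to {\mathsf{FFBR}}_{i}(X)/\sigma$. Both $\bar{\pi}_{i}$ and $\eta_{i}$ are $X$-canonical surjections between monoids generated by $X$, so they are mutually inverse, establishing the required isomorphism.

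No real obstacle is expected: the argument is a verbatim copy of that of Lemma \ref{lem:a20a}, and the only new step is the trivial verification that $X^{*}$ satisfies the additional identities \eqref{eq:left_s}--\eqref{eq:perf} cutting out each subvariety, which is automatic because all projections in $X^{*}$ collapse to the identity.
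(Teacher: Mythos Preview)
Your proof is correct and follows essentially the same route as the paper, which simply remarks that the argument is similar to that of Lemma~\ref{lem:a20a}. The only addition you make explicit is the verification that $X^{*}$ satisfies the extra identities \eqref{eq:left_s}, \eqref{eq:right_s}, \eqref{eq:perf}, which is precisely what is needed to carry the argument of Lemma~\ref{lem:a20a} over to each subvariety.
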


\begin{proof}
The proof is similar to the proof of Lemma \ref{lem:a20a}.  \end{proof}

By ${\mathrm{FBRestr}}\langle X \mathrel{\vert} R \rangle$ we denote the $F$-birestriction monoid generated by a set $X$ subject to the set of relations $R$ of the form $u=v$ where $u,v\in {\mathsf{FFBR}}(X)$.

\begin{proposition}\label{prop:presenations_f}\mbox{}
\begin{enumerate}
\item ${\mathsf{FFBR}_{ls}}(X) = {\mathrm{FBRestr}}\langle X \mathrel{\vert} \mx{u}\mx{v} = (\mx{u})^+\mx{(uv)}, \, u,v \in X^+\rangle$.
\item ${\mathsf{FFBR}_{rs}}(X) = {\mathrm{FBRestr}}\langle X \mathrel{\vert} \mx{u}\mx{v} = \mx{(uv)}(\mx{v})^*, \, u,v \in X^+\rangle$.
\item ${\mathsf{FFBR}_{s}}(X) = {\mathrm{FBRestr}}\langle X \mathrel{\vert} \mx{u}\mx{v} = (\mx{u})^+\mx{(uv)} = \mx{(uv)}(\mx{v})^*,\, u,v \in X^+\rangle$.
\item ${\mathsf{FFBR}_{p}}(X) = {\mathrm{FBRestr}}\langle X \mathrel{\vert} \mx{u}\mx{v} = \mx{(uv)}, \, u,v \in X^+\rangle$.
\end{enumerate}
\end{proposition}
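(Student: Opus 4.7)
The plan is to treat all four items in parallel. For $i \in \{ls, rs, s, p\}$, let $R_i$ denote the relation set listed in the corresponding item and set $S_i := {\mathrm{FBRestr}}\langle X \mathrel{\vert} R_i\rangle$. One direction of the desired isomorphism is formal: each relation in $R_i$ is a specific instance (with $u, v \in X^+$) of the identity defining $\mathbf{FBR}_i$, so these relations hold in ${\mathsf{FFBR}}_i(X)$, and hence the canonical morphism ${\mathsf{FFBR}}(X) \to {\mathsf{FFBR}}_i(X)$ factors $X$-canonically through $S_i$. The reverse canonical morphism ${\mathsf{FFBR}}_i(X) \to S_i$ will come from the universal property of ${\mathsf{FFBR}}_i(X)$ as soon as we know that $S_i$ itself lies in $\mathbf{FBR}_i$; the two maps will then be mutually inverse $X$-canonical isomorphisms.

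To verify $S_i \in \mathbf{FBR}_i$, I would first show $S_i/\sigma \simeq X^*$, repeating the argument of Lemma~\ref{lem:a20a}: $X^*$ is a reduced $X$-generated $F$-birestriction monoid in which every relation of $R_i$ holds trivially (in $X^*$ one has $\mx{u}=u$, all projections equal $1$, and each defining relation collapses to an identity of the monoid $X^*$), so $X^*$ is a $(2,1,1,1,0)$-quotient of $S_i$; conversely, $S_i/\sigma$ is $X$-generated as a monoid and hence a quotient of $X^*$. Because the $\sigma$-quotient ${\mathsf{FFBR}}(X) \to X^*$ factors through $S_i$, the resulting monoid map $X^* \to S_i$ is a section, so every $\sigma$-class of $S_i$ contains a unique representative in $X^*$.

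It then remains to check that the defining identity of $\mathbf{FBR}_i$ holds on arbitrary $x, y \in S_i$. Since $\mx{(\cdot)}$ is constant on $\sigma$-classes, we may replace $x, y$ by their canonical representatives $u, v \in X^*$. When $u, v \in X^+$ the identity is literally a relation of $R_i$. When $u = 1$, we have $\mx{x}=1$ and $xy \mathrel{\sigma} y$, so the identity collapses using $\mx{1}=1$ together with the birestriction identities $z^+z = z$ and $zz^* = z$; the case $v = 1$ is symmetric. The main obstacle is precisely this reduction, namely confirming that $R_i$, restricted to $X^+$, forces the full varietal identity on $S_i$. It is resolved by the two ingredients above ($\sigma$-invariance of $\mx{(\cdot)}$ and $S_i/\sigma = X^*$), and the uniform treatment then covers all four subvarieties with only cosmetic differences among the boundary verifications for the left strong, right strong, strong and perfect cases.
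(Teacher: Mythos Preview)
Your proposal is correct and follows essentially the same route as the paper's proof: show $S_i/\sigma \simeq X^*$, identify the $X$-generated submonoid of $S_i$ with $X^*$ (the paper phrases this via Proposition~\ref{prop:generation1}(3) rather than as a ``section''), and then replace arbitrary $x,y\in S_i$ by their $\sigma$-representatives in $X^*$ so that the varietal identity reduces to a defining relation of $R_i$. Your explicit treatment of the boundary cases $u=1$ or $v=1$ is a small refinement; the paper absorbs these silently since $\mx{1}=1$ makes the identity automatic there.
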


\begin{proof}
(1) Put $F={\mathrm{FBRestr}}\langle X \mathrel{\vert} \mx{u}\mx{v} = (\mx{u})^+\mx{(uv)}, \, u,v \in X^+\rangle$. It suffices to show that the identity \eqref{eq:left_s} is satisfied in $F$. Since the defining relations of $F$ hold in $X^*$, the identity map on $X$ extends to the $(2,1,1,1,0)$-morphism $F\to X^*$. Since any monoid quotient of $F$ is an $X$-generated monoid, we have that $F/\sigma \simeq X^*$.
By Proposition \ref{prop:generation1}(3) the $X$-generated submonoid of $F$ coincides with $X^*$. Let $u,v\in F$ and put $u' = \sigma^{\natural}(u) \in X^*$ and $v'=\sigma^{\natural}(v)\in X^*$. Since $u \mathrel{\sigma} u'$, $v \mathrel{\sigma} v'$ and $uv \mathrel{\sigma} u'v'$, it follows that $\mx{u} = \mx{(u')}$, $\mx{v} = \mx{(v')}$ and $\mx{(uv)} = \mx{(u'v')}$. Then $\mx{u}\mx{v} =\mx{(u')}\mx{(v')} = (\mx{(u')})^+\mx{(u'v')} = (\mx{u})^+\mx{(uv)}$, where for the second equality we used a defining relation of $F$, so $F$ satisfies the identity \eqref{eq:left_s} and the statement follows.

(2) and (4) are proved similarly, and (3) follows from (1) and (2).
\end{proof}   

\begin{lemma}\label{lem:quotient1a}
Let $F$ be an $F$-birestriction monoid and let $\rho$ be a $(2,1,1,0)$-congruence on $F$ such that $\rho\subseteq \sigma$. Then $\rho$ is in fact a $(2,1,1,1,0)$-congruence. 
\end{lemma}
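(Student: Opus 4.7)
The plan is to argue that $\rho$ automatically respects the unary operation $\mx{(\cdot)}$ because this operation is already constant on $\sigma$-classes, and $\rho$ refines $\sigma$ by hypothesis.

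First I would recall the key observation, made in Section~\ref{sec:varieties} just before the start of Section~\ref{sec:structureFFR}: for any $F$-birestriction monoid $F$ and any $a,b\in F$, one has $a\mathrel{\sigma} b$ if and only if $\mx{a}=\mx{b}$. Indeed, $\mx{a}$ is by definition the unique maximum element of the $\sigma$-class of $a$, so $a$ and $b$ lie in the same $\sigma$-class precisely when these maxima coincide.

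Next, suppose $a\mathrel{\rho} b$. Since $\rho\subseteq\sigma$, we have $a\mathrel{\sigma} b$, and hence $\mx{a}=\mx{b}$ by the observation above. In particular $\mx{a}\mathrel{\rho}\mx{b}$ by reflexivity of $\rho$. Thus $\rho$ respects the unary operation $\mx{(\cdot)}$, and since it already respects $\cdot$, $^*$, $^+$ and $1$ by assumption, it is a $(2,1,1,1,0)$-congruence, as claimed.

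There is essentially no obstacle: the statement is a direct consequence of the fact that $\mx{(\cdot)}$ is a ``section'' of $\sigma^\natural$ that depends only on the $\sigma$-class. The content of the lemma is really the conceptual one of recording that, when building congruences on $F$-birestriction monoids that sit below $\sigma$, one need not verify preservation of $\mx{(\cdot)}$ separately; this is what will be used in subsequent sections when passing between $(2,1,1,0)$- and $(2,1,1,1,0)$-quotients of ${\mathcal F}$.
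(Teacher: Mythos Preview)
Your proof is correct and in fact more direct than the paper's. You observe that $\mx{(\cdot)}$ is literally constant on $\sigma$-classes, so $a\mathrel{\rho}b\Rightarrow a\mathrel{\sigma}b\Rightarrow \mx{a}=\mx{b}$, and compatibility with $\mx{(\cdot)}$ is immediate. The paper instead works in the quotient $F/\rho$: it shows that $[\mx{a}]_\rho$ is an upper bound for every $[b]_\rho$ with $b\in[a]_\sigma$, and that every element $\sigma$-related to $[\mx{a}]_\rho$ in $F/\rho$ arises this way, concluding $[\mx{a}]_\rho=\mx{[a]_\rho}$. This yields the extra information that $F/\rho$ is itself $F$-birestriction with the expected maxima, but for the bare statement of the lemma your one-line argument suffices and is cleaner.
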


\begin{proof}
For $a\in F$ let $[a]_{\rho} = \{b\in F\colon a\mathrel{\rho} b\}\in F/\rho$ be the $\rho$-class of $a$ and $[a]_{\sigma} = \{b\in F\colon b\mathrel{\sigma} a\} \in F/\sigma$ be the $\sigma$-class of $a$. By assumption we have $[a]_{\rho} \subseteq [a]_{\sigma}$. For any $b \in [a]_{\sigma}$ we have $\mx{a}\geq b$, so that $b=\mx{a}b^*$, which implies $[b]_{\rho} = [\mx{a}]_{\rho}[b]_{\rho}^*$, so that $[\mx{a}]_{\rho}\geq [b]_{\rho}$. Since any element which is $\sigma$-related with $[\mx{a}]_{\rho}$ in $F/\rho$ has the form $[b]_{\rho}$ where $b \in [a]_{\sigma}$, it follows that  $[\mx{a}]_{\rho} = \mx{[a]_{\rho}}$, so that $\rho$ respects the operation $u \mapsto \mx{u}$ and is thus a $(2,1,1,1,0)$-congruence.
\end{proof}

\begin{lemma} \label{lem:cong12a} Let $F$ be an $F$-birestriction monoid and $R \subseteq F\times F$ a relation on $F$ such that $R\subseteq \sigma$. Then the $(2,1,1,0)$-congruence on $F$ generated by $R$ coincides with the $(2,1,1,1,0)$-congruence on $F$ generated by $R$.
\end{lemma}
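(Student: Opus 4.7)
The plan is very short because the previous lemma does almost all the work. Let $\rho_0$ denote the $(2,1,1,0)$-congruence on $F$ generated by $R$ and $\rho_1$ the $(2,1,1,1,0)$-congruence on $F$ generated by $R$. One inclusion $\rho_0 \subseteq \rho_1$ is immediate, because every $(2,1,1,1,0)$-congruence is in particular a $(2,1,1,0)$-congruence containing $R$, so by the universal (minimality) property of $\rho_0$ we get $\rho_0 \subseteq \rho_1$.

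For the reverse inclusion I would argue as follows. Since $F$ is an $F$-birestriction monoid, $\sigma$ is a $(2,1,1,0)$-congruence on $F$ (it is in fact a $(2,1,1,1,0)$-congruence, but we only need the weaker property). By hypothesis $R \subseteq \sigma$, and $\rho_0$ is the least $(2,1,1,0)$-congruence containing $R$, hence $\rho_0 \subseteq \sigma$. Now Lemma~\ref{lem:quotient1a} applies to $\rho_0$ and tells us that $\rho_0$ is in fact a $(2,1,1,1,0)$-congruence. Since $\rho_0$ contains $R$ and $\rho_1$ is the least $(2,1,1,1,0)$-congruence with this property, we conclude $\rho_1 \subseteq \rho_0$, which completes the proof.

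There is no genuine obstacle here: the whole content sits inside Lemma~\ref{lem:quotient1a}, which tells us that any $(2,1,1,0)$-congruence contained in $\sigma$ automatically respects $\mx{(\cdot)}$. The only thing to verify when writing it out carefully is the observation that $\sigma$ itself is a $(2,1,1,0)$-congruence, which was recorded just before Section~\ref{sec:structureFFR} (and which is what makes the hypothesis $R \subseteq \sigma$ usable in the generation argument).
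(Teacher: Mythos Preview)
Your proposal is correct and follows essentially the same argument as the paper: both show the trivial inclusion of the $(2,1,1,0)$-congruence in the $(2,1,1,1,0)$-congruence, then use $R\subseteq\sigma$ and minimality to get the $(2,1,1,0)$-congruence inside $\sigma$, invoke Lemma~\ref{lem:quotient1a} to upgrade it to a $(2,1,1,1,0)$-congruence, and conclude by minimality of the latter. The only cosmetic difference is that the paper records the full chain $R\subseteq\rho_2\subseteq\rho_1\subseteq\sigma$ (using that $\sigma$ is a $(2,1,1,1,0)$-congruence), whereas you only need and use $\rho_0\subseteq\sigma$.
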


\begin{proof}
Let $\rho_1$ be the $(2,1,1,1,0)$-congruence on $F$ generated by $R$ and $\rho_2$ the $(2,1,1,0)$-congruence on $F$ generated by $R$. It is clear that $\rho_2\subseteq \rho_1$. Since $\sigma$ is a $(2,1,1,1,0)$-congruence, we have $R\subseteq \rho_2\subseteq \rho_1 \subseteq \sigma$. By Lemma \ref{lem:quotient1a} it follows that $\rho_2$ is a $(2,1,1,1,0)$-congruence, so that $\rho_2=\rho_1$ by the minimality of $\rho_1$.
\end{proof}

We remind the reader that the set of relations $N$ is defined in \eqref{def:N} as 
$$
N = \{\overline{x}\geq x, \overline{uv} \geq \overline{u}\,\overline{v}\colon x\in X, u,v \in X^+\}.
$$
We also recall that the birestriction monoid ${\mathcal F}$ and the inverse monoid ${\mathcal I}$ are defined by
$${\mathcal F} = {\mathrm{BRestr}} \langle X\cup \overline{X^+} \mathrel{\vert} N\rangle, \,\,
{\mathcal I} = {\mathrm{Inv}}\langle X\cup \overline{X^+} \mathrel{\vert} \psi(N) \rangle,$$
where $\psi\colon {\mathsf{FBR}}(X\cup \overline{X^+}) \to {\mathsf{FI}}(X\cup \overline{X^+})$ is the $(X\cup \overline{X^+})$-canonical $(2,1,1,0)$-morphism.

We now define the following sets of relations on ${\mathsf{FBR}}(X\cup \overline{X^+})$:
\begin{equation}\label{def:Nls}
N_{ls} = \{\overline{x} \geq x, \, \overline{u}\,\overline{v} = \overline{u}^+\overline{uv} \colon x\in X,\, u,v\in X^+\},
\end{equation}
\begin{equation}\label{def:Nrs}
N_{rs} = \{\overline{x} \geq x, \, \overline{u}\,\overline{v} = \overline{uv}\,\overline{v}^* \colon x\in X, \,
u,v\in X^+\},
\end{equation}
\begin{equation}\label{def:Np1}
N_p = \{\overline{x} \geq x, \, \overline{uv} = \overline{u}\,\overline{v}\colon x\in X, \, u,v\in X^+\},
\end{equation}
$N_s = N_{ls}\cup N_{rs}$ and $N_{\varnothing} = N$.
We further put
$${\mathcal F}_{i} =  {\mathrm{BRestr}}\langle X\cup \overline{X^+} \mathrel{\vert} N_{i}\rangle, \,\, i\in \{ls, rs,s,p, \varnothing\}.$$

Note that ${\mathcal F}_{\varnothing} = {\mathcal F}$. Let $\rho_N$ be the $(2,1,1,0)$-congruence on ${\mathsf{FBR}}(X\cup \overline{X^+})$ generated by $N$ and $\rho_{N_i}$ the $(2,1,1,1,0)$-congruence on ${\mathsf{FBR}}(X\cup \overline{X^+})$ generated by $N_i$, for each $i\in\{ls, rs, s, p\}$. Since relations $N_i$ imply relations $N$, we have $\rho_{N}\subseteq \rho_{N_i}$ for each $i\in\{ls, rs, s, p\}$. From \cite[Theorem 6.15]{BS81} it follows that ${\mathcal F}_i$ is a $(2,1,1,0)$-quotient of ${\mathcal F}$ by the congruence $\rho_{N_i}/\rho_{N}$.
Since ${\mathcal F}/\sigma \simeq {\mathcal F_i}/\sigma\simeq X^*$, the congruence $\rho_{N_i}/\rho_{N}$ on ${\mathcal F}$  is contained in $\sigma$, so that it is a $(2,1,1,1,0)$-congruence by Lemma \ref{lem:quotient1a}. Since the image of the $(2,1,1,1,0)$-congruence $\rho_{N_i}/\rho_{N}$ under the isomorphism of ${\mathcal F}$ and ${\mathsf{FFBR}}(X)$ coincides with the $(2,1,1,1,0)$-congruence generated by the defining relations of ${\mathsf{FFBR}}_{i}(X)$ given in Proposition \ref{prop:presenations_f}(1), we have proved the following. 

\begin{theorem}\label{th:structure_rel_free}\mbox{} For each  $i\in \{ls, rs,s,p\}$ the  $F$-birestriction monoids ${\mathsf{FFBR}}_{i}(X)$ and ${\mathcal F}_{i}$ are $X$-canonically $(2,1,1,1,0)$-isomorphic and $(X\cup \overline{X^+})$-canonically $(2,1,1,0)$-isomorphic.
\end{theorem}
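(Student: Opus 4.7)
The plan is to reduce the theorem to a direct application of Theorem~\ref{th:isom} combined with Lemmas~\ref{lem:quotient1a} and~\ref{lem:cong12a}. Under the $(X\cup\overline{X^+})$-canonical isomorphism ${\mathcal F}\simeq{\mathsf{FFBR}}(X)$, the extended generator $\overline{u}$ corresponds to $\mx{u}$ for $u\in X^+$, so each defining relation of ${\mathcal F}_i$ translates term-by-term into a defining relation of ${\mathsf{FFBR}}_i(X)$ from Proposition~\ref{prop:presenations_f}. The whole theorem is essentially bookkeeping about which signature the congruences live in.

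First I would observe that each $N_i$ contains $N$, so $\rho_{N_i}\supseteq\rho_N$ and ${\mathcal F}_i$ is the $(2,1,1,0)$-quotient of ${\mathcal F}$ by the $(2,1,1,0)$-congruence $\rho_{N_i}/\rho_N$. Next, since $X^*$ satisfies every relation in $N_i$ (trivially, as $X^*$ is reduced and thus $\overline{u}$ is forced equal to $u$ there), the identity on $X$ extends to a canonical monoid morphism ${\mathcal F}_i\to X^*$; together with the fact that any monoid quotient of ${\mathcal F}_i$ is $X$-generated, this yields ${\mathcal F}_i/\sigma\simeq X^*$. Composition of the quotient maps ${\mathcal F}\to{\mathcal F}_i\to X^*$ with the identification ${\mathcal F}/\sigma\simeq X^*$ then shows $\rho_{N_i}/\rho_N\subseteq\sigma$ on~${\mathcal F}$.

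The key step is to promote this $(2,1,1,0)$-congruence to the right signature and match it with the corresponding congruence on ${\mathsf{FFBR}}(X)$. By Lemma~\ref{lem:quotient1a} a $(2,1,1,0)$-congruence on an $F$-birestriction monoid contained in $\sigma$ is automatically a $(2,1,1,1,0)$-congruence, so ${\mathcal F}_i$ inherits an $F$-birestriction monoid structure from ${\mathcal F}$. Transporting $\rho_{N_i}/\rho_N$ along the isomorphism of Theorem~\ref{th:isom}, the relations in $N_i$ turn into the relations of Proposition~\ref{prop:presenations_f} (for instance $\overline{u}\,\overline{v}=\overline{u}^+\overline{uv}$ becomes $\mx{u}\mx{v}=(\mx{u})^+\mx{(uv)}$, which is exactly the defining relation for ${\mathsf{FFBR}}_{ls}(X)$, and similarly for the other three cases). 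Lemma~\ref{lem:cong12a} then tells us that the $(2,1,1,0)$-congruence on ${\mathsf{FFBR}}(X)$ generated by these relations coincides with the $(2,1,1,1,0)$-congruence generated by them, which is precisely the one cutting out ${\mathsf{FFBR}}_i(X)$. This yields both the $(2,1,1,0)$-isomorphism over $X\cup\overline{X^+}$ and, since $\overline{u}$ and $\mx{u}$ are identified, the $X$-canonical $(2,1,1,1,0)$-isomorphism.

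The potentially thorny point —~and the reason Lemmas~\ref{lem:quotient1a} and~\ref{lem:cong12a} were set up just beforehand —~is reconciling the two signatures: on ${\mathcal F}$ we only have a birestriction structure and a $(2,1,1,0)$-congruence, while ${\mathsf{FFBR}}_i(X)$ is defined by a $(2,1,1,1,0)$-congruence. Once the containment $\rho_{N_i}/\rho_N\subseteq\sigma$ is verified, the two lemmas collapse this distinction and the isomorphism falls out. No further calculation beyond recognizing the defining relations on both sides is required.
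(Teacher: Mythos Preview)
Your proposal is correct and follows essentially the same route as the paper's proof: both argue that $N_i$ implies $N$ so that ${\mathcal F}_i$ is a $(2,1,1,0)$-quotient of ${\mathcal F}$ by $\rho_{N_i}/\rho_N$, observe that ${\mathcal F}_i/\sigma\simeq X^*$ forces $\rho_{N_i}/\rho_N\subseteq\sigma$, invoke Lemma~\ref{lem:quotient1a} to upgrade to a $(2,1,1,1,0)$-congruence, and then transport along Theorem~\ref{th:isom} to match the defining relations of Proposition~\ref{prop:presenations_f}. One minor point: you write ``each $N_i$ contains $N$'', but strictly speaking $N_i$ only \emph{implies} $N$ (e.g.\ $\overline{u}\,\overline{v}=\overline{u}^+\overline{uv}$ yields $\overline{u}\,\overline{v}\leq\overline{uv}$ rather than being that relation literally); this does not affect the argument since what matters is $\rho_N\subseteq\rho_{N_i}$, which you state correctly.
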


Towards the coordinatization of ${\mathsf{FFBR}}_i(X)$, we further put 
\begin{equation}\label{eq:a22c}
{\mathcal I}_{i}=\mathrm{Inv}\langle X\cup \overline{X^+} \mathrel{\vert}\psi(N_i)\rangle,
\end{equation}
for each $i \in \{ ls,rs,s,p,\varnothing\}$ 
to be the universal  inverse monoid of the $(X\cup \overline{X^+})$-generated birestriction monoid ${\mathcal F}_i$. Note that ${\mathcal I}_{\varnothing}= {\mathcal I}$. By Proposition \ref{prop:isom_sem_rel} we have that $P({\mathcal F}_i) \simeq E({\mathcal I}_i)$. Since ${\mathcal F}_i$ is $F$-birestriction, it is proper and thus decomposes into the partial action product
\begin{equation}\label{eq:m9b}
{\mathcal F}_i \simeq P({\mathcal F}_i) \rtimes 
{\mathcal F}_i/\sigma \simeq E({\mathcal I}_{i}) \rtimes 
X^*,  \,\, i\in \{ls, rs,s,p,\varnothing\}.
\end{equation}

We have proved the following analogue of Theorem \ref{th:main1}.

\begin{theorem}\label{th:main1aa}
Let $X$ be a non-empty set and $i\in \{ls,rs,s,p\}$. The map $x\mapsto (x^+, x)$ extends to an  $X$-canonical $(2,1,1,1,0)$-isomorphism
$$
{\mathsf{FFBR}}_i(X) \simeq E({\mathcal I}_i)\rtimes X^*.
$$    
\end{theorem}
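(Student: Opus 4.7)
The plan is to chain together results already established in the preceding section, paralleling the argument that proves Theorem~\ref{th:main1}. By Theorem~\ref{th:structure_rel_free}, there is an $X$-canonical $(2,1,1,1,0)$-iso\-mor\-phism ${\mathsf{FFBR}}_i(X)\simeq {\mathcal F}_i$, so it suffices to establish the corresponding canonical identification of ${\mathcal F}_i$ with $E({\mathcal I}_i)\rtimes X^*$. This is essentially recorded in \eqref{eq:m9b}; what remains is to unpack the canonicity and upgrade from signature $(2,1,1,0)$ to $(2,1,1,1,0)$.

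First, since ${\mathcal F}_i$ is $F$-birestriction (as it inherits this structure from ${\mathsf{FFBR}}_i(X)$), it is proper by \cite[Lemma~5]{Kud15}, so Theorem~\ref{thm:proper} yields a canonical $(2,1,1,0)$-iso\-mor\-phism ${\mathcal F}_i \simeq P({\mathcal F}_i)\rtimes_{\varphi} {\mathcal F}_i/\sigma$. Second, Proposition~\ref{prop:isom_sem_rel} provides the semilattice isomorphism $\psi\colon P({\mathcal F}_i)\xrightarrow{\sim} E({\mathcal I}_i)$, which transports the partial action of ${\mathcal F}_i/\sigma$ accordingly. Third, Lemma~\ref{lem:generation12a} combined with Theorem~\ref{th:structure_rel_free} identifies ${\mathcal F}_i/\sigma$ canonically with $X^*$. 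Composing these isomorphisms gives a $(2,1,1,0)$-isomorphism ${\mathcal F}_i \to E({\mathcal I}_i)\rtimes X^*$ under which a generator $x\in X$ is sent to $(x^+,[x]_{\sigma})=(x^+,x)$, as the first coordinate is a projection fixed by $\psi$ and the second is the image of $[x]_{\sigma}$ in $X^*$.

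The main delicate point is to pass from a $(2,1,1,0)$-isomorphism to a $(2,1,1,1,0)$-isomorphism. This is resolved by the observation that on any $F$-birestriction monoid, $\mx{(\cdot)}$ is uniquely determined by the underlying $(2,1,1,0)$-structure, since it picks out the (unique) maximum element of each $\sigma$-class. Thus, transporting the operation $\mx{(\cdot)}$ from ${\mathcal F}_i$ through the isomorphism, one sees that $E({\mathcal I}_i)\rtimes X^*$ becomes an $F$-birestriction monoid with $\mx{(e,u)}=(\overline{u}^+,u)$ (exactly as in Theorem~\ref{th:main1}), and the isomorphism preserves $\mx{(\cdot)}$ automatically. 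This yields the desired $X$-canonical $(2,1,1,1,0)$-isomorphism, completing the argument.
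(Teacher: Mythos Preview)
Your proposal is correct and follows essentially the same approach as the paper: both arguments chain together Theorem~\ref{th:structure_rel_free}, properness of ${\mathcal F}_i$, Theorem~\ref{thm:proper}, Proposition~\ref{prop:isom_sem_rel}, and the identification ${\mathcal F}_i/\sigma\simeq X^*$ to arrive at \eqref{eq:m9b}. You spell out more explicitly than the paper why the $(2,1,1,0)$-isomorphism is automatically a $(2,1,1,1,0)$-isomorphism (since $\mx{(\cdot)}$ is determined by the natural partial order and $\sigma$), which is a helpful addition.
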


\section{The word problem for the free (left, right) strong  
$F$-birestriction monoids is decidable}\label{sec:word_rel}
By Theorem \ref{th:structure_rel_free} and \eqref{eq:m9b}, the word problem for each of ${\mathsf{FFBR}_{i}}(X)$, $i \in \{ls,rs,s,p\}$, reduces to the word problem for the inverse monoid ${\mathcal I}_{i}$.

\begin{proposition} 
For each $i \in \{ls,rs,s\}$ the Sch\"utzenberger graphs of elements of ${\mathcal I}_{i}$ are finite and effectively constructed. Consequently, the word problem for ${\mathcal I}_{i}$ is decidable.
\end{proposition}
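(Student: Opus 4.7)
The plan is to follow the approach of the proof of Theorem~\ref{th:word}. Given a word $u\in (Y\cup Y^{-1})^*$ with $Y=X\cup \overline{X^+}$, I begin with the linear graph of $u$ and iteratively apply elementary $P$-expansions coming from the defining relations $\psi(N_i)$ of ${\mathcal I}_i$, followed by determinizations. If this process terminates in a finite closed graph, then by \cite[Theorem~5.10]{S90} the resulting graph is $V$-isomorphic to the Sch\"utzenberger graph of $[u]_{{\mathcal I}_i}$, and from a finite, effectively constructed Sch\"utzenberger graph the decidability of the word problem follows.

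Since $\psi(N)\subseteq \psi(N_i)$, Cases~1 and~2 analyzed in the proof of Theorem~\ref{th:word} still apply verbatim. The new relations in $N_i\setminus N$ are equalities rather than inequalities, so they give rise to additional elementary $P$-expansions. For $i=ls$ the new relation $\overline{u}\,\overline{v}=\overline{u}^+\overline{uv}$, written in generators as $\overline{u}\,\overline{v}=\overline{u}\,\overline{u}^{-1}\overline{uv}$, yields, besides the Case~2 expansion, a new expansion in the reverse direction: whenever a vertex $\alpha$ has outgoing edges $(\alpha,\overline{u},\gamma)$ and $(\alpha,\overline{uv},\beta)$ but no edge $(\gamma,\overline{v},\beta)$, attaching an $\overline{u}\,\overline{v}$-labeled path from $\alpha$ to $\beta$ and determinizing contributes exactly the single missing edge $(\gamma,\overline{v},\beta)$. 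For $i=rs$ the new relation $\overline{u}\,\overline{v}=\overline{uv}\,\overline{v}^*$ gives, symmetrically, the expansion: if $(\alpha,\overline{uv},\beta)$ and $(\gamma,\overline{v},\beta)$ are present but $(\alpha,\overline{u},\gamma)$ is not, then after the expansion and determinization exactly the edge $(\alpha,\overline{u},\gamma)$ is added. For $i=s$ both new expansions apply.

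The technical heart of the argument, and the main obstacle, is to verify that each of the new elementary $P$-expansions, after determinization, yields at most one new edge and introduces no new vertices. This is a case-by-case bouquet-absorption analysis entirely analogous to Figures~\ref{fig:c1a} and~\ref{fig:c2a} in the proof of Theorem~\ref{th:word}: the intermediate vertices of the attached bouquet get identified with existing vertices via the determinism of incoming/outgoing $\overline{u}$- and $\overline{v}$-edges, and only the final missing edge survives. Once this is established, the vertex count is bounded throughout by $|u|+1$.

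For termination of the edge count I would observe that ${\mathcal I}_i/\sigma\simeq {\mathsf{FG}}(X)$, since all defining relations in $\psi(N_i)$ project to trivial identities in the universal group (forcing $\overline{w}=w$ there). Hence every vertex $\alpha$ carries a well-defined projection $\sigma(\alpha)\in {\mathsf{FG}}(X)$, and any edge $(\alpha,a,\beta)$ satisfies $\sigma(a)=\sigma(\alpha)^{-1}\sigma(\beta)$ in ${\mathsf{FG}}(X)$. Since for a fixed element of ${\mathsf{FG}}(X)$ only boundedly many generators from $X\cup \overline{X^+}\cup X^{-1}\cup (\overline{X^+})^{-1}$ project onto it, the number of edges between any two fixed vertices is bounded; combined with the vertex bound this forces termination in finitely many steps. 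Effectiveness is clear, since the applicability of each $P$-expansion is decidable, and the decidability of the word problem then follows as in Theorem~\ref{th:word}.
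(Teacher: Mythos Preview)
Your proposal is correct and follows the same approach as the paper's proof: both apply Stephen's $P$-expansion procedure to the linear graph and verify that each new $ls$/$rs$ expansion (after determinization) adds a single edge between existing vertices without creating new ones, so the vertex set never grows. The paper's termination argument simply asserts that the number of ``deficient'' edge configurations strictly decreases at each step, whereas your $\sigma$-projection argument---observing that at most two generators of $Y\cup Y^{-1}$ map to any given element of $\mathsf{FG}(X)$, so the total edge count among a fixed vertex set is globally bounded---is a cleaner and more rigorous way to secure termination.
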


\begin{proof}
We argue similarly as in the proof of Theorem \ref{th:word} and start from the determinized form of the linear graph of  $u\in (Y\cup Y^{-1})^*$ where $Y=X\cup \overline{X^+}$. 
If it is closed, we are done. Otherwise, we apply to it a series of elementary $P$-expansions and determinizations. These include the ones considered in Cases 1 and 2 of the proof of Theorem \ref{th:word} and possibly other elementary $P$-expansions which we now describe.

Let us consider the case of ${\mathcal{I}_{ls}}$. It remains only to treat the relations $\overline{u}\,\overline{v} = \overline{u}\,\overline{u}^{-1}\,\overline{uv}$, where $u,v\in X^+$. 
The only possibility when such a relation causes attaching a path is when there are edges  $(\alpha,\overline{u},\beta)$ and $(\alpha,\overline{uv},\gamma)$ in the graph, but there is no edge $(\beta,\overline{v},\gamma)$. Then after applying an elementary $P$-expansion (attaching a path from $\alpha$ to $\gamma$ labeled by $\overline{u}\,\overline{v}$) and several determinizations, we arrive at the graph obtained from the initial graph by adding the edge $(\beta, \overline{v}, \gamma)$, see Figure \ref{fig:cls}. If applicable, we determinize this graph and obtain a determinized graph with at most the same number of vertices as the initial graph and with fewer number of pairs of edges of the form  $(\alpha,\overline{u},\beta)$ and $(\alpha,\overline{uv},\gamma)$ such that the there is no edge $(\beta,\overline{v},\gamma)$ in the graph. It follows that in a finite number of steps we obtain a closed graph 
with at most the same number of vertices as the initial graph.  
This completes the proof in this case.

The case of ${\mathcal{I}}_{rs}$ is symmetric and that of ${\mathcal{I}}_{s}$ is a combination of the cases of ${\mathcal{I}}_{ls}$ and ${\mathcal{I}}_{rs}$.

\begin{figure}[!ht]
\centering
\begin{tikzpicture}[scale=0.6]
\begin{scope}[every node/.style={circle,fill,inner sep=1.5pt}]
\node[label={[label distance=0.1cm]:$\alpha$}] (A) at (0,0) {}; 
\node[label={[label distance=0.1cm]:{$\beta$}}] (B) at (2.5,0) {};
\node[label={[label distance=0.1cm]:{$\gamma$}}] (C) at (5,0) {};
\end{scope}
\begin{scope}[>={Stealth[black]}, every node/.style={fill=white,circle,scale=0.7}, every edge/.style={draw, thick}]
\path [->] (A) edge[dashed,bend left=25] node[above=0.5pt] {$\overline{u}$} (B);
\path [->] (A) edge[dashed, bend right=25] node[below=0.5pt] {$\overline{uv}$} (C);
\end{scope}

\node (P) at (6, 0) {};
\node (Q) at (8.5, 0) {};

\begin{scope}, 
\draw[-stealth,thick,decorate,decoration={snake,amplitude=.3mm},label=abc] (P) -- (Q) node[above,midway]{};
\end{scope}

\begin{scope}[every node/.style={circle,fill,inner sep=1.5pt},xshift=9.5cm]
\node[label={[label distance=0.1cm]:$\alpha$}] (A) at (0,0) {}; 
\node[label={[label distance=0.1cm]:{$\beta$}}] (B) at (2.5,0) {};
\node[label={[label distance=0.1cm]:{$\gamma$}}] (C) at (5,0) {};
\node[label={[label distance=0.1cm]:{}}] (D) at (2.5,1.5) {};
\end{scope}

\begin{scope}[>={Stealth[black]}, every node/.style={fill=white,circle,scale=0.7}, every edge/.style={draw, thick}]
\path [->] (A) edge[dashed,bend left=25] node[above=0.5pt] {$\overline{u}$} (B);
\path [->] (A) edge[dashed,bend left=25] node[above=0.5pt] {$\overline{u}$} (D);
\path [->] (D) edge[dashed, bend left=25] node[above=0.5pt] {$\overline{v}$} (C);
\path [->] (A) edge[dashed, bend right=25] node[below=0.5pt] {$\overline{uv}$} (C);
\end{scope}

\node (P) at (6, -4) {};
\node (Q) at (8.5, -4) {};
\begin{scope}, 
\draw[-stealth,thick,decorate,decoration={snake,amplitude=.3mm},label=abc] (P) -- (Q) node[above,midway]{};
\end{scope}
\begin{scope}[every node/.style={circle,fill,inner sep=1.5pt},xshift=9.5cm]
\node[label={[label distance=0.1cm]:$\alpha$}] (A) at (0,-4) {}; 
\node[label={[label distance=0.1cm]:{$\beta$}}] (B) at (2.5,-4) {};
\node[label={[label distance=0.1cm]:{$\gamma$}}] (C) at (5,-4) {};
\end{scope}

\begin{scope}[>={Stealth[black]}, every node/.style={fill=white,circle,scale=0.7}, every edge/.style={draw, thick}]
\path [->] (A) edge[dashed,bend left=25] node[above=0.5pt] {$\overline{u}$} (B);
\path [->] (B) edge[dashed, bend left=25] node[above=0.5pt] {$\overline{v}$} (C);
\path [->] (A) edge[dashed, bend right=25] node[below=0.5pt] {$\overline{uv}$} (C);
\end{scope}
\end{tikzpicture}
\caption{Illustration of a $P$-expansion: the case of ${\mathcal{I}_{ls}}$.}\label{fig:cls}
\end{figure}
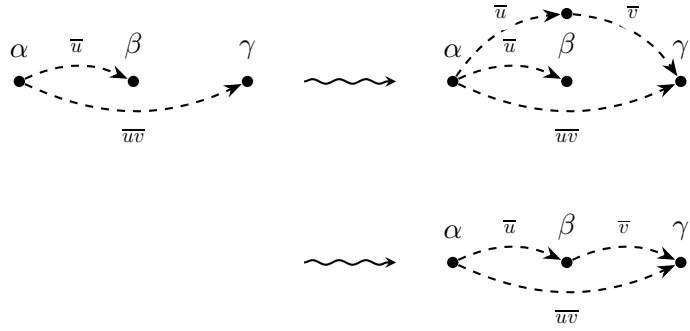
\end{proof}
We have proved following.

\begin{theorem}\label{thm:word_rel}
The word problem for ${\mathsf{FFBR}_{ls}}(X)$, ${\mathsf{FFBR}_{rs}}(X)$ and ${\mathsf{FFBR}_s}(X)$ is decidable.   \end{theorem}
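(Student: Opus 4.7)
The plan is to reduce the word problem for each $\mathsf{FFBR}_i(X)$, $i\in\{ls,rs,s\}$, to the word problem for the associated inverse monoid $\mathcal{I}_i$, which is already handled by the previous proposition.

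First, given a word $w$ over the alphabet $X$ in the signature $(\cdot,\,^*,\,^+,\,\mx{},1)$, I would effectively convert it into a pair $(e_w,u_w)\in E(\mathcal{I}_i)\rtimes X^*$ using the isomorphism of Theorem~\ref{th:main1aa}. Concretely, the conversion proceeds by structural induction on $w$: each generator $x\in X$ maps to $(x^+,x)$, while the operations $\cdot$, $^*$, $^+$ and $\mx{(\cdot)}$ are computed with the explicit formulas
\[
(e,u)(f,v)=(e(\overline{u}f)^+,uv),\quad (e,u)^*=((e\overline{u})^*,1),\quad (e,u)^+=(e,1),\quad \mx{(e,u)}=(\overline{u}^+,u),
\]
so the second coordinate $u_w\in X^*$ is the image of $w$ in $\mathsf{FFBR}_i(X)/\sigma\simeq X^*$ (trivially computable), and the first coordinate is given as a finite word over $X\cup\overline{X^+}\cup(X\cup\overline{X^+})^{-1}$ representing an idempotent of $\mathcal{I}_i$.

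Next, given two input words $w_1,w_2$, I would compute their decompositions $(e_1,u_1)$ and $(e_2,u_2)$. Then $w_1=w_2$ holds in $\mathsf{FFBR}_i(X)$ if and only if $u_1=u_2$ in $X^*$, which is trivially decidable, and $e_1=e_2$ in $E(\mathcal{I}_i)$. The latter is a special case of the word problem in $\mathcal{I}_i$, which is decidable by the preceding proposition since the Sch\"utzenberger graphs in $\mathcal{I}_i$ are finite and effectively constructed: one can construct the Sch\"utzenberger graph of $[e_1]_{\mathcal{I}_i}$ and check whether the word representing $e_2$ labels a loop at the origin (equivalently, compare with the Sch\"utzenberger graph of $[e_2]_{\mathcal{I}_i}$ using Stephen's criterion).

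There is no essential obstacle here because the hard work is done by the preceding proposition; the only routine point to verify is that the decomposition $w\mapsto(e_w,u_w)$ is effectively computable as a concrete expression whose first component is a word in the generators of $\mathcal{I}_i$. Combining the effective decomposition with the decidability of the word problem in $\mathcal{I}_i$ yields the claim for all three of ${\mathsf{FFBR}_{ls}}(X)$, ${\mathsf{FFBR}_{rs}}(X)$ and ${\mathsf{FFBR}_{s}}(X)$ simultaneously.
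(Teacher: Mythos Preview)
Your proposal is correct and takes essentially the same approach as the paper: reduce the word problem in $\mathsf{FFBR}_i(X)$ to that in $\mathcal{I}_i$ via the coordinatization $E(\mathcal{I}_i)\rtimes X^*$, then invoke the preceding proposition on Sch\"utzenberger graphs. The paper simply states this reduction at the opening of the section and records the theorem as an immediate consequence; you have spelled out the effective computation of the pair $(e_w,u_w)$ more explicitly, but the underlying argument is identical.
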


The word problem for ${\mathsf{FFBR}}_p(X)$ could be treated quite similarly, though additional attention is required since an elementary $P$-expansion based on the relation $\overline{uv} = \overline{u}\,\overline{v}$ may cause adding new vertices to the approximate graphs of the Sch\"utzenberger graphs. We opted to postpone treating the word problem in ${\mathsf{FFBR}}_p(X)$ till Subsection \ref{subsec:perf}, where we provide a transparent geometric model for ${\mathsf{FFBR}}_p(X)$ based on the Cayley graph of ${\mathsf{FG}}(X)$ with respect to generators $X\cup \overline{X}$ which yields the decidability of the word problem.

\section{Geometric models of $F$-birestriction monoids}\label{sec:geom_models}
In what follows by $({\mathsf{FG}}(X), X\cup \overline{X})$ we denote the group ${\mathsf{FG}}(X)$ considered as an $(X\cup {\overline{X}})$-generated group via the assignment map $x\mapsto x$ and $\overline{x}\mapsto x$ where $x\in X$. Furthermore, by $({\mathsf{FG}}(X), X\cup \overline{X^+})$ we denote the group ${\mathsf{FG}}(X)$ considered as an $(X\cup {\overline{X^+}})$-generated group via the assignment map $x\mapsto x$ and $\overline{u}\mapsto u$ where $x\in X$ and $\overline{u}\in \overline{X^+}$.

In this section we explore if 
the free $X$-generated $F$-birestriction monoids can be embedded into quotients of the Margolis-Meakin expansions  $M({\mathsf{FG}}(X), X\cup \overline{X^+})$ and $M({\mathsf{FG}}(X), X\cup \overline{X})$ constructed via dual-closure operators described in Subsection~\ref{subs:e_unitary}.

\subsection{The cases of ${\mathsf{FFBR}}(X)$, ${\mathsf{FFBR}_{ls}}(X)$, ${\mathsf{FFBR}_{rs}}(X)$
and ${\mathsf{FFBR}_s}(X)$} 
We first treat the case where $|X|\geq 2$ and then that where $|X|=1$.
\begin{theorem}\label{th:nogeom}
Let $|X|\geq 2$ and $i\in \{\varnothing,ls,rs,s\}$. Then
${\mathcal F}_i$ does not admit a geometric model.
\end{theorem}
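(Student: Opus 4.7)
Plan: By Proposition~\ref{prop:model}(2), since ${\mathcal F}_i$ is proper (any $F$-birestriction monoid is proper) and the induced morphism $\psi\colon {\mathcal F}_i/\sigma\to {\mathcal I}_i/\sigma$ is injective, the theorem is equivalent to the statement that ${\mathcal I}_i$ is not $E$-unitary. The injectivity on $\sigma$-quotients is routine: applying $\sigma^{\natural}\circ\psi$ to the defining relations in $N_i$ yields the identifications $\overline{x}=x$ and $\overline{u}\,\overline{v}=\overline{uv}$ in the group quotient, so ${\mathcal I}_i/\sigma$ collapses to ${\mathsf{FG}}(X)$ and the induced map $X^*\simeq{\mathcal F}_i/\sigma\to{\mathsf{FG}}(X)$ is the canonical embedding of the free monoid.

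My strategy for showing that ${\mathcal I}_i$ fails to be $E$-unitary is to exhibit a word $w$ in $(X\cup\overline{X^+}\cup X^{-1}\cup\overline{X^+}^{-1})^*$ with $[w]_{{\mathsf{FG}}(X)}=1$ but $[w]_{{\mathcal I}_i}\notin E({\mathcal I}_i)$. The construction exploits the hypothesis $|X|\geq 2$: for distinct letters $a,b\in X$ the generators $\overline{u}$ corresponding to distinct positive words $u\in X^+$ are genuinely independent and are not related by any defining relation in $N_i$. In particular, the elements $\overline{a}\,\overline{bc}$ and $\overline{ab}\,\overline{c}$ both lie strictly below $\overline{abc}$ yet are mutually incomparable in ${\mathcal F}_i$, and hence in ${\mathcal I}_i$ via Proposition~\ref{prop:isom_sem_rel}. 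Combining such incomparable elements with appropriately chosen letter inverses that cancel at the group level, one builds a candidate word $w$ whose $\sigma$-image is $1$ but which ought to avoid collapsing to an idempotent in ${\mathcal I}_i$.

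The main obstacle is the explicit verification that $[w]_{{\mathcal I}_i}^2\neq[w]_{{\mathcal I}_i}$. The closure-operator viewpoint built inside $\Cay({\mathsf{FG}}(X),X\cup\overline{X^+})$ tautologically yields an $E$-unitary quotient of $M({\mathsf{FG}}(X),X\cup\overline{X^+})$, so a purely graph-theoretic argument carried out in that Cayley graph cannot by itself distinguish $[w]$ from $[w]^2$. To overcome this, I plan to work directly from the presentation ${\mathcal I}_i=\mathrm{Inv}\langle X\cup\overline{X^+}\mid\psi(N_i)\rangle$ and construct an auxiliary $(X\cup\overline{X^+})$-generated inverse monoid that satisfies $\psi(N_i)$ yet separates $[w]$ from $[w]^2$, thereby producing a $(2,1,1,0)$-morphism out of ${\mathcal I}_i$ that witnesses non-$E$-unitarity. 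The asymmetry with the perfect case is instructive: the extra defining relation $\overline{uv}=\overline{u}\,\overline{v}$ in $N_p$ destroys precisely the incomparabilities exploited above, which explains why the analogous geometric model does exist for ${\mathsf{FFBR}}_p(X)$ by Theorem~\ref{th:geom_p}.
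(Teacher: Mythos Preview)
Your reduction is correct: by Proposition~\ref{prop:model} it suffices to show that ${\mathcal I}_i$ is not $E$-unitary, and the injectivity of $X^*\hookrightarrow {\mathsf{FG}}(X)$ is clear. The gap is that you never actually produce the witness word~$w$ nor the promised auxiliary inverse monoid; the proposal is a plan, not a proof. More importantly, you dismiss the graph-theoretic route too quickly. You are right that any quotient of $M({\mathsf{FG}}(X),X\cup\overline{X^+})$ built via a dual-closure operator is automatically $E$-unitary, so mapping into such a quotient cannot separate $[w]$ from $[w]^2$. But Stephen's Sch\"utzenberger-graph machinery from Section~\ref{sec:wordproblem} works directly from the presentation ${\mathcal I}_i=\mathrm{Inv}\langle X\cup\overline{X^+}\mid\psi(N_i)\rangle$ and makes no $E$-unitarity assumption; it is precisely the tool for deciding $[w]=[w]^2$ in a non-$E$-unitary setting.

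The paper exploits this with a single concrete word. For distinct $x,y\in X$ take
\[
a=\overline{xy}\,(\overline{y^2})^{-1}\,\overline{yx}\,(\overline{x^2})^{-1}.
\]
Then $[a]_{{\mathsf{FG}}(X)}=xy\,y^{-2}\,yx\,x^{-2}=1$. The linear graph of $a$ is already determinized (all four $\overline{X^+}$-labels are distinct, so no foldings occur) and admits no elementary $P$-expansion with respect to any relation in $\psi(N_i)$: the relations $\overline{x}\geq x$ require an $X$-edge, and the relations $\overline{u}\,\overline{v}=\overline{u}^+\overline{uv}$, $\overline{u}\,\overline{v}=\overline{uv}\,\overline{v}^*$, or $\overline{uv}\geq\overline{u}\,\overline{v}$ require two consecutive positively oriented $\overline{X^+}$-edges or an edge labelled by a concatenation of labels already present, neither of which occurs. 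Hence the linear graph is already the Sch\"utzenberger graph of $[a]_{{\mathcal I}_i}$. It visibly does not accept $a^2$ (the terminal vertex has no outgoing $\overline{xy}$-edge), so $[a]_{{\mathcal I}_i}$ is not idempotent and ${\mathcal I}_i$ is not $E$-unitary. This replaces your unspecified auxiliary-monoid construction with a two-line computation.
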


\begin{proof} 
We show that ${\mathcal I}_i$ is not $E$-unitary for each $i\in \{\varnothing,ls,rs,s\}$. For distinct elements $x,y\in X$ we put $a=\overline{xy}(\overline{y^2})^{-1}\overline{yx}(\overline{x^2})^{-1}$ and observe that $[a]_{{\mathsf{FG}}(X)} = xyy^{-2}yxx^{-2} = 1$. If ${\mathcal I}_i$ were $E$-unitary, \eqref{eq:i2} would yield that the value of $a$ in $M(G,X\cup \overline{X^+})$ and thus also in its quotient ${\mathcal I}_i$ would be an idempotent. However, drawing the linear graph of $a$ it is easy to see that it does not admit determinizations or elementary $P$-expansions and is thus the Sch\"utzenberger graph of $a$, see Figure \ref{fig:c1}. 
For example, in the case of ${\mathcal I}_{\varnothing}$ no elementary $P$-expansions in the linear graph of $a$ are possible based on its defining relations $\psi(N_{\varnothing})$.
Since the graph does not accept $a^2$, \cite[Corollary 3.2]{S90} implies that $a$ is not an idempotent in ${\mathcal I}_i$, so that the latter is not $E$-unitary. It follows from Proposition \ref{prop:model} that ${\mathcal F}_i$ does not admit a geometric model.

\begin{figure}[!ht]
\begin{tikzpicture}[scale=0.6]
\begin{scope}[every node/.style={circle,fill,inner sep=1.5pt}]
\node[label= {[label distance=0.3cm]:}] (A) at (0,0) {};
\node[label= {[label distance=0.1cm]:}] (B) at (2.5,0) {};
\node[label= {[label distance=0.03cm]:}] (C) at (5,0) {};
\node[label= {[label distance=0.03cm]:}] (D) at (7.5,0) {};
\node[label= {[label distance=0.03cm]:}] (E) at (10,0) {};
\end{scope}

\begin{scope}[>={Stealth[black]}, every node/.style={fill=white,circle,scale=0.7}, every edge/.style={draw, thick}]
\path [->] (0,-0.83) edge[] (A);
\path [->] (A) edge[dashed,bend left=25] node[above=0.5pt] {$\overline{xy}$} (B);
\path [<-] (B) edge[dashed, bend left=25] node[above=0.5pt] {$\overline{y^2}$} (C);
\path [->] (C) edge[dashed,bend left=25] node[above=0.5pt] {$\overline{yx}$} (D);
\path [<-] (D) edge[dashed, bend left=25] node[above=0.5pt] {$\overline{x^2}$} (E);
\path [->] (E) edge[] (10,-0.83);
\end{scope}
 
\end{tikzpicture}
\caption{The Sch\"utzenberger graph of $a$.}\label{fig:c1}
\end{figure}
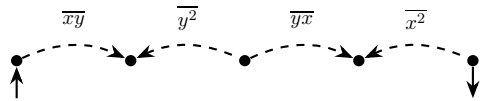
\end{proof}

\begin{proposition}\label{prop:nogeom1}
Let $X=\{x\}$. Then ${\mathcal F}_{\varnothing}$ does not admit a geometric model. 
\end{proposition}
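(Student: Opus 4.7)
The plan is to mimic the strategy of Theorem~\ref{th:nogeom}: by Proposition~\ref{prop:model}(2), since ${\mathcal F}={\mathcal F}_\varnothing$ is proper and the canonical map ${\mathcal F}/\sigma\simeq X^{*}\to{\mathcal I}/\sigma\simeq{\mathsf{FG}}(X)\simeq\mathbb{Z}$ is the obvious inclusion (hence injective), it is enough to exhibit an element of ${\mathcal I}$ that projects to the identity of $\mathbb{Z}$ but is not an idempotent. This will show that ${\mathcal I}={\mathcal I}_\varnothing$ fails to be $E$-unitary, whence ${\mathcal F}_\varnothing$ admits no geometric model.

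As a witness I propose the word $w=\overline{x}\,\overline{x^{2}}^{\,-1}\,\overline{x}$ over $Y\cup Y^{-1}$, where $Y=X\cup\overline{X^{+}}$. Its image in ${\mathsf{FG}}(X)=\mathbb{Z}$ is $1-2+1=0$. I will draw its linear graph, which has four distinct vertices $\alpha_{0},\alpha_{1},\alpha_{2},\alpha_{3}$ and three edges
$\alpha_{0}\xrightarrow{\overline{x}}\alpha_{1}$, $\alpha_{2}\xrightarrow{\overline{x^{2}}}\alpha_{1}$, $\alpha_{2}\xrightarrow{\overline{x}}\alpha_{3}$,
with $\alpha_{0}$ initial and $\alpha_{3}$ final. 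The linear graph is clearly deterministic (at each vertex the outgoing edges carry pairwise distinct labels in $Y\cup Y^{-1}$), so the only remaining task is to show that it is closed under elementary $P$-expansions coming from the defining relations $\psi(N)$, i.e.\ the same two cases analysed in Theorem~\ref{th:word}.

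The key verification is this: Case~1 requires an $x$-edge, and the graph has none; Case~2 requires two consecutive positive edges (both labelled by elements of $\overline{X^{+}}$), which, by direct inspection of the three positive edges, does not occur because both $\overline{x}$-edges point \emph{into} a vertex ($\alpha_{1}$ or $\alpha_{3}$) from which only inverse edges emanate. Hence by \cite[Theorem~5.10]{S90} the linear graph is $V$-isomorphic to the Sch\"utzenberger graph of $[w]_{\mathcal I}$. Since $\alpha_{0}\neq\alpha_{3}$, the standard characterisation of idempotents in Stephen's framework (initial vertex equals final vertex) shows that $[w]_{\mathcal I}$ is not an idempotent, while $[w]_{\mathbb Z}=0$ shows $[w]\mathrel{\sigma}1$. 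Thus ${\mathcal I}$ is not $E$-unitary, and Proposition~\ref{prop:model} delivers the conclusion.

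The only delicate point, and therefore the main pitfall to watch out for, is the closure check. For $|X|=1$ all non-trivial relations are of the form $\overline{x^{u+v}}\geq\overline{x^{u}}\,\overline{x^{v}}$, and it is tempting to believe that these relations will always allow one to fold the initial and final vertex of any word whose $\mathbb{Z}$-image is $0$ (as happens, e.g., for $\overline{x^{2}}\,\overline{x}^{-1}\overline{x}^{-1}$). The choice of $w$ is therefore engineered so that the only positive label concatenation one might hope to invoke, $\overline{x}\cdot\overline{x}=\overline{x^{2}}$, never occurs as a two-edge path because the two $\overline{x}$-edges sit on opposite sides of the $\overline{x^{2}}$-edge rather than end-to-end.
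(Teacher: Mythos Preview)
Your proof is correct and follows essentially the same route as the paper's own argument: the same witness $w=\overline{x}\,\overline{x^{2}}^{\,-1}\,\overline{x}$ is used, its linear graph is seen to be already deterministic and closed under the elementary $P$-expansions coming from $\psi(N)$, and the resulting Sch\"utzenberger graph witnesses that $[w]_{\mathcal I}$ is not idempotent while $[w]_{{\mathsf{FG}}(X)}=1$, so ${\mathcal I}$ is not $E$-unitary and Proposition~\ref{prop:model} applies. The only cosmetic difference is that the paper concludes non-idempotence from the fact that the Sch\"utzenberger graph does not accept $w^{2}$, whereas you invoke the equivalent criterion that the initial and terminal vertices are distinct; also, your preliminary remark that $\psi\colon{\mathcal F}/\sigma\to{\mathcal I}/\sigma$ is injective is not actually needed, since by Proposition~\ref{prop:model}(1) the failure of $E$-unitarity alone already rules out a geometric model.
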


\begin{proof} Similarly as in the proof of Proposition \ref{th:nogeom}, we see that the element $\overline{x}\overline{x^2}^{-1}\overline{x}$  has value $1$ in ${\mathsf{FG}}(X)$, but is not an idempotent in ${\mathcal I}_{\varnothing}$ since the Sch\"utzenberger graph of $\overline{x}\overline{x^2}^{-1}\overline{x}$ presented on Figure \ref{fig:o1} does not accept $(\overline{x}\overline{x^2}^{-1}\overline{x})^2$.  The statement follows applying Proposition \ref{prop:model}.
\begin{figure}[!ht]
\begin{tikzpicture}[scale=1]
\begin{scope}[every node/.style={circle,fill,inner sep=1.5pt}]
\node[label= {[label distance=0.1cm]:}] (A) at (0,0) {};
\node[label= {[label distance=0.1cm]:{}}] (B) at (1,0) {};
\node[label= {[label distance=0.1cm]:{}}] (C) at (3,0) {};
\node[label= {[label distance=0.1cm]:{}}] (D) at (4,0) {};
\end{scope}
\begin{scope}[>={Stealth[black]}, every node/.style={fill=white,circle,scale=0.7}, every edge/.style={draw, thick}]
\path [->] (0,-0.5) edge[] (A);
\path [->] (A) edge[dashed,bend left=25] node[above=0.5pt] {$\overline{x}$} (B);
\path [<-] (B) edge[dashed, bend left=25] node[above=0.5pt] {$\overline{x^2}$} (C);
\path [->] (C) edge[dashed, bend left=25] node[above=0.5pt] {$\overline{x}$} (D);
\path [->] (D) edge[] (4,-0.5);
\end{scope}
\end{tikzpicture}
\caption{The Sch\"utzenberger graph of $\overline{x}\overline{x^2}^{-1}\overline{x}$.}\label{fig:o1} 
\end{figure}
\end{proof}

In contrast to the previous statement, we have the following.

\begin{theorem}\label{th:geom1}
Let $X=\{x\}$ and $i\in \{ls,rs,s\}$. Then ${\mathcal F}_i$ admits a geometric model. 
\end{theorem}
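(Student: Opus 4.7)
The approach is to reduce the theorem to showing that the universal inverse monoid $\mathcal{I}_i$ from \eqref{eq:a22c} is $E$-unitary. Indeed, by Lemma \ref{lem:generation2a}(2), the maximum group image $\mathcal{I}_i/\sigma$ is presented by generators $X\cup \overline{X^+}$ and relations $\sigma^{\natural}\psi(N_i)$; these collapse each $\overline{x}$ to $x$ and force $\overline{x^m}\overline{x^n}=\overline{x^{m+n}}$, hence $\overline{x^n}=x^n$ and $\mathcal{I}_i/\sigma \simeq \mathsf{FG}(X)\simeq \mathbb{Z}$. As $\mathcal{F}_i/\sigma \simeq \mathbb{N}$ (Lemma \ref{lem:generation12a}) injects canonically into $\mathbb{Z}$, and $\mathcal{F}_i$ is proper, Proposition \ref{prop:model}(2) identifies the existence of a geometric model with the $E$-unitary property of $\mathcal{I}_i$.

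I plan to establish this $E$-unitary property by constructing $\mathcal{I}_i$ explicitly in the framework of Subsection \ref{subs:e_unitary}, realizing it as $M_{\rho_i}(\mathbb{Z}, X\cup \overline{X^+})$ for a suitable congruence $\rho_i$ arising from a dual-closure operator $j_i$ on $\mathcal{X}^c_{X\cup\overline{X^+}}$. Identifying $\mathsf{FG}(X)=\mathbb{Z}$, the Cayley graph $\Cay(\mathbb{Z}, X\cup \overline{X^+})$ has edges $x\colon k\to k+1$ and $\overline{x^n}\colon k\to k+n$. Declare $\Gamma$ to be \emph{$j_i$-closed} when it satisfies (P1) every $x$-edge in $\Gamma$ is accompanied by the parallel $\overline{x}$-edge; (P2a) composable pairs $(\alpha, \overline{x^m}, \alpha+m), (\alpha+m, \overline{x^n}, \alpha+m+n)$ in $\Gamma$ yield $(\alpha, \overline{x^{m+n}}, \alpha+m+n)\in\Gamma$; additionally, for $i\in\{ls,s\}$, (P2b) any pair $(\alpha, \overline{x^m}, \alpha+m), (\alpha, \overline{x^{m+n}}, \alpha+m+n)$ in $\Gamma$ forces $(\alpha+m, \overline{x^n}, \alpha+m+n)\in\Gamma$; and for $i\in\{rs,s\}$, (P3) any pair $(\beta-m, \overline{x^{m+n}}, \beta+n), (\beta, \overline{x^n}, \beta+n)$ in $\Gamma$ forces $(\beta-m, \overline{x^m}, \beta)\in\Gamma$. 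The map $j_i$ sending $\Gamma$ to its smallest $j_i$-closed extension is a dual-closure operator, and setting $T_i := j_i(\mathcal{X}_{X\cup \overline{X^+}})\rtimes \mathbb{Z}$, relation \eqref{eq:closure_on_M} makes $T_i$ an $E$-unitary inverse monoid with $T_i/\sigma\simeq\mathbb{Z}$. Since the closure conditions (P1)--(P3) were designed to encode exactly the relations $\psi(N_i)$ when translated through the partial action product structure of $T_i$, the universal property of $\mathcal{I}_i$ delivers a canonical surjective morphism $\Phi_i\colon \mathcal{I}_i \twoheadrightarrow T_i$.

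The final and most delicate step is to show that $\Phi_i$ is injective. For this, I plan to invoke Stephen's Sch\"utzenberger graph construction as performed in the proof of Theorem \ref{thm:word_rel}: every approximate graph of that construction carries a natural labeling of vertices by integers, rooted at the origin with label $0$ and propagated along edges by $[\cdot]_{\mathbb{Z}}$; this labeling is preserved by determinizations (which merge only vertices sharing a $\mathbb{Z}$-label) and by elementary $P$-expansions driven by $\psi(N_i)$ (since both sides of every such relation have equal $\mathbb{Z}$-image). Consequently the final Sch\"utzenberger graph of any element of $\mathcal{I}_i$ admits a graph morphism onto a $j_i$-closed subgraph of $\Cay(\mathbb{Z}, X\cup \overline{X^+})$. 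A finite case analysis, tractable by virtue of $|X|=1$ and the rich connectivity of the Cayley graph, will verify that this morphism is actually an isomorphism: any two distinct vertices carrying equal $\mathbb{Z}$-labels that appear during the construction are eventually identified via some $P$-expansion followed by a subsequent determinization. Injectivity of $\Phi_i$ then yields $\mathcal{I}_i\simeq T_i$, which is $E$-unitary.

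The main obstacle is the injectivity verification in the asymmetric cases $i\in\{ls,rs\}$, where only one of (P2b) and (P3) is available: one must show that repeated application of the available $P$-expansions, combined with subsequent determinizations, always collapses pairs of distinct vertices sharing equal $\mathbb{Z}$-labels. The hypothesis $|X|=1$ is essential, as Theorem \ref{th:nogeom} precisely rules out the embedding when $|X|\geq 2$.
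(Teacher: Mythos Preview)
Your reduction to the $E$-unitary property of $\mathcal{I}_i$ via Proposition~\ref{prop:model}(2) is exactly the paper's opening move, and your computation of $\mathcal{I}_i/\sigma\simeq\mathbb{Z}$ is correct. From that point on the two arguments diverge.

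The paper does \emph{not} build an explicit model $T_i$ or analyse Sch\"utzenberger graphs. Instead it shows directly that any $u\in(Y\cup Y^{-1})^*$ with $[u]_{\mathsf{FG}(X)}=1$ has idempotent value in $\mathcal{I}_i$, by an inductive algebraic rewriting: first replace every occurrence of $x^{\pm1}$ by $\overline{x}^{\pm1}$ times an idempotent, reducing to words $y_1\cdots y_n e$ with $y_j\in\{\overline{x^k}^{\,\pm1}\}$; then induct on $n$ through four cases (consecutive letters of equal sign combine via $\overline{u}\,\overline{v}\leq\overline{uv}$; equal letters cancel; in the remaining alternating case one uses the strong relation $\overline{x^k}^{-1}\overline{x^l}=\overline{x^k}^*\overline{x^{l-k}}$ or its dual to shorten). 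This is short, self-contained, and never touches the Cayley graph.

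Your route through an explicit realization $T_i=j_i(\mathcal{X}_{X\cup\overline{X^+}})\rtimes\mathbb{Z}$ is a legitimate alternative, and the closure conditions (P1)--(P3) correctly encode $\psi(N_i)$, so the surjection $\Phi_i$ exists. The gap is that the injectivity of $\Phi_i$ is the entire content of the theorem, and you have only named it as ``a finite case analysis'' without carrying it out. Concretely, showing that two vertices with the same $\mathbb{Z}$-label in an approximate Sch\"utzenberger graph are eventually merged requires producing, at some stage, either two coterminal edges with the same label or two coinitial edges with the same label---and manufacturing such edges from (P2a), (P2b), (P3) is precisely the combinatorial work that the paper's Cases~1--4 perform algebraically. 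Your plan gives no mechanism for this; in particular you do not say how, in the $ls$ case, a subword $\overline{x^k}^{-1}\overline{x^l}$ with $k<l$ is handled (the paper rewrites it as $\overline{x^k}^*\overline{x^{l-k}}$, which in your picture corresponds to a specific sequence of (P2b)-expansions followed by a fold). Until that analysis is written down, the proposal is a correct framework with the main lemma still missing.
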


\begin{proof} 
Observe that the morphism  $\psi\colon {\mathcal F}_i/\sigma \to {\mathcal I}_i/\sigma$ is injective since it is the embedding $X^*\to {\mathsf{FG}}(X)$. Since in addition ${\mathcal F}_i$ is proper, Proposition \ref{prop:model}(2) implies that  it suffices to show that the inverse monoid  ${\mathcal I}_i$ is $E$-unitary.
Putting $Y = X\cup \overline{X^+} = \{x\} \cup \{\overline{x^k}\colon k\in {\mathbb N}\}$, let $u\in (Y\cup Y^{-1})^*$ be such that $[u]_{{\mathsf{FG}}(X)}=1$. We show that $[u]_{\mathcal{I}_i}$ is an idempotent. 
From now till the end of this proof we sometimes use the same notation for words in $(Y\cup Y^{-1})^*$ and their values in ${\mathcal I}_i$. It will be always clear from the context which objects we work with. For example, we will denote $[u]_{ {\mathcal I}_i}$ simply by $u$.

Proposition \ref{prop:generation} implies that there exist $v,e\in (Y\cup Y^{-1})^*$ such that $u=ve$ and $[e]_{{\mathcal I}_i}$ is an idempotent. We show that $v$ can be chosen so that it does not contain the letters $x$ and $x^{-1}$. Suppose $v$ contains $x$ and write $v=w_1xw_2$. Since in ${\mathcal I}_i$ we have  $\overline{x}\geq x$, it follows that $x=\overline{x}x^*$ so that $u=w_1\overline{x}x^*w_2e = w_1\overline{x}w_2fe$ where $f=(x^*w_2)^* \in E({\mathcal I}_i)$. If $v = w_1x^{-1}w_2$ then $v^{-1} = w_2^{-1}xw_1^{-1}$ and we can similarly rewrite $u^{-1} = e(w_2^{-1}x^+)^+w_2^{-1}\overline{x}w_1^{-1}$ so that $u=w_1\overline{x}^{-1}w_2fe$ with $f=(w_2^{-1}x^+)^+$. Arguing by induction, the claim follows.

Therefore, we have $u = y_1\cdots y_ne$ where $n\geq 0$, all $y_i$ are from $\{\overline{x^k}\colon k\in {\mathbb N}\}$ or $\{\overline{x^k}^{\,-1}\colon k\in {\mathbb N}\}$ and $[e]_{{\mathcal I}_i}$ is an idempotent. Since $[u]_{{\mathsf{FG}}(X)} = 1$ and $[e]_{{\mathsf{FG}}(X)} = 1$, we have that $[y_1\cdots y_n]_{{\mathsf{FG}}(X)}=1$ as well. It suffices to show that $y_1\cdots y_n$ is an idempotent in ${\mathcal I}_i$ (then also $u$ is an idempotent as a product of idempotents). We apply induction on $n$. If $n=0$, $y_1\cdots y_n=1$, and we are done. Let $n\geq 1$. We show that $u$ can be rewritten as a product $u=z_1\cdots z_m g$ where $0\leq m\leq n-1$ and $[g]_{{\mathcal I}_i}$ is an idempotent.

{\em Case 1.} Suppose that $y_1\cdots y_n$ has  a subword $y_iy_{i+1} = \overline{x^k}\,\overline{x^m}$.
Since $\overline{x^k}\,\overline{x^m} = \overline{x^{k+m}}f$ where $f=(\overline{x^k}\overline{x^m})^*$, applying  \eqref{eq:ample} we can write $u=y_1\cdots y_{i-1}\overline{x^{k+m}}y_{i+2}\cdots y_n g$ where $[g]_{{\mathcal I}_i}$ is an idempotent.  

{\em Case 2.} Suppose that $y_1\cdots y_n$ has  a subword $y_iy_{i+1} = \overline{x^k}^{-1}\,\overline{x^m}^{-1}$. Then $u^{-1}$ contains a subword $\overline{x^m}\,\overline{x^k}$, so we can write $u^{-1} = z_1\cdots z_{n-1}g$ where $[g]_{{\mathcal I}_i}$ is an idempotent. Then $u=g^{-1}z_{n-1}^{-1}\cdots z_1^{-1} = z_{n-1}^{-1}\cdots z_1^{-1}h$ where $h=(g^{-1}z_{n-1}^{-1}\cdots z_1^{-1})^*$. The claim follows.

{\em Case 3.} Suppose  that $y_1\cdots y_n$ has a subword $y_iy_{i+1}$ where $y_{i+1} = y_i^{-1}$. Then $y_iy_{i+1}$ is an idempotent, and using \eqref{eq:ample} we can write $u=y_1\cdots y_{i-1}y_{i+2}\cdots y_n g$ where $[g]_{{\mathcal I}_i}$ is an idempotent, as needed.

{\em Case 4.} Suppose  that  the letters from $\{\overline{x^k}\colon k\in {\mathbb N}\}$ and $\{\overline{x^k}^{\,-1}\colon k\in {\mathbb N}\}$ in $y_1\cdots y_n$ alternate and $y_{i+1}\neq y_i^{-1}$ for all $i$. From now on we work with the case of ${\mathcal F}_{ls}$ so that ${\mathcal I}_{ls}$ is left strong and satisfies \begin{equation}\label{eq:strong1}
\overline{x^k}\,\overline{x^{l-k}} = \overline{x^k}^+ \overline{x^{l}},
\end{equation}
for all $l>k$.
We show that we necessarily have $n\geq 3$. Indeed, if $n=1$ then $y_1\cdots y_n = y_1 = \overline{x^k}$ for $k\in {\mathbb N}$ which can not have value $1$ in ${\mathsf{FG}}(X)$ so this case is impossible. If $n=2$, the only possibility for $y_1y_2$ to have value $1$ in ${\mathsf{FG}}(X)$ is that $y_2 = y_1^{-1}$, which is excluded. It follows that $n\geq 3$ and one of the products $y_1y_2$ or $y_2y_3$ is of the form $\overline{x^k}^{-1}\overline{x^l}$ where $k\neq l$. We consider two subcases.

{\em Subcase 1.} Suppose that $k<l$. Then $$\overline{x^k}^{-1}\overline{x^l} = \overline{x^k}^{-1}\overline{x^k}\,\overline{x^k}^{-1}\overline{x^l} = \overline{x^k}^{-1}\overline{x^k}^+\overline{x^l} = \overline{x^k}^{-1}\overline{x^k}\,\overline{x^{l-k}} = \overline{x^k}^* \overline{x^{l-k}},
$$
where for the third equality we used \eqref{eq:strong1}.
Applying \eqref{eq:ample}, we can move $ \overline{x^k}^*$ to the right-hand side of the product. The claim follows.

{\em Subcase 2.} Suppose that $k > l$. 
Then $u^{-1}$ contains the subword $\overline{x^l}^{\,-1}\overline{x^k}$ and by the previous subcase we can write $u^{-1} = z_1\cdots z_m g$ where $0\leq m\leq n-1$ and $g$ is an idempotent. But then $u = g z_m^{-1}\cdots z_1^{-1}$ and applying \eqref{eq:ample} this equals $z_m^{-1}\cdots z_1^{-1}(g z_m^{-1}\cdots z_1^{-1})^*$, as needed.

The case of ${\mathcal F}_{rs}$ follows by symmetry, and that of ${\mathcal F}_{s}$ by combining the cases of ${\mathcal F}_{ls}$ and ${\mathcal F}_{rs}$.
\end{proof}

\begin{remark} One can show that the inverse monoids ${\mathcal I}_{ls}$, ${\mathcal I}_{rs}$ and ${\mathcal I}_{s}$ are in fact $F$-inverse. The maximum element of the $\sigma$-class of projections is $1$, and the maximum element of the $\sigma$-class which projects down to $g=x^k$, where $k\in {\mathbb Z}\setminus \{0\}$, is $\overline{x^k}$, if $k>0$, and $\overline{x^{-k}}^{\,-1}$, if $k<0$. Since we do not use this fact in the sequel, we omit the proof.
\end{remark}

\begin{remark}\label{rem:isom1a}
Let $X=\{x\}$ and $i\in \{ls,rs,s\}$. Let $\varphi$ and $\tilde{\varphi}$ be the underlying premorphisms of $\psi({\mathcal F}_{i})$ and ${\mathcal I}_{i}$, respectively. By \eqref{eq:domains} for every $t\in \psi({\mathcal F}_{i})$ we have ${\mathrm{dom}}(\varphi_t)\subseteq {\mathrm{dom}}(\tilde{\varphi}_t)$. Let us show that the reverse inclusion also holds. Suppose that $e\in {\mathrm{dom}}{\tilde\varphi}_t$ where $t\in X^*$. If $t=1$, $e\in E({\mathcal I}_i) = P(\psi({\mathcal F}_i))$ so that $e\in {\mathrm{dom}}\varphi_1$. If $t\in X^+$ it is easy to see that the $\sigma$-class of ${\mathcal I}_i$ which projects onto $t$ has a maximum element which is the element $\overline{t}$. It follows that $e\in {\mathrm{dom}}(\tilde\varphi_t)$ if and only if $e\leq \overline{t}^*$ in which case also $e\in {\mathrm{dom}}(\varphi_t)$. It follows from Lemma \ref{lem:varphi_tilde} that $\psi({\mathcal F}_i)$ is isomorphic to $E({\mathcal I}_i) \rtimes_{\tilde\varphi} X^*$.
\end{remark}

Let $i\in \{ls, rs, s,\varnothing\}$ and put ${\mathsf{FFBR}}_{\varnothing}(X) = {\mathsf{FFBR}}(X)$.
The results of this subsection imply that  ${\mathsf{FFBR}}_i(X)$ can be $(X\cup \overline{X^+})$-canonically $(2,1,1,0)$-embedded into a quotient of $M({\mathsf{FG}}(X), X\cup \overline{X^+})$ constructed via dual-closure operators described in Subsection \ref{subs:e_unitary} if and only if $|X|=1$ and $i\in \{ls,rs,s\}$.

\subsection{${\mathsf{FFBR}}_p(X)$ admits a geometric model and has decidable word problem}\label{subsec:perf}

We first observe that, due to the relation $\overline{uv} = \overline{u} \,\overline{v}$, $u,v\in X^+$, ${\mathcal F}_p$ is in fact $(X\cup \overline{X})$-generated. Put $$F_p={\mathrm{BRestr}}\langle X\cup \overline{X} \mathrel{\vert} \overline{x} \geq x, x\in X \rangle.$$
For $u=x_1\cdots x_n\in X^+$, where $x_i\in X$ for all $i$, put $\overline{u} = \overline{x_1}\,\cdots \,\overline{x_n}\in F_p$. Then $F_p$ is $(X\cup \overline{X^+})$-generated and $\overline{uv} = \overline{u}\,\overline{v}$ holds in $F_p$ for all $u,v\in X^+$.
We obtain the following.

\begin{lemma} \label{lem:a23b} ${\mathcal F}_p$ and ${\mathsf{FFBR}}_p(X)$ are $(2,1,1,0)$-generated by $(X\cup \overline{X})$. They are $(X\cup \overline{X})$-canonically and $(X\cup \overline{X^+})$-canonically $(2,1,1,0)$-isomorphic to $F_p$.
\end{lemma}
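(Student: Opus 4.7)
The plan is to exhibit mutually inverse canonical morphisms between $F_p$ and ${\mathcal F}_p$, and then to combine this with Theorem~\ref{th:structure_rel_free} to obtain the full statement. Both directions come directly from the respective universal properties, so no heavy lifting is needed: the argument is essentially that $F_p$ and ${\mathcal F}_p$ are two presentations of the same birestriction monoid, one of which uses redundantly many generators.

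First, with the notation $\overline{u} := \overline{x_1}\cdots\overline{x_n}$ for $u = x_1\cdots x_n \in X^+$, the monoid $F_p$ becomes $(X\cup\overline{X^+})$-generated, the relations $\overline{x}\geq x$ hold for $x\in X$ by definition, and the relations $\overline{uv} = \overline{u}\,\overline{v}$ for $u,v\in X^+$ hold by the very definition of $\overline{\cdot}$. Hence $F_p$ satisfies the defining relations $N_p$ of ${\mathcal F}_p$, and the universal property of ${\mathcal F}_p$ yields a canonical $(X\cup\overline{X^+})$-morphism $\alpha\colon{\mathcal F}_p\to F_p$. Conversely, viewing ${\mathcal F}_p$ as an $(X\cup\overline{X})$-generated birestriction monoid (by restricting to this subset of its generators), the relations $\overline{x}\geq x$ hold for all $x\in X$, so the universal property of $F_p$ gives a canonical $(X\cup\overline{X})$-morphism $\beta\colon F_p\to{\mathcal F}_p$.

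Next I would verify that $\alpha$ and $\beta$ are mutually inverse. The composition $\alpha\beta$ is identical on $X\cup\overline{X}$ by construction, and since $F_p$ is $(X\cup\overline{X})$-generated, $\alpha\beta$ is the identity on $F_p$. For $\beta\alpha$, on $X\cup\overline{X}$ it is again identical; and for $u = x_1\cdots x_n\in X^+$ we compute $\beta\alpha(\overline{u}) = \beta(\overline{x_1}\cdots\overline{x_n}) = \overline{x_1}\cdots\overline{x_n}$, and the defining relation $\overline{uv} = \overline{u}\,\overline{v}$ in ${\mathcal F}_p$ (applied repeatedly) identifies this with $\overline{u}$. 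Thus $\beta\alpha$ is identical on $X\cup\overline{X^+}$, hence on all of ${\mathcal F}_p$.

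This gives the canonical $(2,1,1,0)$-isomorphism $F_p\simeq{\mathcal F}_p$ with respect to both generating sets $X\cup\overline{X}$ and $X\cup\overline{X^+}$. Combining with the $X$-canonical isomorphism ${\mathcal F}_p\simeq{\mathsf{FFBR}}_p(X)$ from Theorem~\ref{th:structure_rel_free}, all three monoids are canonically identified. The assertion that ${\mathcal F}_p$ and ${\mathsf{FFBR}}_p(X)$ are $(X\cup\overline{X})$-generated is then immediate, since via this isomorphism they coincide with $F_p$, which is $(X\cup\overline{X})$-generated by its definition. There is no real obstacle here; the only point requiring a moment of attention is that in $F_p$ the notation $\overline{u}$ is defined unambiguously (as $u\in X^+$ is a word, not an abstract element, the product $\overline{x_1}\cdots\overline{x_n}$ is well defined), so that the relations $N_p$ hold verbatim in $F_p$ and the universal property of ${\mathcal F}_p$ applies.
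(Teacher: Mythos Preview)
Your proof is correct and follows essentially the same approach as the paper. The paper leaves the lemma without an explicit proof, simply writing ``We obtain the following'' after the observations preceding it (that ${\mathcal F}_p$ is $(X\cup\overline{X})$-generated via the relation $\overline{uv}=\overline{u}\,\overline{v}$, and that $F_p$ is $(X\cup\overline{X^+})$-generated with $\overline{uv}=\overline{u}\,\overline{v}$ holding by definition); your argument via mutually inverse canonical morphisms is precisely the standard way to make these observations rigorous.
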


It follows that ${\mathcal I}_p$ is $(X\cup \overline{X})$-generated and is $(X\cup \overline{X^+})$-canonically isomorphic to
\begin{equation}\label{eq:a22k}
I_p = {\mathrm{Inv}}\langle X\cup \overline{X} \mathrel{\vert} \overline{x} \geq x, x\in X \rangle.
\end{equation}

\begin{proposition}\label{prop:e_unitary}
The inverse monoid $I_p$ is $E$-unitary. 
\end{proposition}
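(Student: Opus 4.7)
The plan is to mirror the approach of Theorem~\ref{th:geom1}. First I will identify the maximum group quotient of $I_p$: Lemma~\ref{lem:generation2a}(2) shows $I_p/\sigma$ is the group on generators $X\cup\overline{X}$ subject to the image of $\overline{x}\geq x$, which unfolds as $x=\overline{x}x^*$; since $x^*\mapsto 1$ in any group, this collapses to $\overline{x}=x$, giving $I_p/\sigma\simeq{\mathsf{FG}}(X)$ via $x,\overline{x}\mapsto x$. Hence it suffices to prove that every $u\in(Y\cup Y^{-1})^*$ (with $Y=X\cup\overline{X}$) whose value in ${\mathsf{FG}}(X)$ is $1$ represents a projection in $I_p$.

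Given such a $u$, I would first apply Proposition~\ref{prop:generation} to write $u=ve$ in $I_p$ with $v\in(Y\cup Y^{-1})^*$ and $e\in P(I_p)$. The next step is to eliminate every occurrence of a letter from $X\cup X^{-1}$ in $v$ using $x=\overline{x}x^*$ and $x^{-1}=x^*\overline{x}^{-1}$: each substitution introduces a projection, which can be slid to the right end of the word via \eqref{eq:ample}. Iterating, $u$ is rewritten in $I_p$ as
\[
u=y_1\cdots y_n\,g
\]
with $g\in P(I_p)$ and each $y_i\in\overline{X}\cup\overline{X}^{-1}$.

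Since $[u]_{{\mathsf{FG}}(X)}=[g]_{{\mathsf{FG}}(X)}=1$, the word $y_1\cdots y_n$ also has trivial image in ${\mathsf{FG}}(X)$; via the identification ${\mathsf{FG}}(\overline{X})\simeq{\mathsf{FG}}(X)$, $\overline{x}\mapsto x$, this means $y_1\cdots y_n$ freely reduces to the empty word. I will then induct on $n$. If $n=0$, then $u=g$ is already a projection. If $n\geq 1$, free reducibility forces some adjacent pair $y_iy_{i+1}$ to satisfy $y_{i+1}=y_i^{-1}$, so $y_iy_{i+1}$ is a projection in $I_p$. One more application of \eqref{eq:ample} transports this projection past the remaining tail, yielding $u=y_1\cdots y_{i-1}y_{i+2}\cdots y_n\,h$ with $h\in P(I_p)$, and the inductive hypothesis (applied to a shorter word still mapping to $1$ in ${\mathsf{FG}}(X)$) closes the argument.

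The step demanding the most care is the elimination phase: after each substitution, the newly introduced projection must be transported to the right end of the product using \eqref{eq:ample} without disturbing the earlier rewriting, a piece of routine but careful bookkeeping. What makes the perfect case tractable, in contrast to Proposition~\ref{prop:nogeom1}, is that the generators $\overline{X}$ are atomic, so a reduced word in $\overline{X}\cup\overline{X}^{-1}$ that equals $1$ in the free group must be empty; no analogue of the obstruction element $\overline{x}\,\overline{x^2}^{-1}\overline{x}$ from Proposition~\ref{prop:nogeom1} can arise here.
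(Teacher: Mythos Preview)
Your proof is correct and takes a genuinely different route from the paper's.

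The paper's argument does \emph{not} first eliminate the letters from $X\cup X^{-1}$. Instead it proves a separate combinatorial lemma (Lemma~\ref{lem:a23a}): any word $w\in(X\cup X^{-1})^*$ with $[w]_{{\mathsf{FG}}(X)}=1$ factors as $x_1v_1x_1^{-1}\cdots x_nv_nx_n^{-1}$ with each $[v_i]_{{\mathsf{FG}}(X)}=1$. Applying the letter-by-letter projection $f\colon Y\cup Y^{-1}\to X\cup X^{-1}$, $x,\overline{x}\mapsto x$, this factorisation lifts to $u=a_1V_1b_1^{-1}\cdots a_nV_nb_n^{-1}$ with $f(a_i)=f(b_i)$ and $[V_i]_{G_p}=1$; the paper then inducts on $|u|$ and finishes with a four-case analysis (according to whether each of $a_i,b_i$ is barred or unbarred) showing each $[a_iV_ib_i^{-1}]_{I_p}$ is an idempotent.

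Your approach, modelled on the proof of Theorem~\ref{th:geom1}, trades this combinatorial lemma and the case analysis for an elimination phase: convert every $x,x^{-1}$ to $\overline{x},\overline{x}^{-1}$ at the cost of a projection swept to the right. What this buys is that, once only barred letters remain, the map $\overline{X}\to X$ is a bijection, so the surviving word must freely reduce to empty and the induction collapses to a one-line cancellation argument --- no auxiliary lemma, no case split. The paper's approach is more self-contained in that it handles the mixed word $u$ directly, at the expense of the extra lemma and the four cases. One minor remark: your invocation of Proposition~\ref{prop:generation} is unnecessary, since $u$ is already a word over $Y\cup Y^{-1}$ and you may simply take $v=u$, $e=1$; this is harmless and mirrors the phrasing in Theorem~\ref{th:geom1}.
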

For the proof we will need the following lemma.

\begin{lemma} \label{lem:a23a} Let $u\in (X\cup X^{-1})^*$ be such that $[u]_{{\mathsf{FG}}(X)} =1$. Then $u$ is either empty or can be written as a product
$$
u=x_1 v_1 x_1^{-1} x_2 v_2 x_2^{-1} \cdots x_n v_n x_n^{-1},
$$
where $n\geq 1$ and for all $i\in \{1,\dots, n\}$ we have that $x_i\in X\cup X^{-1}$ and $v_i\in (X\cup X^{-1})^*$ with $[v_i]_{{\mathsf{FG}}(X)}=1$.
\end{lemma}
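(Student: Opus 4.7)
The plan is to prove the lemma by strong induction on the length $m$ of $u$, with the base case $m=0$ being trivial (the empty word case). For the inductive step, I would write $u = a_1 a_2 \cdots a_m$ with $a_i \in X \cup X^{-1}$ and locate the first place at which a prefix of $u$ already reduces to $1$. Concretely, set $p_i = [a_1 \cdots a_i]_{{\mathsf{FG}}(X)}$ and let $k$ be the smallest index in $\{1, 2, \ldots, m\}$ with $p_k = 1$. Such $k$ exists because $p_m = 1$, and $k \geq 2$ since $p_1 = a_1 \neq 1$.

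The key observation is to track the reduced-word length $\ell_i := |p_i|$ in ${\mathsf{FG}}(X)$. We have $\ell_0 = \ell_k = 0$, $\ell_1 = 1$, and $|\ell_{i+1} - \ell_i| = 1$ because right-multiplication by a single letter either appends it or cancels the last letter of the reduced form. By minimality of $k$, $\ell_i \geq 1$ for $1 \leq i \leq k-1$, and since $\ell_k = 0$ we must have $\ell_{k-1} = 1$. Moreover, since the leftmost letter of $p_i$ is not affected by right-multiplication as long as $p_i$ remains nonempty, the leftmost letter of every $p_i$ with $1 \leq i \leq k-1$ equals $a_1$. In particular, $p_{k-1}$ is the single letter $a_1$, which forces $a_k = a_1^{-1}$.

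Setting $x_1 := a_1$ and $v_1 := a_2 \cdots a_{k-1}$, the identity $p_{k-1} = a_1$ gives $[v_1]_{{\mathsf{FG}}(X)} = a_1^{-1} p_{k-1} = 1$, and by construction $u$ begins with the block $x_1 v_1 x_1^{-1}$. The suffix $a_{k+1} \cdots a_m$ is strictly shorter than $u$ (since $k \geq 2$) and its value in ${\mathsf{FG}}(X)$ equals $p_k^{-1} \cdot p_m = 1$, so the induction hypothesis supplies the desired factorization of the suffix, which concatenated with $x_1 v_1 x_1^{-1}$ yields the factorization of $u$.

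I expect no real obstacle here; the only mildly delicate point is justifying the invariance of the leftmost letter of $p_i$ for $1 \leq i \leq k-1$, which is an elementary fact about the free-group reduction process and may be phrased either via the length argument above or by a direct induction on $i$.
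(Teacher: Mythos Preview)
Your argument is correct. Both your proof and the paper's use induction on $|u|$ to strip off a leading block $x_1 v_1 x_1^{-1}$, but they locate the matching letter $x_1^{-1}$ by different mechanisms. The paper fixes some reduction sequence $u=u_0,\dots,u_k=1$ and lets $x_1^{-1}$ be the letter that eventually sits adjacent to the first letter $x$ of $u$ at the step where that pair is deleted; you instead track the reduced prefixes $p_i$ and take $k$ to be the first return to the identity, using the invariance of the leftmost letter of $p_i$ while $p_i\neq 1$ to conclude $a_k=a_1^{-1}$. Your route is arguably cleaner in that it avoids the non-canonical choice of a reduction sequence and yields a canonical (minimal) first block, whereas the paper's decomposition depends on the chosen sequence; on the other hand the paper's pairing viewpoint is conceptually transparent and directly mirrors the familiar parenthesis-matching picture. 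Either device gives the same inductive structure and the same conclusion.
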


\begin{proof}
Suppose that $u\neq 1$. By assumption, there is a sequence $u=u_0, u_1,\dots, u_k = 1$ where $k\geq 1$ and $u_i\in (X\cup X^{-1})^*$, $1\leq i\leq k$, are such that every $u_{i+1}$ is obtained from $u_i$ by removing from it a factor of the form $xx^{-1}$, where $x\in X\cup X^{-1}$. Let $x\in X\cup X^{-1}$ be the first letter of $u$. Then $u$ has the letter $x^{-1}$, at some position $l$, and there is  $i\in \{0,\dots, k-1\}$ such that the first letter $x$ of $u$ and the $l$-th letter $x^{-1}$ of $u$ were not removed during the passage from $u$ to $u_i$, appear in $u_i$ as the factor $xx^{-1}$ which is removed at the passage to $u_{i+1}$. This means that we can write $u=xv_1x^{-1}u'$ where $v_1\in (X\cup X^{-1})^*$ and $[v_1]_{{\mathsf{FG}}(X)}=1$. But then $[xv_1x^{-1}]_{{\mathsf{FG}}(X)}=1$, so that $[u']_{{\mathsf{FG}}(X)} = 1$ as well. The statement now follows applying induction.
\end{proof}
   
\begin{proof}[Proof of Proposition \ref{prop:e_unitary}.] Let $Y=X\cup \overline{X}$. We consider $G_p=I_p/\sigma$ as a $Y$-generated group that is canonically isomorphic to the $Y$-generated group $({\mathsf{FG}}(X),Y)$.
From \eqref{eq:a22k} and Lemma \ref{lem:generation2a}(2)
we have that
$G_p= {\mathrm{Gr}}\langle X\cup \overline{X} \mathrel{\vert} \overline{x} = x, x\in X \rangle.$

Let $u\in (Y\cup Y^{-1})^*$ be such that $[u]_{G_p} = 1$. We show that $[u]_{I_p}$ is an idempotent. If $u=1$, this is clear, so we assume that $|u|\geq 1$. The map $Y\to Y$, given by $x, \overline{x}\mapsto x$, extends to a morphism  $f\colon (Y\cup Y^{-1})^* \to (X\cup X^{-1})^*$ and we have $[u]_{G_p} = [f(u)]_{G_p}$. Hence $f(u) \in (X\cup X^{-1})^*$ and $[f(u)]_{{\mathsf{FG}}(X)}=1$. In view of Lemma \ref{lem:a23a}, we can write
$$
u=x_1 v_1 z_1^{-1} x_2 v_2 z_2^{-1} \cdots x_n v_n z_n^{-1},
$$
for some $n\geq 1$ where for all $i\in \{1,\dots, n\}$ we have  that $x_i,z_i\in Y\cup Y^{-1}$ are such that $f(x_i)=f(z_i)$ and $v_i\in (Y\cup Y^{-1})^*$ satisfy $[v_i]_{G_p}=1$. Arguing by induction on the length of $u$, we can assume that $[v_i]_{I_p}$ are idempotents for all $i$. Since idempotents are closed with respect to the multiplication, it suffices to prove that each $[x_iv_iz_i^{-1}]_{I_p}$ is an idempotent. 

If $x_i = z_i$ this follows from the fact that in an inverse semigroup a conjugate $aea^{-1}$ of an idempotent $e$ is itself an idempotent. 

If $x_i = x\in X$ and $z_i = \overline{x}$, then
$[x_iv_iz_i^{-1}]_{I_p} = [xv_i \overline{x}^{-1}]_{I_p} = [x^+ \overline{x}v_i \overline{x}^{-1}]_{I_p} = [x^+]_{I_p}[\overline{x}v_i \overline{x}^{-1}]_{I_p}$, which is an idempotent. 

If $x_i=x^{-1}\in X^{-1}$ and $z_i = \overline{x}^{-1}$, then 
$[x_iv_iz_i^{-1}]_{I_p} = [x^{-1}v_i \overline{x}]_{I_p} = [x^* \overline{x}^{-1}v_i \overline{x}]_{I_p} = [x^*]_{I_p}[\overline{x}^{-1}v_i \overline{x}]_{I_p}$, which is also an idempotent.

The remaining two cases follow by symmetry.
\end{proof}

\begin{remark}
One can prove that in fact $I_p$ is $F$-inverse. If  $g = a_1^{\varepsilon_1}\cdots a_n^{\varepsilon_n}$, $a_i\in X$, $\varepsilon_i\in \{1,-1\}$, is a reduced word then the maximum element of the $\sigma$-class which projects down to $[g]_{{\mathsf{FG}}(X)}$ is $[\overline{a_1}^{\,\varepsilon_1}\cdots \overline{a_n}^{\,\varepsilon_n}]_{I_p}$. We remark, however, that $I_p$ is not perfect $F$-inverse, since, for example, $\overline{a}\,\overline{a}^{-1} \neq 1$ for any $a\in X$ (this follows from the description of the model of $I_p$, which is given below).
\end{remark}

\begin{theorem}\label{th:geom_p}
$F_p$ and ${\mathcal F}_p$ admit  geometric models.
\end{theorem}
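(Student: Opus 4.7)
The plan is to apply Proposition \ref{prop:model}(2) to both monoids. By Lemma \ref{lem:a23b}, $F_p$ and ${\mathcal F}_p$ are $(X\cup\overline{X})$-canonically $(2,1,1,0)$-isomorphic to each other and to ${\mathsf{FFBR}}_p(X)$, hence each is $F$-birestriction and therefore proper. Proposition \ref{prop:model}(2) then reduces the existence of a geometric model to two hypotheses about the associated universal inverse monoid: that it is $E$-unitary, and that the induced canonical morphism on $\sigma$-quotients is injective.

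First I would treat $F_p$ with its $(X\cup\overline{X})$-generating set, whose universal inverse monoid is $I_p$ from \eqref{eq:a22k}. The $E$-unitary property of $I_p$ is precisely the content of Proposition \ref{prop:e_unitary} which has just been established, so the first hypothesis is immediate.

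To verify injectivity of $\psi\colon F_p/\sigma \to I_p/\sigma$, I would combine Lemma \ref{lem:generation12a} with Lemma \ref{lem:a23b} to identify $F_p/\sigma$ with $X^*$, and read off from the presentation $G_p = {\mathrm{Gr}}\langle X\cup\overline{X}\mathrel{\vert} \overline{x}=x,\,x\in X\rangle$ noted in the proof of Proposition \ref{prop:e_unitary} that $I_p/\sigma \simeq G_p \simeq {\mathsf{FG}}(X)$ via $x,\overline{x}\mapsto x$. Under these identifications, the canonical morphism $\psi$ is the natural embedding $X^* \hookrightarrow {\mathsf{FG}}(X)$ of the free monoid into the free group, which is plainly injective. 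Proposition \ref{prop:model}(2) then provides a geometric model for $F_p$. The statement for ${\mathcal F}_p$ (with its $(X\cup\overline{X^+})$-generating set) follows either by repeating the argument with ${\mathcal I}_p$ in place of $I_p$, using the $(X\cup\overline{X^+})$-canonical isomorphism ${\mathcal I}_p\cong I_p$ noted just above \eqref{eq:a22k} together with the corresponding isomorphism of the $\sigma$-quotients $X^*\to {\mathsf{FG}}(X)$, or directly from the canonical isomorphism ${\mathcal F}_p\cong F_p$ of Lemma \ref{lem:a23b}.

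There is essentially no obstacle to overcome: the substantive work was done in Lemma \ref{lem:a23b} (collapsing $\overline{X^+}$ to $\overline{X}$ thanks to the identity $\overline{uv}=\overline{u}\,\overline{v}$) and in Proposition \ref{prop:e_unitary} (the $E$-unitary property of $I_p$, proved via the decomposition of \mbox{$\sigma$-trivial} words from Lemma \ref{lem:a23a}). The present theorem is the bookkeeping corollary that packages those results into the framework of Proposition \ref{prop:model}.
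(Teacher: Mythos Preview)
Your proposal is correct and follows essentially the same approach as the paper's proof: both invoke Proposition~\ref{prop:model}(2), use properness of $F_p$ (as an $F$-birestriction monoid), take the $E$-unitarity of $I_p$ from Proposition~\ref{prop:e_unitary}, and verify injectivity of $\psi$ on $\sigma$-quotients by identifying it with the embedding $X^*\hookrightarrow{\mathsf{FG}}(X)$. Your write-up is slightly more detailed in citing the supporting lemmas for the identifications of the $\sigma$-quotients, but the argument is the same.
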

    
\begin{proof}
By Proposition \ref{prop:e_unitary} we have that $I_p$ is $E$-unitary. Moreover,
$F_p/\sigma$ is isomorphic to the free monoid $X^*$ and it easily follows that $I_p/\sigma$
is isomorphic to the free group ${\mathsf{FG}}(X)$. Hence
the canonical morphism $\psi\colon F_p/\sigma \to I_p/\sigma$ is injective. Since, in addition $F_p$ is proper, Proposition \ref{prop:model} implies that $F_p$ admits a geometric model. For ${\mathcal F}_p$ the arguments are similar.
\end{proof}

Theorem \ref{th:geom_p} implies that ${\mathsf{FFBR}}_p(X)$ can be $(X\cup \overline{X})$-canonically $(2,1,1,0)$-embedded into a quotient of $M({\mathsf{FG}}(X), X\cup \overline{X})$ and $(X\cup \overline{X^+})$-canonically $(2,1,1,0)$-embedded into a quotient of $M({\mathsf{FG}}(X), X\cup \overline{X^+})$ constructed via dual-closure operators described in Subsection \ref{subs:e_unitary}.

We now describe the geometric model for $F_p$ based on $\Cay({\mathsf{FG}}(X), X\cup \overline{X})$. Recall that the assignment map {$X\cup \overline{X} \to {\mathsf{FG}}(X)$} is given by $x,\overline{x} \mapsto x$ for all $x\in X$. Similarly as in Remark \ref{rem:isom1a} one can show that $\psi(F_p)$ is isomorphic to $ E(I_p) \rtimes_{\bar\varphi} X^*$ where $\bar{\varphi}$ is the underlying premorphism of $I_p$.
The results of Subsection \ref{subs:e_unitary} imply that
$E(I_p)\simeq \bar{\rho}({\mathcal{X}}_{X\cup \overline{X}})$,
where ${\mathcal{X}}_{X\cup \overline{X}}$ is the semilattice of finite and connected subgraphs  of the Cayley graph $\Cay({\mathsf{FG}}(X), X\cup \overline{X})$ which contain the origin and $\rho$ is the congruence on $M({\mathsf{FG}}(X), X\cup \overline{X})$, such that $M({\mathsf{FG}}(X), X\cup \overline{X})/\rho \simeq I_p$. It follows that $F_p$ and thus also ${\mathsf{FFBR}}_p(X)$ is  $(X\cup \overline{X})$-canonically $(2,1,1,0)$-isomorphic and $X$-canonically $(2,1,1,1,0)$-isomorphic to the partial action product
\begin{equation}\label{eq:l18}
\bar{\rho}({\mathcal{X}}_{X\cup \overline{X}}) \rtimes X^*,
\end{equation}
where the partial action of $X^*$ on $\bar{\rho}({\mathcal{X}}_{X\cup \overline{X}})$ is by left translations (defined in Subsection \ref{subs:e_unitary}).
To describe the semilattice $\bar{\rho}({\mathcal{X}}_{X\cup \overline{X}})$, we  observe that the vertices of  $\Cay({\mathsf{FG}}(X), X\cup \overline{X})$ 
coincide with those of $\Cay({\mathsf{FG}}(X), X)$, and to each edge $(\alpha, x,\beta)$ of $\Cay({\mathsf{FG}}(X), X)$ correspond two  edges $(\alpha, x,\beta)$ and $(\alpha,\overline{x}, \beta)$ of $\Cay({\mathsf{FG}}(X), X\cup \overline{X})$. The congruence $\rho$ on $M({\mathsf{FG}}(X), X\cup \overline{X})$ is generated by the defining relations $\overline{x}\geq x$, $x\in X$, of $I_p$. A graph $\Gamma \in {\mathcal X}^c_{X\cup \overline{X}}$ is thus $\rho$-closed if it satisfies the condition
\begin{equation}\label{eq:condition}
\text{ if } (\alpha,x,\beta) \,\text{ is an edge of } \, \Gamma, \, \text{ then so is } \, (\alpha,\overline{x},\beta).
\end{equation}
It follows that the closure $\bar{\rho}(\Gamma)$ of $\Gamma\in {\mathcal X}^c_{X\cup \overline{X}}$ is the graph obtained from $\Gamma$ by adding to it all the missing `twin copies' $(\alpha,\overline{x},\beta)$ of all its edges $(\alpha,x,\beta)$ labeled by $X$ (and their inverse negative edges). (Remark, however, that a $\rho$-closed graph may contain an edge $(\alpha, \overline{x},\beta)$ but no edge $(\alpha, x,\beta)$.) In particular, $\bar{\rho}(\Gamma)$ has the same vertices as $\Gamma$ and if $\Gamma$ is finite then so is $\bar{\rho}(\Gamma)$. It follows that $\bar{\rho}({\mathcal{X}}_{X\cup \overline{X}})$ is the semilattice of all finite and connected subgraphs of $\Cay({\mathsf{FG}}(X), X\cup \overline{X})$ which contain the origin and satisfy condition \eqref{eq:condition}.

\begin{corollary}\label{cor:word_p}
The word problem for ${\mathsf{FFBR}_{p}}(X)$ is decidable.
\end{corollary}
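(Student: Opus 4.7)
The plan is to use the geometric model for ${\mathsf{FFBR}}_p(X)$ from Theorem~\ref{th:geom_p} and~\eqref{eq:l18}, which identifies ${\mathsf{FFBR}}_p(X)$ $X$-canonically with the partial action product
$$
\bar{\rho}({\mathcal X}_{X \cup \overline X}) \rtimes X^*,
$$
whose elements are pairs $(\Gamma, u)$ with $u \in X^*$ and $\Gamma$ a finite, connected, $\rho$-closed subgraph of $\Cay({\mathsf{FG}}(X), X \cup \overline X)$ containing both $1$ and $u$ as vertices. The algorithm will be: given a term $W$ in the signature $(\cdot,\,^*,\,^+,\,\mx{},\,1)$ over $X$, recursively compute the corresponding pair $(\Gamma_W, u_W)$; two terms $W_1, W_2$ then represent the same element of ${\mathsf{FFBR}}_p(X)$ if and only if $\Gamma_{W_1} = \Gamma_{W_2}$ as graphs and $u_{W_1} = u_{W_2}$ as words in $X^*$, both of which are manifestly decidable.

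Specialising the formulas from Subsection~\ref{subs:proper_structure} to our partial action by left translations yields the explicit rules
$$
(\Gamma, u)(\Delta, v) = (\Gamma \cup u\Delta,\, uv), \quad (\Gamma, u)^+ = (\Gamma, 1), \quad (\Gamma, u)^* = (u^{-1}\Gamma,\, 1),
$$
together with $\mx{(\Gamma, u)} = (\Gamma_{\overline u},\, u)$, where $\Gamma_{\overline u}$ is the finite linear subgraph tracing the word $\overline{x_1}\cdots \overline{x_n}$ from the origin when $u = x_1 \cdots x_n \in X^+$ (and $\Gamma_{\overline 1} = \{1\}$). On generators, each $x \in X$ is assigned $(\bar\rho(\Gamma_x), x)$, where $\Gamma_x$ is the two-vertex graph with the single positive edge $(1, x, [x]_{{\mathsf{FG}}(X)})$. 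Each of these operations preserves finiteness, connectedness and the $\rho$-closed property: unions and left translations of $\rho$-closed graphs are $\rho$-closed since condition~\eqref{eq:condition} is preserved by the automorphisms of $\Cay({\mathsf{FG}}(X), X\cup \overline X)$ induced by left multiplication; and the closure $\bar\rho$ of a given finite graph is computed, by the description following~\eqref{eq:condition}, by traversing its finite edge set and adjoining, for every positive $X$-edge $(\alpha, x, \beta)$ whose twin $(\alpha, \overline x, \beta)$ is missing, exactly that twin edge (no new vertices are ever introduced).

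Combining these observations, every term $W$ is evaluated in finitely many steps to an explicit normal form $(\Gamma_W, u_W)$ consisting of a finite graph and a word in $X^*$, which makes the equality test effective. I do not anticipate a substantive obstacle: the perfect relation $\overline{uv} = \overline u\,\overline v$ is already encoded in the multiplication rule of the partial action product, so none of the complications of the Schützenberger $P$-expansions alluded to at the end of Section~\ref{sec:word_rel} (in particular, the need to introduce new vertices when expanding along $\overline{uv} = \overline u\,\overline v$) arise here, and the only mild verification required is that the four rules above really do correspond, under the isomorphism of Theorem~\ref{th:geom_p}, to the operations of ${\mathsf{FFBR}}_p(X)$.
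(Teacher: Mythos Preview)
Your proposal is correct and takes essentially the same approach as the paper: both use the isomorphism of ${\mathsf{FFBR}}_p(X)$ with $\bar{\rho}({\mathcal X}_{X\cup\overline X})\rtimes X^*$ from~\eqref{eq:l18} and the fact that the closure $\bar\rho$ of a finite graph is effectively computable (adding only finitely many twin $\overline x$-edges and no new vertices), so that equality of pairs is decidable. Your write-up is more explicit than the paper's, spelling out the recursive evaluation of terms via the operations on pairs, but the underlying argument is the same.
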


\begin{proof}
We use the $X$-canonical $(2,1,1,1,0)$-isomorphism between ${\mathsf{FFBR}_{p}}(X)$ and the partial action product of \eqref{eq:l18}.
Two elements $(\bar{\rho}(\Gamma_1), u), (\bar{\rho}(\Gamma_2), v) \in \bar{\rho}({\mathcal{X}}_{X\cup \overline{X}}) \rtimes X^*$ 
are equal if and only if $u=v$ and $\bar{\rho}(\Gamma_1)= \bar{\rho}(\Gamma_2)$.
Since, for $\Gamma\in {\mathcal X}_{X\cup \overline{X}}$, $\bar{\rho}(\Gamma)$ is obtained from $\Gamma$ by adding to it finitely many edges (and no vertices), it is decidable if $\bar{\rho}(\Gamma_1)= \bar{\rho}(\Gamma_2)$.
\end{proof}

\section*{Acknowledgments}
We thank the referee for the comments and suggestions, which helped us improve the presentation.

\section*{Statements and declarations}

The authors have no competing interests to declare that are relevant to the content of this article.

\end{document}